\documentclass[submission,copyright,creativecommons]{eptcs}
\providecommand{\event}{AiML 2026}

\usepackage{iftex}
\usepackage{stmaryrd}
\newcommand{\noopsort}[1]{}

\ifpdf
  \usepackage{underscore}         
  \usepackage[T1]{fontenc}        
\else
  \usepackage{breakurl}           
\fi

\usepackage{graphicx} 
\usepackage{amsmath}
\usepackage{amsfonts}
\usepackage{yfonts}
\usepackage{amsthm}
\usepackage{amssymb}
\usepackage{stmaryrd}
\usepackage{proof}
\usepackage{mathtools}
\usepackage{modalops}
\usepackage{bussproofs-extra}
\usepackage{mathrsfs}
\usepackage{multicol}
\usepackage{subcaption}
\usepackage{makecell}

\usepackage{xargs} 

\usepackage{aiml26}

\title{Ultrafilter Extensions for Veltman Semantics}
\author{Fèlix {Frigola González}
\institute{University of Barcelona}
\and
Joost J.~Joosten
\institute{University of Barcelona}
\and Vicent {Navarro Arroyo}
\institute{Polytechnic University of València}
\institute{University of Barcelona}
\and Cosimo {Perini Brogi}
\institute{IMT School for Advanced Studies Lucca}
}

\newcommand{\titlerunning}{Ultrafilter Extensions for Veltman Semantics}
\newcommand{\authorrunning}{Frigola González, Joosten, Navarro Arroyo, and Perini Brogi}
\newcommand{\copyrightholders}{Frigola González, Joosten,\\ Navarro Arroyo, and Perini Brogi}
\hypersetup{
  bookmarksnumbered,
  pdftitle    = {\titlerunning},
  pdfauthor   = {\authorrunning},
  pdfsubject  = {EPTCS},               
  pdfkeywords = {keyword1, keyword2} 
}

\begin{document}
\maketitle
\begin{abstract}
In this paper, we present a first-order frame condition for interpretability logic and show that the condition is not modally definable. Yet, the frame condition holds both on ILM and on ILP frames and, hence, is of potential importance for the long-standing open problem about the interpretability logic of all reasonable arithmetical theories. In the light of the Goldblatt-Thomason Theorem, the modally inexpressible frame condition serves as motivation to develop ultrafilter extensions for interpretability logic. We develop the necessary algebraic tools to define these ultrafilter extensions and prove the main properties about both the tools and the ultrafilter extensions.
\end{abstract} 

\section{Introduction}
Building upon early work by Montagna and \v{S}vejdar \cite{svejdar1983, montagna1987}, interpretability logics were introduced by Visser in the 1980s as a natural extension of provability logics (\cite{Visser:1990:InterpretabilityLogic}). Since then, the field of modal interpretability logics has seen a steady process of becoming a mature discipline. A Kripke style semantics for interpretability logics goes by the name of Veltman Semantics. After some early modal completeness and decidability results in \cite{JonghVeltman:1990:ProvabilityLogicsForRelativeInterpretability} it was seen that many interpretability logics satisfy various other good properties too, like the Finite Model Property, the Fixpoint Theorem, or Interpolation (see \cite{Visser:1997:OverviewIL,JonghJaparidze:1998:HandbookPTProvabilityLogic}). 

Proof calculi are mostly Hilbert style but sequent style systems and cut-elimination have been studied in \cite{HakoniemiJoosten:2016:TableauxForInterpretabilityLogics, Sasaki:2002:CutFreeIL, DBLP:journals/rsl/BrogiNO25}. Topological semantics for interpretability logics have been studied in \cite{Iwata:2021:TopologicalSemantics}. Verbrugge semantics (\cite{JoostenMasMikecVukovic:2024:OverviewVerbrugge}) is a generalization of Veltman semantics that is reminiscent of neighbourhood semantics. However, plain Veltman semantics is still the predominant semantics used for interpretability logics. 


Unary normal modal logics are all extensions of the basic modal logic \logicK. The basic modal interpretability logic is called \il although on occasion logics below \il are studied too (\cite{Kurahashi:2021:ModalCompleteness,Okawa:2024:UnaryInterpretabilityLogics}). Various extensions of \il by axiom schemes induce correspondences to first- or higher-order properties of frames. 

To the best of our knowledge, in this paper -- Section \ref{section:NonDefinability}-- we present the first frame condition (which is first-order) for interpretability logics with a proof that the condition is not modally definable. Moreover, the frame condition is natural in light of the semantically defined interpretability logics 
$\ilal$ 
and $\ilalfrak$ 
(see \cite{JoostenVisser:2000:IntLogicAll, Joosten:2015:TwoSeries, NavarroJoosten:2025:tamesemantics}).

Our proof of modal undefinability is rather ad-hoc and this begs the question if a Goldblatt-Thomason Theorem for interpretability logic can be proven to deal with undefinability results in a more uniform way. In the remainder of the paper we develop a first natural step toward such a Goldblatt-Thomason theorem.  In Section \ref{section:ultrafilterExtensions} we define ultrafilter extensions and prove some elementary properties of them. More sophisticated arguments require that we develop labeling techniques in the algebraic setting which is done in Section \ref{section:fullLabelsAlgebras}.
The main results on ultrafilter extensions are presented in Section \ref{section:mainResults}: elementary equivalence and modal saturation. Further steps towards the Goldblatt-Thomason Theorem will be addressed in future work. We start with some preliminaries in the next section. 

\section{Preliminaries}

Following \cite{Visser:1988:preliminaryNotesOnInterpretabilityLogic,Visser:1990:InterpretabilityLogic}, throughout this paper, we will work with a multimodal language \lang, whose formulas ($\alpha,\beta,\ldots\in\form$) are built on top of a given denumerably infinite set of propositional atoms ($p,q,\ldots\in\atom$) according to the following grammar:
    $\alpha,\beta\in\form\Coloneqq p \mid \bot \mid 
    (\alpha\to\beta) \mid \nec\alpha \mid \alpha\trir\beta$. Working on this grammar, we introduce the following notations:
\begin{center}  
    \begin{tabular}{l l l}
        $\top\coloneqq\bot\to\bot$ & 
        $\neg\alpha\coloneqq\alpha\to\bot$ &
        $ \alpha\wedge\beta\coloneqq\neg(\alpha\to\neg\beta)$ \\ $\alpha\vee\beta\coloneqq(\neg\alpha\to\beta)$ &
        $\alpha\leftrightarrow\beta\coloneqq(\alpha\to\beta)\wedge(\beta\to\alpha)$ &
        $\pos\alpha\coloneqq\neg\nec\neg\alpha$ \\
    \end{tabular}
\end{center}
Furthermore, we adopt the convention of omitting brackets on formulas to enhance their readability, according to the following decreasing priority list: $[\neg,\nec,\pos$; $\wedge,\vee;\trir;\to,\leftrightarrow ]$.\footnote{According to this notation convention, we can thus unambiguously write, e.g., $\alpha\trir\beta \to\alpha \wedge \nec\gamma \trir \beta \wedge \nec\gamma$ for $(\alpha\trir\beta) \to ((\alpha\wedge (\nec\gamma)) \trir (\beta\wedge (\nec\gamma)))$.}

\subsection{Axiomatic Calculus and Relational Semantics for \il}

\begin{definition}[Axiomatization of \il]\label{def:calc}
    The axiomatic calculus \il for the basic interpretability logic extends classical propositional logic by the following axiom schemas and rules:
    \begin{multicols}{2}
    \begin{itemize}
        \item \K: \kripke;
        \item  \GL: \lob;
        \item \play{1}: \interone;
        \item \play{2}: \intertwo;
        \item \play{3}: \interthree;        
        \item \play{4}: \interfour;        
        \item \play{5}: \interfive;
        \item The modus ponens rule: \AxiomC{$\alpha\to\beta$}\AxiomC{$\alpha$}\BinaryInfC{$\beta$}\DisplayProof;
        \item The necessitation rule: \AxiomC{$\alpha$}\UnaryInfC{$\nec\alpha$}\DisplayProof.
    \end{itemize}
    \end{multicols}
    We write $\il\vdash\alpha$ when the formula $\alpha$ is provable in the axiomatic calculus for \il; similarly $\Gamma\vdash_\il\alpha$ denotes provability in \il of $\alpha$ from the \textit{finite} set of hypotheses $\Gamma$, with the usual restriction on the application of the necessitation rule.\footnote{Notice that $\il\vdash\nec\alpha\to\nec\nec\alpha$ and $\il\vdash\nec\alpha\leftrightarrow\neg\alpha\trir\bot$. An equivalent axiomatization of \il can then be given over a restricted language $\mathcal{L}_{\trir}$~\cite{Visser:1990:InterpretabilityLogic}.}
\end{definition}

The system \il has been extended along several directions, to cover different versions of formal interpretability between arithmetical theories: for an in-depth treatment, we refer the reader to, e.g.,~\cite{Shavrukov:1988:InterpretabilityLogicPA,Berarducci:1990:InterpretabilityLogicPA,Visser:1990:InterpretabilityLogic,GorisJoosten:2020:TwoSeries}. On the relational semantic side, \il has a natural presentation in terms of Veltman frames~\cite{JonghVeltman:1990:ProvabilityLogicsForRelativeInterpretability}.

\begin{definition}[Veltman frames and models]\label{def:veltman}
    A \emph{Veltman frame} is a triple $\veltframe:=\la W, R, \{S_w \mid w\in W\}\ra$ where $W$ is a non-empty 
set of \emph{possible worlds}, $R$ a binary relation on $W$ which is transitive and conversely well-founded. The $\{S_w \mid w\in W\}$ is a collection of binary relations on $R[w]$ -- where $R[w]:= \{ v\mid wRv \}$ -- so that each $S_w$ is reflexive and transitive and the restriction of $R$ to $R[w]$ is contained in $S_w$, that is: $R\cap R[w] \subseteq S_w$.

A \emph{Veltman model} consists of a Veltman frame together with a valuation function $\eval: \atom\longrightarrow\powerset{W}$ that assigns to each propositional atom $p\in\atom$ a set of worlds $\eval(p)\subseteq W$ where $p$ is stipulated to be true.

This valuation function naturally extends to a forcing relation $\Vdash\,\subseteq\,W\times\form$ as follows:

\(
\begin{array}{lll}
w\Vdash p & \iff & w\in \eval(p); \ \mbox{ and }   w\Vdash \bot \text{for no}\,w\in W;\\
w\Vdash \alpha\to \beta & \iff & w\nVdash  \alpha \text{ or } w\Vdash \beta;\\
w\Vdash \nec \alpha & \iff & \forall v\ .(\text{if } wRv, \text{ then } v\Vdash  \alpha);\\
w\Vdash \alpha\trir \beta & \iff & \forall u\  .\Big(\text{if } wRu \text{ and } \ u\Vdash \alpha, \text{ then } 
\exists v \ (uS_w v \ \text{ and } \ v\Vdash  \beta)\Big).
\end{array}
\)

Given a Veltman model $\mathcal{M} = \la W, R, \{S_w \mid w\in W\},\eval\ra$ and a formula $\alpha\in\form$, we shall write $\lb \alpha\rb_{\mathcal{M}}$  to denote $\{x\in \mathcal M \mid \mathcal M , x \Vdash \alpha\}$ and,
$\mathcal{M} \models\alpha$ as a notation for $\forall \, w \in W.\, ( w \Vdash\alpha)$. 
\end{definition}

In their~\cite{JonghVeltman:1990:ProvabilityLogicsForRelativeInterpretability}, De Jongh and Veltman proved that the calculus \il is sound and complete with respect to all Veltman models. Modal adequacy w.r.t.~Veltman frames satisfying different conditions  has been established via various techniques in subsequent work on several axiomatic extensions~\cite{JonghVeltman:1990:ProvabilityLogicsForRelativeInterpretability,JonghVeltman:1999:ILW,Joosten:1998:MasterThesis,GorisJoosten:2011:ModalMatters,BilkvaGorisJoosten:2004:SmartLabels,GorisBilkovaJoostenMikec:2020:ArXivLabels}. 

\begin{definition}[Frame definability]\label{def:frameDef}
Let $\mathcal{C}$ denote a first- or higher-order formula. Let us write $\mathcal{F}\models\mathcal{C}$ for stating that the Veltman frame $\mathcal{F}$ as first- or higher-order structure satisfies the condition encoded by $\mathcal{C}$, i.e. it makes $\mathcal{C}$ true. Equivalently, we will write $\mathcal{F}\in\mathcal{C}$ for the class of frames $\Frame F$ such that $\Frame F\models\mathcal{C}$. The class of frames $\mathcal{C}$ is \textbf{definable} if
    $\exists\, \varphi\in \form.\,\forall \mathcal{F}.\,(\mathcal{F}{\in}\mathcal{C} {\iff} \mathcal{F}\vDash\varphi).$
\end{definition}

\subsection{Labelling Techniques for Interpretability Logics}\label{sec:labelling}

In the canonical model for interpretability logic, maximal consistent sets may need copies if they fulfill various functionalities. To keep track of the copies and their functionality in the canonical model, one typically uses labels. In the early times the labels were formulas to flag so-called critical successors (see~\cite{JonghVeltman:1990:ProvabilityLogicsForRelativeInterpretability}). Later the labeling techniques were generalized giving rise to so-called \emph{assuring labels} (\cite{GorisBilkovaJoostenMikec:2022:JournalLabels}).



\begin{definition}[$S$-assuring successor]\label{def:assuringLabel} Given sets of formulas $\Gamma,\Delta,S$, we say that $\Delta$ is an $S$-assuring successor of $\Gamma$ (denoted $\Gamma\prec_S\Delta$) if, for any formula $\alpha\in\form$ and any finite subset of the label $S'\subseteq S$, the condition that $\neg\alpha$ interprets the disjunction of the negations of formulas in $S'$ implies that $\alpha$ and $\nec\alpha$ are in $\Delta$:
    $$\Gamma\prec_S\Delta \iff \forall\alpha\in\form,\,S'\underset{\textit{fin}}{\subseteq}S.\,\Big((\neg\alpha\trir\bigvee_{\beta\in S'}\neg\beta)\in\Gamma\implies\alpha,\nec\alpha\in\Delta\Big).$$
If that is the case, $S$ is a \textbf{label} (for $\Gamma$ and $\Delta$).
\end{definition}
For our aims, it suffices to recall some basic labelling lemmas for generic assuring labels.
For the next lemmas, we assume that $\Gamma$ and $\Delta$ are maximal consistent (with respect to some logic  extending \il) sets of formulas. The following lemma is easy to check.
\begin{lemma}[From~\cite{GorisBilkovaJoostenMikec:2020:ArXivLabels}, Lemma 4.2] The assuring relation $\prec_S$ satisfies the following properties:
    \begin{enumerate}
        \item Restriction: If $S\subseteq T$ and $\Gamma\prec_T\Delta$, then $\Gamma\prec_S\Delta$.
        \item Propagation: If $\Gamma\prec_S\Delta$ and $\Delta\prec_\varnothing\Theta$, then $\Gamma\prec_S\Theta$.
    \end{enumerate}
\end{lemma}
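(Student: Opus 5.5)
Both items follow directly from Definition~\ref{def:assuringLabel}, and I will handle them separately.

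For \emph{Restriction}, let $S\subseteq T$ and $\Gamma\prec_T\Delta$. Take any $\alpha$ and any finite $S'\subseteq S$ with $(\neg\alpha\trir\bigvee_{\beta\in S'}\neg\beta)\in\Gamma$. Since $S'\subseteq S\subseteq T$, the set $S'$ is also a finite subset of $T$. The hypothesis $\Gamma\prec_T\Delta$ then gives $\alpha,\nec\alpha\in\Delta$, which is exactly the condition for $\Gamma\prec_S\Delta$. No logical reasoning is needed here.

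For \emph{Propagation}, assume $\Gamma\prec_S\Delta$ and $\Delta\prec_\varnothing\Theta$. Fix $\alpha$ and a finite $S'\subseteq S$ with $(\neg\alpha\trir\bigvee_{\beta\in S'}\neg\beta)\in\Gamma$; we must show $\alpha,\nec\alpha\in\Theta$. From $\Gamma\prec_S\Delta$ we get $\alpha\in\Delta$ and $\nec\alpha\in\Delta$. It therefore suffices to show that $\nec\gamma\in\Delta$ implies $\gamma,\nec\gamma\in\Theta$, and then apply this to $\gamma=\alpha$.

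To obtain this from $\Delta\prec_\varnothing\Theta$, note that the only finite subset of $\varnothing$ is $\varnothing$ itself, and the empty disjunction is $\bot$. So $\Delta\prec_\varnothing\Theta$ says: whenever $(\neg\gamma\trir\bot)\in\Delta$, we have $\gamma,\nec\gamma\in\Theta$. By the footnote to Definition~\ref{def:calc}, $\il\vdash\nec\gamma\leftrightarrow\neg\gamma\trir\bot$. Since $\Delta$ is maximal consistent for a logic extending \il, it is closed under this equivalence, so $\nec\gamma\in\Delta$ gives $(\neg\gamma\trir\bot)\in\Delta$. Hence $\gamma,\nec\gamma\in\Theta$, and with $\gamma=\alpha$ we conclude $\alpha,\nec\alpha\in\Theta$, that is, $\Gamma\prec_S\Theta$.

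The only potential obstacle is this conversion between $\nec\alpha$ and $\neg\alpha\trir\bot$, together with the convention that the empty disjunction denotes $\bot$. Both are supplied by the paper's footnote and the maximal consistency of $\Delta$, so I expect no real difficulty.
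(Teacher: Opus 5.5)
Your proof is correct. It is the direct unfolding of Definition~\ref{def:assuringLabel} that the paper has in mind when it calls the lemma easy to check (the paper gives no written proof): Restriction holds because finite subsets of $S$ are finite subsets of $T$, and Propagation reads $\Delta\prec_\varnothing\Theta$ as the ordinary successor condition, using $\il\vdash\nec\alpha\leftrightarrow\neg\alpha\trir\bot$, the empty disjunction being $\bot$, and the maximal consistency of $\Delta$.
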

The following lemma tells us that labels can be extended to theories. 
\begin{lemma}[From~\cite{GorisBilkovaJoostenMikec:2020:ArXivLabels}, Lemma 4.6] The assuring relation $\prec_S$ satisfies the following closure properties:
    \begin{enumerate}
        \item Deducibility: For any $\varphi\in\form$, if $\Gamma\prec_S\Delta$ and $S\vdash_\il\varphi$, then $\Gamma\prec_{S\cup\{\varphi\}}\Delta$.
        \item Necessitation: If $\Gamma\prec_S\Delta$, then $\Gamma\prec_{S\cup\nec S}\Delta$, where $\nec S:=\{\nec\varphi \mid \varphi\in S\}$.
    \end{enumerate}
\end{lemma}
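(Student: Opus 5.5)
Both items are proved by unfolding the definition of $\Gamma\prec_S\Delta$. In each case we take a finite $S'$ from the enlarged label and reduce the hypothesis $(\neg\alpha\trir\bigvee_{\beta\in S'}\neg\beta)\in\Gamma$ to one of the same form with a finite subset of $S$ alone. The tool is the monotonicity of $\trir$ in its right argument: if $\il\vdash\chi\to\chi'$, then $\il\vdash\nec(\chi\to\chi')$ by necessitation, hence $\chi\trir\chi'$ by \play{1}. Composing with $\neg\alpha\trir\chi$ via \play{2} gives $\il\vdash \neg\alpha\trir\chi\to\neg\alpha\trir\chi'$. Since $\Gamma$ is maximal consistent in a logic extending \il, it is closed under these implications.

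For Deducibility, let $S'\subseteq S\cup\{\varphi\}$ be finite with $(\neg\alpha\trir\bigvee_{\beta\in S'}\neg\beta)\in\Gamma$. Since $S\vdash_\il\varphi$ and derivations are finite, there is a finite $S_0\subseteq S$ with $\bigwedge S_0\vdash_\il\varphi$. The deduction theorem for $\vdash_\il$ (valid because of the restriction on necessitation) turns this into $\il\vdash\neg\varphi\to\bigvee_{\beta\in S_0}\neg\beta$. Put $S'':=(S'\setminus\{\varphi\})\cup S_0$, a finite subset of $S$. Then $\il\vdash\bigvee_{\beta\in S'}\neg\beta\to\bigvee_{\beta\in S''}\neg\beta$, so monotonicity gives $(\neg\alpha\trir\bigvee_{\beta\in S''}\neg\beta)\in\Gamma$. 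The hypothesis $\Gamma\prec_S\Delta$ then yields $\alpha,\nec\alpha\in\Delta$.

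For Necessitation, let $S'\subseteq S\cup\nec S$ be finite, and write it as $S_1\cup\nec S_2$ with $S_1,S_2\subseteq S$ finite. We have $\il\vdash\nec\beta\to\beta$ available only as a rule, not as an axiom, so the implication $\neg\nec\beta\to\neg\beta$ is not provable. Instead we use $\il\vdash\neg\nec\beta\trir\neg\beta$. This follows from \play{5}, $\pos\neg\beta\trir\neg\beta$, since $\neg\nec\beta$ is $\pos\neg\beta$ up to provable equivalence. We also use \play{3} to combine interpretations of disjuncts: from $\chi_i\trir\theta$ for all $i$ we get $\bigvee_i\chi_i\trir\theta$. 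Set $\theta:=\bigvee_{\beta\in S_1\cup S_2}\neg\beta$. Each $\neg\beta$ with $\beta\in S_1\cup S_2$ interprets $\theta$ by monotonicity (reflexivity of $\trir$ is an instance of \play{1}). Each $\neg\nec\beta$ with $\beta\in S_2$ interprets $\neg\beta$ by \play{5} and hence $\theta$ by \play{2}. Applying \play{3} gives $\il\vdash\bigvee_{\beta\in S'}\neg\beta\trir\theta$. Transitivity \play{2} with the hypothesis gives $(\neg\alpha\trir\theta)\in\Gamma$, where $S_1\cup S_2\subseteq S$ is finite, and the hypothesis $\Gamma\prec_S\Delta$ concludes.

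The main obstacle is the Necessitation step. Note that the definition requires both $\alpha$ and $\nec\alpha$ in $\Delta$; this is precisely what makes the stronger $\nec$-labels harmless. The route through \play{5} is what makes it work, since the naive route through a provable implication fails. The remaining work is bookkeeping: checking that \play{3} is applied to finitely many disjuncts by induction, and handling the degenerate case $S'=\varnothing$, where the disjunction is $\bot$.
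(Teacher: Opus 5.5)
Your proof is correct. The paper gives no proof of this cited lemma, but your route is the one it uses for the algebraic counterparts: monotonicity of $\trir$ in its right argument for deducibility (Lemma~\ref{labelextension}.\ref{labelextension1} and Lemma~\ref{lemma:filtergeneratedbyaFip}), and \play{5} ($\pos\alpha\trir\alpha$) together with combining disjuncts and \play{2} for necessitation (Lemma~\ref{labelextension}.\ref{labelextension2}). Two side remarks are slightly off but harmless: the non-theorem that blocks the naive route is $\beta\to\nec\beta$ (you wrote $\nec\beta\to\beta$), and your argument never actually uses the $\nec\alpha\in\Delta$ clause of the definition.
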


\begin{lemma}[From~\cite{GorisBilkovaJoostenMikec:2020:ArXivLabels}, Lemma 3.7]\label{lemma:succes} For any $S$, the following hold:
    \begin{itemize}
        \item If $\Gamma\prec_S\Delta$, then $S,\nec S\subseteq\Delta$.
        \item If $\Gamma\prec_S\Delta$, then $\pos S\subseteq\Gamma$, where $\pos S:=\{\pos\varphi \mid \varphi\in S\}$.
    \end{itemize}
    
\end{lemma}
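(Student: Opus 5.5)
Both items come from instantiating the definition of $\Gamma\prec_S\Delta$ (Definition~\ref{def:assuringLabel}) with well-chosen $\alpha$ and $S'$, and then using the maximal consistency of $\Gamma$ and $\Delta$ together with a few \il-theorems.

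\emph{First item.} Fix $\varphi\in S$, take $S'=\{\varphi\}$ and $\alpha:=\varphi$. The premise of the definition is then $\neg\varphi\trir\neg\varphi\in\Gamma$. This holds because $\neg\varphi\trir\neg\varphi$ is an \il-theorem. To see this, apply necessitation to $\neg\varphi\to\neg\varphi$ and then \play{1}, which gives $\nec(A\to B)\to A\trir B$. Since $\Gamma$ is maximal consistent, it contains every theorem. The definition now yields $\varphi\in\Delta$ and $\nec\varphi\in\Delta$. As $\varphi\in S$ was arbitrary, we get $S\subseteq\Delta$ and $\nec S\subseteq\Delta$.

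\emph{Second item.} Fix $\varphi\in S$ and suppose towards a contradiction that $\pos\varphi\notin\Gamma$. By maximality $\nec\neg\varphi\in\Gamma$. We apply the definition with $\alpha:=\bot$ and $S'=\{\varphi\}$. Its premise is $\neg\bot\trir\neg\varphi\in\Gamma$, i.e.\ $\top\trir\neg\varphi\in\Gamma$. To obtain this from $\nec\neg\varphi$:
\begin{itemize}
\item From $\nec\neg\varphi$ we get $\nec(\top\to\neg\varphi)$ by normality, using \K\ and necessitation.
\item \play{1} then gives $\top\trir\neg\varphi$.
\end{itemize}
Hence $\il\vdash\nec\neg\varphi\to\top\trir\neg\varphi$, and this formula lies in $\Gamma$. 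So the premise holds, and the definition gives $\bot\in\Delta$. This contradicts the consistency of $\Delta$. Therefore $\pos\varphi\in\Gamma$, and so $\pos S\subseteq\Gamma$.

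\emph{Main obstacle.} The only step needing care is checking that the relevant formulas, $\neg\varphi\trir\neg\varphi$ and $\nec\neg\varphi\to(\top\trir\neg\varphi)$, are derivable in \il from the schemas listed. Both should follow directly from \play{1} plus normality. This relies on \play{1} being the schema $\nec(A\to B)\to A\trir B$, which is its standard form in \il. Everything else is bookkeeping with maximal consistent sets.
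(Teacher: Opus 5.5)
Your proof is correct, and it matches what the paper does for the algebraic counterparts of this lemma (the paper cites the lemma without reproving it). The first item is exactly the argument of Lemma~\ref{lemma:assuringAssures}, via $\il\vdash\neg\varphi\trir\neg\varphi$. For the second item, the paper's analogue (the corollary following Lemma~\ref{lemma:assuringAssures}) splits your single instantiation $\alpha=\bot$, $S'=\{\varphi\}$ into two steps. First, $\nec\psi\vdash_{\il}\neg\psi\trir\bot$ with the empty sub-label shows that assuring successors are $R$-successors (Lemma~\ref{lemma:assuringThenSuccessors}). Then $\neg\varphi\in\Delta$ clashes with $\varphi\in\Delta$ from the first item. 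The ingredients (\play{1} plus normality) are the same, so your one-step shortcut is a valid compression of that route.
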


\subsection{Ultrafilter Extensions}

(Ultra)Filters are the algebraic counterpart of (complete) theories. 


\begin{definition}\label{def:ultrafilter} 
    Given a set $W$, an {ultrafilter} over $W$ consists of a maximal filter over $W$. That is, a collection $f\subseteq\wp(W)$ is an \textbf{ultrafilter} on $W$, and we write $f\in\textsf{U}({W})$, if it satisfies the following conditions:
     \textit{Properness:} $\varnothing\not\in f$;
     \textit{Upward Closure:} If $A\in f$ and $A\subseteq B$, then $B\in f$;
     \textit{Intersection Closure:} If $A,B\in f$, then $A \cap B\in f$;
         \textit{Totality:} For any $A\subseteq W$, either $A\in f$ or $\overline{A}\in f$, where $\overline{A}\coloneqq W\setminus A$.

    Ultrafilters over a given $W$ generated by a single fixed element $w\in W$ by considering all subsets of $W$ containing $w$ are called \textbf{principal ultrafilters}, denoted by $\Pi_w$. 
\end{definition}

When the starting set $W$ is infinite, the existence of non-principal ultrafilters over $W$ is provable modulo some choice principles over a constructive base~\cite{leinster2013codensity,BellSlomson:2006:ModelsAndUltraproducts} as a consequence of

\begin{lemma}[Ultrafilter Principle]\label{lemma:ultrafilterPrinciple} Every non-empty family $s$ of sets with the finite intersection property (FIP) -- that is, for any $t\underset{\textit{fin}}{\subseteq}s$, $\bigcap t\neq\varnothing$ -- generates a filter that is contained in at least one ultrafilter.
\end{lemma}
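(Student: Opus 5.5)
The plan is to build the generated filter explicitly and then extend it to a maximal one by Zorn's Lemma. Fix a non-empty family $s\subseteq\wp(W)$ with the FIP and set
\[
F_s := \{\, B\subseteq W \mid \exists\, t\underset{\textit{fin}}{\subseteq} s \text{ with } t\neq\varnothing \text{ and } \textstyle\bigcap t\subseteq B\,\}.
\]
We first check that $F_s$ is a proper filter containing $s$. It contains $s$ by taking singletons $t=\{A\}$. It is upward closed by construction. It is closed under intersections because $\bigcap t_1\cap\bigcap t_2=\bigcap(t_1\cup t_2)$. It is proper because every $\bigcap t$ is non-empty by the FIP, so $\varnothing\notin F_s$. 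Since $s\neq\varnothing$, $F_s$ is non-empty and hence $W\in F_s$. Clearly $F_s$ is the least filter containing $s$, so this is the filter \emph{generated} by $s$.

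Next we apply Zorn's Lemma to the poset $\mathcal P$ of all proper filters on $W$ containing $F_s$, ordered by inclusion. $\mathcal P$ is non-empty since $F_s\in\mathcal P$. Let $\mathcal C$ be a non-empty chain in $\mathcal P$; we claim $\bigcup\mathcal C\in\mathcal P$. Upward closure is immediate. For intersections, given $A\in G_1$ and $B\in G_2$ with $G_1,G_2\in\mathcal C$, comparability puts both in the larger of the two, which then contains $A\cap B$. Properness holds since no member of the chain contains $\varnothing$. So every chain has an upper bound, and Zorn yields a maximal element $f\in\mathcal P$.

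It remains to show that $f$ satisfies Totality, so that $f\in\textsf{U}(W)$ in the sense of Definition~\ref{def:ultrafilter}; this is the only step requiring an idea. Suppose $A\notin f$ and $\overline{A}\notin f$. Consider $f\cup\{A\}$. If it has the FIP, then its generated filter $F_{f\cup\{A\}}$, built as in the first step, is a proper filter strictly extending $f$, contradicting maximality. Otherwise some $C\in f$ has $C\cap A=\varnothing$, using that $f$ is closed under finite intersections. Then $C\subseteq\overline{A}$, and upward closure gives $\overline{A}\in f$, a contradiction. Hence $f$ is an ultrafilter containing $F_s\supseteq s$.

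Finally, we note that the argument uses only Zorn's Lemma, which is the choice principle alluded to in the text; weaker principles such as the Boolean Prime Ideal Theorem would also suffice. We also remark that when $s$ is, for instance, the family of cofinite subsets of an infinite $W$, the resulting ultrafilter is non-principal. This is the application the paper has in mind.
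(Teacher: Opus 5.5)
Your proof is correct and complete. It is the standard argument: build the generated filter $F_s$ explicitly (it matches the paper's characterization in Lemma~\ref{lemma:charactGenFilter}), apply Zorn's Lemma to the proper filters extending it, and use maximality to force totality. The paper gives no proof of its own; it cites the literature (Bell--Slomson, Leinster) and notes only that a choice principle is needed, which is exactly the argument you supplied.
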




Frames can be extended via ultrafilters as follows.

\begin{definition}\label{def:ultrafilterExtension}
    Given a relational frame $\mathcal{F} = \langle W, \{R_i\}_{i <\omega} \rangle$, its \textbf{ultrafilter extension} $\ultraFilterCounterpart{\mathcal{F}}$ is given by:
    \begin{itemize}
        \item \textbf{Worlds:} The new domain is $\textsf{U}({W})$, the set of all ultrafilters over $W$;
        \item \textbf{Accessibility Relations ($\ultraFilterCounterpart{R}_i$):} For any two ultrafilters $f, g \in \textsf{U}({W})$:
        $$f \ultraFilterCounterpart{R}_i g \iff \forall X \subseteq W.\,(X \in g \implies R^{-1}_i(X)\in f),$$    where $R^{-1}_i(X) := \{ w \in W \mid \exists x \in X.\, (w R_i x) \}$.
    \end{itemize}
    Given such an ultrafilter extension $\ultraFilterCounterpart{\Frame F}$, and an \textbf{evaluation} $\eval$ over $\Frame F$ we define
    $\ultraFilterCounterpart{\eval}(p):=\{f\in\textsf{U}({W}) \mid \eval(p)\in f\}.$
    Therefore, given a model $\mathcal{M}$, we denote by $\ultraFilterCounterpart{\mathcal{M}}$ its ultrafilter extension. 
\end{definition}

Working with the standard forcing relation $x\Vdash \langle i\rangle \varphi :\Leftrightarrow \exists \, y\ (xR_iy \wedge y \Vdash \varphi)$, the central results on ultrafilter extensions of relational models are collected by (see, e.g.~\cite{BlackburnEtAll:2001:ModalLogic})

\begin{theorem} Given a relational model $\Frame M=\la \Frame F, \eval\ra$, we have:
    \begin{itemize}
        \item \textbf{Truth Lemma:} For any formula $\varphi$ and any ultrafilter $f\in\textsf{U}({W})$, the following equivalence holds: 
        $(\ultraFilterCounterpart{\mathcal{M}}, f) \Vdash \varphi \iff \eval(\varphi) \in f$,
        where $\eval(\varphi) = \{ w \in W \mid (\mathcal{M}, w) \Vdash \varphi \}$.
        \item \textbf{Reflection of Validity:} If a formula $\varphi$ is valid in the ultrafilter extension, it is valid in the original frame, that is: if $\ultraFilterCounterpart{\mathcal{F}} \vDash \varphi$, then $\mathcal{F} \vDash \varphi$.
        \item \textbf{The Goldblatt-Thomason Theorem:} An elementary (i.e.~first-order definable) class of frames $\mathcal{C}$ is modally definable if and only if it is closed under: generated subframes, bounded morphisms, and disjoint unions; and \textbf{reflects ultrafilter extensions}, that is:
        $\text{if } \ultraFilterCounterpart{\mathcal{F}} \in \mathcal{C}, \text{ then } \mathcal{F} \in \mathcal{C}$.
    \end{itemize}    
\end{theorem}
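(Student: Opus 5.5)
The plan is to take the three clauses in order, because each one builds on the previous.

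\textbf{Truth Lemma.} I argue by induction on $\varphi$. The atomic case is exactly the definition $\ultraFilterCounterpart{\eval}(p)=\{f \mid \eval(p)\in f\}$. The case of $\bot$ holds by Properness, since $\eval(\bot)=\varnothing\notin f$. The case of $\to$ uses Totality, Intersection Closure and Upward Closure: we have $\eval(\alpha\to\beta)=\overline{\eval(\alpha)}\cup\eval(\beta)$, and an ultrafilter contains a union of this form iff it contains $\overline{\eval(\alpha)}$ or contains $\eval(\beta)$.

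The only real work is the diamond case $\langle i\rangle\varphi$. We must show that $\exists g\,(f\ultraFilterCounterpart{R}_i g \wedge \eval(\varphi)\in g)$ holds iff $R_i^{-1}(\eval(\varphi))\in f$, where $R_i^{-1}(\eval(\varphi))=\eval(\langle i\rangle\varphi)$.
\begin{itemize}
\item Left to right: this is immediate from the definition of $\ultraFilterCounterpart{R}_i$ applied to $X=\eval(\varphi)$.
\item Right to left: this is the main obstacle and needs the Ultrafilter Principle (Lemma~\ref{lemma:ultrafilterPrinciple}). Consider the family
$$s=\{\eval(\varphi)\}\cup\{Y\subseteq W \mid \overline{R_i^{-1}(\overline{Y})}\in f\},$$
that is, the sets $Y$ with $[i]Y\in f$. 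I first check that $s$ has the FIP. The operator $[i]$ commutes with finite intersections, so the second part of $s$ is closed under finite intersections. Now suppose $\eval(\varphi)\cap Y=\varnothing$ for such a $Y$. Then $Y\subseteq\overline{\eval(\varphi)}$, so $R_i^{-1}(\eval(\varphi))\cap [i]Y=\varnothing$. This is impossible, because both sets lie in $f$.
\end{itemize}
By Lemma~\ref{lemma:ultrafilterPrinciple}, $s$ extends to an ultrafilter $g$. To verify $f\ultraFilterCounterpart{R}_i g$, take $X\in g$ and suppose $R_i^{-1}(X)\notin f$. Then $\overline{R_i^{-1}(X)}=[i]\overline{X}\in f$, so $\overline{X}\in s\subseteq g$, which contradicts $X\in g$.

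\textbf{Reflection of Validity.} I argue by contraposition. Suppose $\mathcal F\nvDash\varphi$. Then there are a valuation $\eval$ and a world $w$ with $w\notin\eval(\varphi)$, so $\eval(\varphi)\notin\Pi_w$. By the Truth Lemma, $(\ultraFilterCounterpart{\mathcal M},\Pi_w)\nVdash\varphi$. Since $\ultraFilterCounterpart{\eval}$ is a valuation on $\ultraFilterCounterpart{\mathcal F}$, it follows that $\ultraFilterCounterpart{\mathcal F}\nvDash\varphi$.

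\textbf{Goldblatt--Thomason.} For the left-to-right direction, modal validity is preserved under generated subframes, bounded morphic images and disjoint unions; these are standard invariance results. Reflection of ultrafilter extensions is exactly the previous item.

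For the converse, take $\Lambda$ to be the logic of $\mathcal C$, and let $\mathcal F\vDash\Lambda$. For each valuation, the theory of the resulting model is satisfiable in $\mathcal C$. Taking disjoint unions of models in $\mathcal C$, and then an ultrapower (or an $\omega$-saturated elementary extension, which stays in $\mathcal C$ because $\mathcal C$ is elementary), we obtain a modally saturated model $\mathcal N$ with base frame in $\mathcal C$. Modal saturation then lets us build a bounded morphism from $\mathcal N$ onto $\ultraFilterCounterpart{\mathcal F}$, exploiting the canonical-style description of ultrafilters; this step mirrors the Truth Lemma argument. Closure of $\mathcal C$ under bounded morphic images gives $\ultraFilterCounterpart{\mathcal F}\in\mathcal C$, and reflection then gives $\mathcal F\in\mathcal C$.

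Apart from the diamond case above, the delicate point is the saturation and bounded-morphism step. There I would follow the standard argument of \cite{BlackburnEtAll:2001:ModalLogic} directly rather than reprove it.
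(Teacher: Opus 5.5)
The paper gives no proof of this theorem; it quotes it as background from Blackburn, de Rijke and Venema. Your Truth Lemma, including the FIP argument in the diamond case, is the standard textbook argument and is correct. So is your contrapositive proof of Reflection via $\Pi_w$.

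\textbf{The gap is in the converse of Goldblatt--Thomason.} Your sketch never uses closure under generated subframes, and no correct argument can avoid it. Consider the class of frames satisfying $\exists x\exists y\,xRy$.
\begin{itemize}
\item It is elementary, and it is closed under disjoint unions and bounded morphic images.
\item It reflects ultrafilter extensions: if $u\,\ultraFilterCounterpart{R}\,v$, then $W\in v$ forces $R^{-1}(W)\in u$, so $R^{-1}(W)\neq\varnothing$.
\item Yet the one-point frame without successors validates its logic, because it is a generated subframe of its disjoint union with a one-edge frame.
\end{itemize}

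The step that breaks is ``the theory of the resulting model is satisfiable in $\mathcal C$ \ldots\ build a bounded morphism from $\mathcal N$ onto $\ultraFilterCounterpart{\mathcal F}$''. Take a letter $p_A$ for every $A\subseteq W$ and the natural valuation $V$; this expands the paper's countable language, which is harmless for validity. The map $x\mapsto\{A\mid \mathcal N,x\Vdash p_A\}$ produces ultrafilters satisfying the forth condition only if certain formulas are \emph{globally} true in $\mathcal N$. These are formulas globally true in $(\mathcal F,V)$, such as $p_{A\cap B}\leftrightarrow p_A\wedge p_B$, $p_{\overline A}\leftrightarrow\neg p_A$ and $\langle i\rangle p_A\leftrightarrow p_{R_i^{-1}(A)}$. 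Validity of $\Lambda$ on $\mathcal F$ only gives \emph{local} satisfiability in $\mathcal C$. In the example above, $\nec\bot$ is globally true on the one-point frame, but no model on a frame of the class makes it globally true.

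\textbf{The standard repair} is as follows.
\begin{itemize}
\item For each ultrafilter $u$, the set $\{\varphi\mid V(\varphi)\in u\}$ is finitely satisfiable in $\mathcal C$, because $\mathcal F\vDash\Lambda$.
\item By first-order compactness, which is a use of elementarity you omit, the whole set is satisfied at a point $x_u$ of some model on a $\mathcal C$-frame.
\item This set contains $[i_1]\cdots[i_n]\delta$ for every globally true $\delta$ and every $n$. Hence the submodel generated by $x_u$ makes all these formulas globally true.
\item Its frame lies in $\mathcal C$ by closure under generated subframes. This is the step your sketch is missing.
\item Only then take the disjoint union of these submodels and an $\omega$-saturated elementary extension.
\end{itemize}
The map $x\mapsto\{A\mid x\Vdash p_A\}$ is now well defined. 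The points $x_u$ give surjectivity, and modal saturation gives the back condition.
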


\section{A Non-Definability Result}\label{section:NonDefinability}
In \cite{NavarroJoosten:2025:tamesemantics} the class of \textit{Pencil frames} $\mathcal{C}_{\textsf{Pencil}}$ is identified by the first-order frame property 
\[
xRyS_xzRu \,\wedge\, yRvS_xu\to yRu.
\]

The modal logics \ilm and \ilp arise by adding the axiom schemes $\alpha \trir \beta \to \alpha \wedge \nec \gamma \trir \beta \wedge \nec \gamma$ and $\alpha \trir \beta \to \nec (\alpha \trir \beta)$ respectively. It is well-known \cite{Visser:1997:OverviewIL} and easy to see that the respective frame conditions are $yS_xzRu \to yRu$ and $xRyRzS_xu \to zS_yu$. The pencil frame condition $\Frame C_{\textsf{Pencil}}$ can be seen to hold on all \ilp and all \ilm frames. Since \ilm is the interpretability logic of all arithmetical theories with full induction \cite{Berarducci:1990:InterpretabilityLogicPA, Shavrukov:1988:InterpretabilityLogicPA} and, since \ilp is the interpretability logic of all finitely axiomatizable theories that prove the totality of super-exponentiation\cite{Visser:1990:InterpretabilityLogic}, the pencil frame is of interest to the long-standing open problem to determine the interpretability logic of all reasonable arithmetical theories \cite{Joosten:2015:TwoSeries, JoostenVisser:2000:IntLogicAll}.

\begin{definition}\label{def:bisimulation}
    [From~\cite{Visser:1997:OverviewIL}]: A relation $Z$ between the nodes of two models $\Frame M$ and $\Frame N$ (denoted $m,m_1,m_2...$ and $n,n_1,n_2...$ respectively) is a bisimulation if and only if it satisfies the following:
    \begin{enumerate}
        \item \label{def:bisimulation1} for all $p\in\atom$ $mZn\Rightarrow (\Frame M,m\Vdash p\iff \Frame N,n\Vdash p)$.
        \item \label{def:bisimulation2} (Zig) If $mR^{\Frame M}m_1$ and $mZn$, then there is some $n_1$ such that $nR^{\Frame N}n_1$ and $m_1Zn_1$ and for all $n_2$ in $\Frame N$ such that $n_1S^{\Frame N}_nn_2$ there is some $m_2$ such that $m_2Zn_2$ and $m_1S^{\Frame M}_mm_2$.
        \item \label{def:bisimulation3} (Zag) If $nR^{\Frame N}n_1$ and $mZn$ then there is some $m_1$ such that $mRm_1$ and $m_1Zn_1$ and for all $m_2$ such that $m_1S^{\Frame M}_mm_2$ there is some $n_2$ such that $m_2Zn_2$ and $n_1S_nn_2$.
    \end{enumerate}
\end{definition}

As always, bisimilar worlds are modally equivalent \cite{Visser:1997:OverviewIL} and this is the only property we use in our proof below. We prove that the class $\mathcal{C}_{\textsf{Pencil}}$ is not modally definable. To do so, let us consider the frames $\mathcal{F}_0$ and $\mathcal{F}_1$ from Figures~\ref{fig:f0} and ~\ref{fig:f1}, respectively, where $\mathcal{F}_0$ and $\mathcal{F}_1$ satisfy the condition $x_iRy_iS_{x_i}z_iRu_i\wedge y_iRv_i\wedge \bigwedge_{j\in\omega} v_iS_{x_i}w^j_i$, but $\mathcal{F}_1$ satisfies $v_1S_{x_1} u_1$ and $\mathcal{F}_0$ does not satisfy $v_0 S_{x_0} u_0$.
\begin{figure}[htbp]
    \centering
    \begin{subfigure}[b]{0.35\textwidth}
        \includegraphics[width=\linewidth]{F0_Reworkedpdf.pdf}
        \caption{The $\mathcal{F}_0$ frame satisfying $\mathcal{C}_{\mathsf{Pencil}}$.}
        \label{fig:f0}
    \end{subfigure}
    \hfill  
    \begin{subfigure}[b]{0.35\textwidth}
        \includegraphics[width=\linewidth]{F1_Reworkedpdf.pdf}
        \caption{The $\mathcal{F}_1$ frame not satisfying $\mathcal{C}_{\mathsf{Pencil}}$.}
        \label{fig:f1}
    \end{subfigure}
    \caption{$\mathcal{F}_0$ and $\mathcal{F}_1$ frames. Only $S_{x_0} $ and $S_{x_1}$ are displayed for clarity. $R-$transitivity and $S-$reflexivity, etc.~are also not displayed.}
\end{figure} 

More precisely, we prove the following.
    \begin{theorem} The frame class identified by the Pencil Condition $\Frame C_{\textsf{Pencil}}$ is not modally definable, that is:
        $\neg\exists \varphi\in\form. \,\forall\mathcal{F}\,(\mathcal{F}\vDash\varphi\iff \mathcal{F}\in\mathcal{C}_{\mathsf{Pencil} }).$
    \end{theorem}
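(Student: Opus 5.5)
We argue by contradiction: suppose some $\varphi$ defines $\mathcal{C}_{\textsf{Pencil}}$. We use the two frames of Figures~\ref{fig:f0} and~\ref{fig:f1}. $\mathcal{F}_0$ satisfies the Pencil Condition, so $\mathcal{F}_0\vDash\varphi$. $\mathcal{F}_1$ does not: the witness $x_1Ry_1S_{x_1}z_1Ru_1$, $y_1Rv_1$, $v_1S_{x_1}u_1$ holds, but $y_1Ru_1$ fails. Hence there are a valuation $\eval_1$ on $\mathcal{F}_1$ and a world $a$ with $\mathcal{F}_1,\eval_1,a\nVdash\varphi$. The goal is a valuation $\eval_0$ on $\mathcal{F}_0$ and a bisimulation $Z$ (Definition~\ref{def:bisimulation}) relating some world of $\mathcal{F}_0$ to $a$. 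Since bisimilar worlds are modally equivalent, $\varphi$ would then fail in $\mathcal{F}_0$, a contradiction.

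\textbf{Step 1: localise the failure.} Worlds outside the $x$-configuration have the same (or bisimilar) generated subframes in both frames. So if $a$ is not $x_1$, we relate it to its twin and are done. Thus the main case is $a=x_1$, and we want $x_0\,Z\,x_1$.

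\textbf{Step 2: define $\eval_0$ and $Z$.} Since $\mathcal{F}_0$ and $\mathcal{F}_1$ differ only in whether $v S_x u$, the idea is to use the infinitely many worlds $w^j$ with $vS_xw^j$. Because the $w^j$ are infinitely many, we choose $\eval_0$ on $\mathcal{F}_0$ so that the family $\{w^j_0\}$ realises (up to bisimulation) every pattern needed, and in particular contains a world that plays the role of $u_1$. Concretely, $Z$ is the identity-style correspondence on the common part. In addition, $u_1$ is related to some $w_0^{j}$, or alternatively some $w^j_0$ is related to $u_1$; which of these depends on the exact shape of the figures. The $w^j$ lie in $R[x]$ and are treated like the $u$-world as far as $R$-successors go, so atoms and $R$-successor structure can be matched.

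\textbf{Step 3: verify Zig and Zag.} Condition~\ref{def:bisimulation1} of the definition holds by choice of $\eval_0$. For Zig/Zag at $(x_0,x_1)$ we take the matching $R$-successor and check the $S$-clause. The only problematic instance is $v_1S_{x_1}u_1$ in $\mathcal{F}_1$, which has no counterpart $v_0S_{x_0}u_0$. Here the Zag clause only requires some $n_2$ with $m_2Zn_2$ and $v_0S_{x_0}n_2$, and we take $n_2=w^j_0$ with $u_1\,Z\,w^j_0$. Conversely, the $S_{x_0}$-successors $w^j_0$ of $v_0$ must be matched by $S_{x_1}$-successors of $v_1$; the $w^j_1$ serve here. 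The remaining pairs, including those under $y$ and $z$ and the relations $S_y$, $S_z$, are routine.

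\textbf{Main obstacle.} The hard part is ensuring that relating $u_1$ to a $w^j_0$ respects $R$-predecessors inside the $S$ clauses. In $\mathcal{F}_1$, $z_1Ru_1$ holds, and $y_1$ must not see $u_1$. The matched world in $\mathcal{F}_0$ must have bisimilar successor structure, but it need not have the same predecessors, because bisimulation only looks forward. I would first try choosing the $w^j_0$ to be structural copies of $u$, meaning endpoints with identical valuation. This is possible if the figures draw them as $R$-maximal. If $\varphi$'s valuation makes $u_1$ non-trivial, I would instead copy the generated subframe of $u_1$ above each $w^j$.
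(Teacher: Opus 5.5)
Your overall strategy is the paper's. From $\mathcal{F}_0\vDash\varphi$ and $\mathcal{F}_1\nvDash\varphi$, you turn an arbitrary valuation $\eval_1$ on $\mathcal{F}_1$ into a valuation $\eval_0$ on $\mathcal{F}_0$ together with a bisimulation, using the infinitely many $w^j$ to host a copy of $u_1$. However, you never actually produce $\eval_0$ and $Z$, and that construction is the whole content of the proof. Taken literally, what you do say is inconsistent. If $Z$ is ``identity-style'' on the common part (so $w^j_1\,Z\,w^j_0$) and also $u_1\,Z\,w^j_0$, then $w^j_0$ must agree on every atom with both $u_1$ and $w^j_1$, which fails for an arbitrary $\eval_1$. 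The missing idea is a Hilbert-hotel shift, and this is exactly where infinitude is used. Give $w^1_0$ the atoms of $u_1$ and $w^{i+1}_0$ those of $w^i_1$, copy $x,y,z,u,v$, and take $Z=\{(x_1,x_0),(y_1,y_0),(z_1,z_0),(v_1,v_0),(u_1,u_0),(u_1,w^1_0)\}\cup\{(w^i_1,w^{i+1}_0)\}_{i\geq 1}$. Then the $S_{x_0}$-successor $w^1_0$ of $v_0$ is answered by $u_1$, not by some $w^j_1$ as you suggest, and $w^j_0$ for $j>1$ is answered by $w^{j-1}_1$. Since this $Z$ is total on $W_1$, your Step~1 case split is also unnecessary.

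Your ``main obstacle'' is real, but ``bisimulation only looks forward'' does not dispose of it. The predecessors of the host of $u_1$'s atoms matter through the Zag clause of the pairs they belong to. If $y_0Rw^1_0$, Zag at $(y_1,y_0)$ demands some $m_1$ with $y_1Rm_1$ and $m_1Zw^1_0$. Making $p$ true only at $u_1$, this forces $y_1Ru_1$, which is precisely what fails in $\mathcal{F}_1$. This is not an artefact of a particular $Z$. Put $q$ true only at $v_1$ and $p$ only at $u_1$; then $x_1$ refutes $(q\trir p)\to\nec(\pos q\to\pos p)$. That formula is valid on every frame satisfying $tRy'Rv'S_tu'\to y'Ru'$, so $\mathcal{F}_0$ must contain an $S_{x_0}$-successor of $v_0$ lying outside $R[y_0]$. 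The host also cannot be seen by $z_0$, since then the Pencil condition in $\mathcal{F}_0$ would force $y_0Rw^1_0$. So the $w^j$ must be $R$-successors of $x$ alone, with $vS_xw^j$ bare $S$-edges. In that configuration the $Z$ above passes all Zig/Zag checks, and $(y_1,y_0)$ is exactly the pair you cannot wave through as ``routine''. Be aware that the paper's text asserts $y_0R^0w^i_0$ for all $i$. Under that reading its own $Z$ fails Zag at $(y_1,y_0)$ for the same reason, so this is the point to get right in your write-up. Your fallback of copying generated subframes above the $w^j$ is beside the point: $u$ is $R$-maximal, and what matters is which worlds see the copy, not what lies above it.
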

    \begin{proof}
        Suppose for a contradiction that $\exists \varphi\in\form.\,\forall\mathcal{F}(\mathcal{F}\vDash\varphi\iff \mathcal{F}\in \mathcal{C}_{\mathsf{Pencil} }).$
        Then, pick the frames $\mathcal{F}_0$ and $\mathcal{F}_1$ depicted in Figures \ref{fig:f0} and \ref{fig:f1}, respectively. So, the frames are identical but for one $S$ relation: $v_1S_{x_1}u_1$ holds whereas $v_0S_{x_0}u_0$ does not. We observe that $v_iS_{x_i}w^j_i$, and there is no $S_{x_i}$ relation between any $w^j_i$ and $w^k_i$ for different $j\neq k>0$. Also, we observe that $\mathcal{F}_0\in \mathcal{C}_{\mathsf{Pencil}}$ and, $\mathcal{F}_1\notin\mathcal{C}_{\mathsf{Pencil}}$. Therefore, by hypothesis, $\mathcal{F}_0\vDash \varphi$ and $\mathcal{F}_1\nvDash \varphi$.
        
        Now, we prove the following claim:
        For every valuation $\eval_1$ on $\mathcal{F}_1$ there is another valuation $\eval_0$ on $\mathcal{F}_0$ such that $\Frame M\coloneqq \langle \mathcal{F}_1, \eval_1\rangle$ and $\Frame N\coloneqq \langle \mathcal{F}_0, \eval_0\rangle$ are bisimilar.

            Indeed, consider an arbitrary $\mathcal{F}_1$-valuation $\eval_1$ and consider the model $\langle\mathcal{F}_1,\eval_1\rangle$. 
            Given $p\in\atom$, let us define $\eval_0(p)$ over $\Frame F_0$ as follows:
            \begin{multicols}{2}
            \begin{itemize}
                \item For $t\in\{x,y,z,u\}$:
                
                $t_0\in \eval_0(p):\iff t_1\in \eval_1(p)$
                \item$v_0\in \eval_0(p):\iff v_1\in \eval_1(p),$
                \item$w^1_0\in \eval_0(p):\iff u_1\in \eval_1(p),$
                \item$w^{i+1}_0\in \eval_0(p):\iff w^i_1\in \eval_1(p) \quad (i\geq 1).$
            \end{itemize}
            \end{multicols}
            Next, consider the relation $Z\coloneqq\{(x_1,x_0),(y_1,y_0),(z_1,z_0),(v_1,v_0),(u_1,u_0),(u_1,w_0^1)\}\cup\{(w_1^{i},w_0^{i+1})\}_{i\geq 1}.$
            It is routine to check that $Z$ is a bisimilation between $\langle\mathcal{F}_1,\eval_1\rangle$ and $\langle\mathcal{F}_0,\eval_0\rangle$.
            First we check that for all $p\in\atom$,  $mZn\Rightarrow (\Frame M,m\Vdash p\iff \Frame N,n\Vdash p)$. This is clear by the definition of $ev_0$ provided above.
            Next, the Zig condition \ref{def:bisimulation}.\ref{def:bisimulation2}. We note that $x_1R^1[W_1-\{x_1\}]$, that $x_0R^0[W_0-\{x_0\}]$, that $x_1Zx_0$ and, of course, that for all $t_1\in W_1$ there is some $t_0\in W_0$ such that $t_1Zt_0$. For the other points we see that $z_1R^1u_1$ for $z_1Zz_0$ and $z_0R^0u_0$ for $u_1Zu_0$. Also, observe that $y_1R^1w_1^{i}$ for all $i$, and $y_1 Z y_0$, but notice that $y_0R^0w_0^{i}$ for all $i$ and that for all $w_1^i$ there is some $w_0^j$ such that $w_1^iZw_0^j$. Besides of that, $y_1R^1v_1$ and $y_1Zy_0$, but observe that $y_0R^0v_0$ and $v_1Zv_0$.

            $S_w$ for $w\in W_1-\{x_1\}$ our claim also holds following the definition. The only critical point is for the $S^1_{x_1}$ relation, for which we will only show that condition \ref{def:bisimulation}.\ref{def:bisimulation2} holds for $v_1S^1_{x_1}u_1$ and $v_1S^1_{x_1}w_1^i$, for $i\geq 0$. We observe that $x_1 R^1 v_1$, $x_1 Z x_0$, $x_0 R^0 v_0$ and $v_1 Z v_0$. Also, notice that $v_0 S^0_{x_0} w_0^j$, for $j \geq 0$. We need to find some $m_2 \in W_1$ such that $m_2 Z w_0^j$ and $v_1 S_{x_1} m_2$. It suffices to pick $m_2= v_1$, $m_2 = u_1$ and $m_2 = w_1^{j-1}$ for the cases $j=0$, $j=1$ and $j>1$, respectively.
            
            Condition $\ref{def:bisimulation}.\ref{def:bisimulation3}$ follows in a similar way. 
            
        %
        Since $\mathcal{F}_1\nvDash \varphi$, we have that there exists a valuation $\eval$ such that $\langle\mathcal{F}_1,\eval\rangle\nvDash\varphi$. By the last claim, there is a valuation $ev'$ such that $\la\mathcal{F}_1,\eval\ra$ is bisimilar with $\la\mathcal{F}_0,\eval'\ra$. By preservation of validity under bisimulation~\cite[§~3.4]{BLACKBURN20071}, we have that $\langle\mathcal{F}_0,\eval'\rangle\nvDash \varphi,$
        contra $\mathcal{F}_0\vDash \varphi$.
    \end{proof}

\section{Ultrafilter Extensions}\label{section:ultrafilterExtensions}

To define ultrafilter extensions for interpretability logic we need to find algebraic counterparts for labels and assuring successors.

\subsection{Theories, Filters and Ultrafilters}
In this section we recall some basic facts of modal ultrafilter extensions and formulate the basic definitions needed to generalise all to interpretability logic.
\begin{definition}
Given a binary relation $R$ over non-empty $W$, we define 
$\Rhat : \powerset W \longrightarrow \powerset W$ as the dual operator for $R^{-1}$ (from Def.~\ref{def:ultrafilterExtension}) as follows: $
\Rhat (Y) \ := \ \{ x \in W \mid \forall y (xRy \Rightarrow y \in Y) \}$. Moreover, we define $f_{\nec} := \{ Y \mid \Rhat(Y) \in f\}$.

\end{definition}

The following lemmas are folklore -- see, e.g., \cite{BlackburnEtAll:2001:ModalLogic}.
\begin{lemma}\label{lemma:interdef}
    Given a a binary relation $R$ over a non-empty $W$, for any $Y\subseteq W$ we have
    $\Rhat(Y){=}\overline{R^{-1}(\overline{Y})}$ and $R^{-1}(g)\subseteq f \ \Longleftrightarrow \ f_{\nec} \subseteq g$.
\end{lemma}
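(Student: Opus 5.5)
The first identity $\Rhat(Y)=\overline{R^{-1}(\overline{Y})}$ is a direct unfolding of definitions. A world $x$ lies in $\Rhat(Y)$ iff every $R$-successor of $x$ lies in $Y$. This holds iff there is no $y$ with $xRy$ and $y\in\overline{Y}$, i.e.\ iff $x\notin R^{-1}(\overline{Y})$. Equivalently, $x\in\overline{R^{-1}(\overline{Y})}$. I would write this as a single chain of equivalences, with no case split.

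For the second equivalence, I read $R^{-1}(g)$ as the collection $\{R^{-1}(X)\mid X\in g\}$. The claim is then that ``$X\in g$ implies $R^{-1}(X)\in f$'' holds for all $X$ iff ``$\Rhat(Y)\in f$ implies $Y\in g$'' holds for all $Y$. I take $f$ and $g$ to be ultrafilters on $W$, as in Definition~\ref{def:ultrafilterExtension}; totality and properness are exactly what let us move between a set and its complement. The first identity, applied to $\overline{Y}$, gives the dual identity $R^{-1}(Y)=\overline{\Rhat(\overline{Y})}$.

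($\Rightarrow$) Assume $R^{-1}(g)\subseteq f$ and let $Y\in f_{\nec}$, so $\Rhat(Y)\in f$. Suppose, for contradiction, that $Y\notin g$. By totality $\overline{Y}\in g$, hence $R^{-1}(\overline{Y})\in f$. By the first identity, $R^{-1}(\overline{Y})=\overline{\Rhat(Y)}$. So both $\Rhat(Y)$ and its complement lie in $f$, and intersection closure puts $\varnothing\in f$, contradicting properness. Therefore $Y\in g$.

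($\Leftarrow$) Assume $f_{\nec}\subseteq g$ and let $X\in g$. Suppose, for contradiction, that $R^{-1}(X)\notin f$. By totality $\overline{R^{-1}(X)}\in f$. By the first identity, $\overline{R^{-1}(X)}=\Rhat(\overline{X})$, so $\overline{X}\in f_{\nec}\subseteq g$. Together with $X\in g$, this contradicts properness of $g$.

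No step is a real obstacle. The only care needed is to state the ultrafilter hypothesis and to use the complement form of the first identity at the right place.
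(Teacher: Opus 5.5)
Your proof is correct. The first identity is a direct unfolding of the definitions, and the second equivalence follows exactly as you argue from totality, intersection closure and properness, with the ultrafilter hypothesis on $f$ and $g$ (which the statement leaves implicit) rightly made explicit. The paper gives no proof of its own and calls the lemma folklore, citing Blackburn, de Rijke and Venema; your argument is essentially the standard one found there.
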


In our context, we need to define filter-assuring successor, in analogy with Def.~\ref{def:assuringLabel} and \cite{GorisBilkovaJoostenMikec:2020:ArXivLabels}.

\begin{definition}
    
 Given a Veltman frame $\Frame F = \langle W,R, \{ S_x \mid x\in W \}\rangle$, let us define\footnote{In \cite{sestak2024generalframesinterpretability} (corresponding to \cite{Sestak2024Generalframesforinterpretabilitylogic}) the notation $m_{\trir}$ is used instead of $S^{-1}$.}
$S^{-1} : \powerset{W}\times \powerset{W} \longrightarrow \powerset{W}$ by
    $$S^{-1}(X,Y):= \{w{\in} W\mid \forall x{\in} X.\,(wRx\Rightarrow \exists y{\in}Y.\,xS_wy) \}.$$
    
Let $l\subseteq\wp(W)\setminus \{ \varnothing\}$  have the FIP, and let $f,g\in\textsf{U}(W)$. We define $f\prec_l g$ as follows:
    \[
    f\prec_l g \ \ \: \iff \Bigg[ \forall A{\subseteq} W\, \forall^{\sf fin} \{ S_i \in l\}\, \Big( S^{-1}(\overline A, \bigcup_{i}\overline{S_i})\in f \Rightarrow A, \Rhat A \in g \Big) \Bigg]
    \]
\end{definition}

Lemma \ref{lemma:assuringThenSuccessors} below will prove that filter-assuring successors are indeed successors in the sense of $R^{\sf ue}$.

\subsection{Ultrafilter Extensions for Frames and Models}\label{sec:ultrafilterVeltman}

\begin{definition}\label{def:uedef} Let $\Frame F=\langle W,R,\{S_w \mid w\in W\}\rangle$ be a Veltman Frame, the ultrafilter extension of the Veltman frame $\ultraFilterCounterpart{F}$ is defined recursively:
\begin{enumerate}
    \item \label{sec:ultrafilterVeltman1}
    $\ultraFilterCounterpart{W}$ is recursively defined:
    \begin{enumerate}
        \item $(f,\langle\rangle)\in \ultraFilterCounterpart{W}, \text{ for each ultrafilter $f$ over $W$}$.
        \item $ (f,\sigma)\in \ultraFilterCounterpart{W} \wedge f\prec_l g\Rightarrow( g,\sigma^\smallfrown \la l \ra ) \in \ultraFilterCounterpart{W}$.
    \end{enumerate}
    \item \label{sec:ultrafilterVeltman2}
    $\ultraFilterCounterpart{R}$ is defined as the transitive closure of $\ultraFilterCounterpart{R}_{\sf one}$ where $\ultraFilterCounterpart{R}_{\sf one}$ is defined for $\la f,\sigma\ra,  \la g, \sigma\smallfrown \la l \ra\ra \in \ultraFilterCounterpart{W}$ in the natural way as:
    \( \la
    f,\sigma \ra \ultraFilterCounterpart{R}_{\sf one}\la g,\sigma^\smallfrown \la l\ra \ra \ \iff \ f \prec_l g
    \)
    
    \item \label{sec:ultrafilterVeltman3}
    $S^{\sf ue}_{\la f,\sigma\ra}$ is defined as the smallest relation that is reflexive, transitive, so that it contains\\
    $R^{\sf ue}[\la f,\sigma \rangle]^2\cap R^{\sf ue}$ and $\ultraFilterCounterpart{S}_{\la f,\sigma \ra, {\sf one}}$  where the latter\footnote{We take the closure under the one-step relation here. Another approach can be to consider slightly different semantics as in \cite{NavarroJoosten:2025:tamesemantics}.} is defined as follows 
    \[
    \la g,\tau \ra \ultraFilterCounterpart{S}_{\la f,\sigma \ra, {\sf one}}\la h,\tau'\ra \iff 
\begin{cases}
\la f,\sigma \ra \ultraFilterCounterpart{R}\la g,\tau \ra \\
\la f,\sigma\ra \ultraFilterCounterpart{R}\la h,\tau'\ra \\
(\tau)_{|\sigma|}=(\tau')_{|\sigma|} \\
\end{cases}
\]
Note in particular $\tau = \sigma\smallfrown\la l\ra\smallfrown \overline \tau$ for some $l$ and $\overline \tau$ and that $\tau' = \sigma\smallfrown\la l\ra\smallfrown \overline{\tau'}$ for the very same $l$ and some $\overline{\tau'}$.
Furthermore, we have $f\prec_l g, h$ (See Lemma \ref{RlTransitivity}).
\end{enumerate}
\end{definition}
\begin{remark}
    A few considerations about this type of frames:\\
    \begin{tabular}{l}
        \makecell[l]{1. $\;$ Labels are a very simple way to keep track of the history of each world, which makes \\it possible to ``localize'' (in the sense of making local) the $\ultraFilterCounterpart{S}$ relation and the $\trir$ operator.}\\
        2. $\;$ Finite frames are no longer necessarily isomorphic to their ultrafilter extensions.
            \end{tabular}
\end{remark}

Next we define a translation between \form~and the language of ultrafilter extensions:

\begin{definition} For any formula $\varphi\in \form$ we define a translation $\varphi^t$  inductively by \\
    $\begin{array}{l}
        \quad\quad\quad\cdot\bot^t\coloneqq \varnothing \qquad\quad\quad
        \cdot p^t := A_p \text{where } p \in \atom \text{ and } A_p \text{ a set-variable}\quad
        \cdot(\varphi \to \psi)^t := \overline{(\varphi^t)}\cup (\psi^t) \\
        \qquad\qquad\qquad\qquad\cdot (\nec \varphi)^t := \Rhat (\varphi^t)\qquad\qquad
        \cdot(\varphi \trir \psi)^t :=  S^{-1}\Big( (\varphi^t), (\psi^t)\Big)
    \end{array}$
\end{definition}
We observe that $(\nec \phi)^{t} = \{x \in W: R[x] \subseteq \varphi^{t}\}$ and $(\pos \varphi)^{t} = \{x \in W: R[x] \cap \varphi^{t} \neq \emptyset\}$.

For a Veltman frame $\Frame F=\la W, R,\{S_x\}_{x\in W} \ra$, once we fix a valuation $\sf e$ that assigns subsets of $W$ to the set-variables $A_p$, this fixes in the obvious way for each formula $\varphi$ a set that we denote by $\llbracket \varphi ^t\rbf$. Similarly we use $p_A$ to denote the propositional variable such that $\lb p\rbf=A$.

Given a set of formulas $\Gamma=\{\varphi_1,...,\varphi_n\}$ we use $\lb\Gamma\rbf$ to denote the set $\{\lb \varphi\rbf,...,\lb \varphi\rbf\}$.



\begin{theorem}\label{thm:provW}
    Let $\Frame F = \la W, R,\{S_x\}_{x\in W} \ra$ be a Veltman frame. If $\il \vdash \varphi$, then for any valuation $\sf e$ we have $\lb \varphi^t \rbf {=} W$.
\end{theorem}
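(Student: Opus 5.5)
This is a soundness statement, and I will prove it by induction on the length of an \il-derivation of $\varphi$. The key observation is that the translation $\varphi^t$ under a valuation ${\sf e}$ coincides with the truth set of $\varphi$ in the Veltman model $\la \Frame F, \eval\ra$ with $\eval(p):={\sf e}(A_p)$.

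Concretely, I first check by induction on $\varphi$ that $\lb\varphi^t\rbf=\lb\varphi\rb_{\la\Frame F,\eval\ra}$.
\begin{itemize}
\item The atomic and implication clauses are immediate.
\item For $\nec$, the set $\Rhat(\lb\varphi^t\rbf)$ is exactly the set of worlds all of whose $R$-successors force $\varphi$.
\item For $\trir$, unfolding $S^{-1}(X,Y)$ gives exactly the Veltman clause for $\alpha\trir\beta$: every $R$-successor $x$ of $w$ lying in $X$ has some $y\in Y$ with $xS_wy$.
\end{itemize}
Since the $S_w$ only relate elements of $R[w]$, the $\exists y$ automatically ranges over $R$-successors of $w$, so nothing is lost.

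With this identification, the theorem reduces to the soundness of \il with respect to Veltman frames: $\lb\varphi^t\rbf=W$ means $\la\Frame F,\eval\ra\models\varphi$. Soundness is due to De Jongh and Veltman, as recalled after Definition~\ref{def:veltman}. If one prefers a self-contained argument, I would instead run the induction on derivations directly in the set-algebraic language:
\begin{itemize}
\item Modus ponens: if $\overline{X}\cup Y=W$ and $X=W$, then $Y=W$.
\item Necessitation: $\Rhat(W)=W$.
\item \K: $\Rhat$ preserves intersections.
\item \GL: use converse well-foundedness by taking an $R$-maximal counterexample.
\item \play{1}: from $\Rhat(\overline X\cup Y)$, take $y=x$ and use reflexivity of $S_w$.
\item \play{2}: compose witnesses using transitivity of $S_w$.
\item \play{3}: case split on whether the successor lies in $\alpha^t$ or $\beta^t$.
\item \play{4}: from $x\in\alpha^t$, get $y\in\beta^t$ with $xS_wy$, and $wRy$ gives a further $R$-successor in $\pos\beta^t$ setting. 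More precisely, for $\pos$, given $wRx$ with $x\in\pos\alpha^t$, choose $x'$ with $xRx'$, $x'\in\alpha^t$; then $wRx'$ by transitivity.
\item \play{5} ($\pos\alpha\trir\alpha$): apply the condition $R\cap R[w]^2\subseteq S_w$ to get $xS_wx'$.
\end{itemize}

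The main obstacle is the \GL\ case, which needs converse well-foundedness. Suppose $w\in\Rhat(\overline{\Rhat X}\cup X)$ but $w\notin\Rhat X$. Then the set $\{v : wRv,\ v\notin X\}$ is nonempty, so pick an $R$-maximal element $v$ of it. Every $R$-successor of $v$ is also an $R$-successor of $w$ by transitivity, and by maximality it lies in $X$, so $v\in\Rhat X$. The hypothesis then forces $v\in X$, a contradiction. The remaining axioms are routine verifications of the kind listed above.
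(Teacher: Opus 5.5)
Your proof is correct, but your main route differs from the paper's. You note that, with $\mathit{ev}(p):=\mathsf{e}(A_p)$, the clauses defining $\varphi^t$ are literally the Veltman forcing clauses. So the set denoted by $\varphi^t$ is the truth set of $\varphi$ in $\langle\mathcal{F},\mathit{ev}\rangle$, and the theorem is De Jongh and Veltman's soundness theorem in disguise. The paper instead re-proves soundness inside the set algebra by induction on derivations. It verifies $\mathsf{GL}$, $\mathsf{J}2$, modus ponens and necessitation explicitly, and declares the remaining axioms analogous.

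Your reduction is shorter and shows that nothing beyond ordinary Veltman soundness is at stake. The identification you rely on is essentially what the paper establishes later, through set equalities, in the proof of Lemma~\ref{extensioniffTranslation}. The paper's direct computation, by contrast, keeps the development self-contained and purely algebraic. It also shows which frame condition each axiom uses, which is the style of reasoning about $S^{-1}$ and $\widehat{R^{-1}}$ reused in later sections.

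Your self-contained fallback is essentially the paper's proof. Its $\mathsf{GL}$ case is slightly cleaner: you take an $R$-maximal element of $\{v : wRv,\ v\notin X\}$ directly, rather than of the paper's auxiliary set that also contains $y$.

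One small slip: the ``more precisely'' sentence in your $\mathsf{J}4$ bullet is really the first half of the $\mathsf{J}5$ argument. For $\mathsf{J}4$ the only point needed is that $xS_wy$ forces $wRy$, because $S_w\subseteq R[w]\times R[w]$.
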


\begin{proof}
    Throughout the proof we fix some frame $\Frame F = \la W, R,\{S_x\}_{x\in W} \ra$ and all sets mentioned below are subsets of $W$. Let us fix a choice of a valuation $\sf e$. We first need to prove that the translation of all the $\mathbf{IL}$ axioms under $\sf e$ evaluate to $W$, that is, $\llbracket \varphi^t \rbf = W$ for every axiom $\varphi$ of \il.  We only prove this for axioms $\GL$ and $\play{2}$ as the remaining axioms follow the same lines of algebraic reasoning:
    \begin{enumerate}

        \item 
        ${\sf GL}:\nec(\nec \alpha\rightarrow \alpha)\rightarrow \nec \alpha$; the axiom translates to $\varphi^t=\overline{\Rhat(\overline{\Rhat(\alpha^t)}\cup \alpha^t)}\cup \Rhat(\alpha^t)$. To see that, for any choice of $\alpha$, $\llbracket \varphi^t \rbf = W$, it suffices to show that, for any $A\subseteq W$, we have $\Rhat(\overline{\Rhat(A)}\cup A)\subseteq \Rhat(A)$.  
        Assume $x\in \Rhat(\overline{\Rhat(A)}\cup A)$ and assume $xRy$ and $y\notin A$. Since $R$ is transitive and conversely well-founded, let $z$ be the $R$-maximal element in $W$ such that $z\in \{w\in W \mid (yR w\wedge w\notin A) \vee w=y\}$. Since $xRz$ and neither $z\in A$ nor $z\in\overline{\Rhat(A)}$, we have a contradiction. Therefore, $y\in A$ for our arbitrary $y$, and we conclude $x\in\Rhat(A)$.       

        \item 
        $\play{2}:(\alpha\trir\beta\wedge\beta\trir\gamma)\rightarrow \alpha\trir\gamma$; this axiom translates to $\varphi^t=\overline{S^{-1}(A,B)\cap S^{-1}(B,C)} \cup S^{-1}(A,C)$; to see that $\llbracket \varphi^t \rbf = W$, it suffices to show that $S^{-1}(A,B)\cap S^{-1}(B,C)\subseteq S^{-1}(A,C)$.
        Assume thus $x\in S^{-1}(A,B)\cap S^{-1}(B,C)$ and there is some $y\in W$ such that $xRy$ and $y\in A$. Then, there is some $z\in W$ such that $yS_xz\wedge z\in B$. Since $yS_xz\rightarrow xRy\wedge xRz$, there is some $v\in W$ such that $xRv$ and, by the definition of $S^{-1}$, we have that $zS_xv$ and $v\in C$. Since $yS_xz\wedge zS_xv\rightarrow yS_x v$, we conclude $x\in S^{-1}(A,C)$.
        
        
        
    \end{enumerate}
        We now prove that the rules of \il preserve the property of  derivable formulas evaluating to $W$ under $\sf e$:
    \begin{enumerate}
        \item Modus Ponens: By inductive hypothesis on $\mathbf{IL}\vdash \varphi$ and $\mathbf{IL}\vdash\varphi\rightarrow \psi$, we have $\lb\varphi^t\rbf=W$ and $\lb\overline{\varphi^t}\rbf\cup \lb\psi^t\rbf=W$, so that $\psi^t=W$.
        \item Necessitation: By inductive hypothesis on $\mathbf{IL}\vdash \varphi$, we have $\lb\varphi^t\rbf=W$, so that, for any $x,y\in W$, $xRy\Rightarrow y\in \lb\varphi^t\rbf=W$. Hence we conclude $\lb\nec\varphi^t\rbf=\Rhat(\varphi^t)=W$.
    \end{enumerate}    
\end{proof}

\begin{corollary}\label{cor:deductionThm}
    Let $\mathcal{F}=\langle W,R\{S_x\}_{x\in W}\rangle$ a Veltman frame. If $\il\vdash \varphi\to\psi$
    then, for every valuation ${\sf e}$, $\llbracket\varphi^t\rbf\subseteq\llbracket\psi^t\rbf.$
\end{corollary}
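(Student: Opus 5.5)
This is a direct consequence of Theorem~\ref{thm:provW}, once we check how the translation handles implication. Fix a Veltman frame $\mathcal{F}$ and an arbitrary valuation ${\sf e}$. Assume $\il\vdash\varphi\to\psi$. Theorem~\ref{thm:provW} gives $\llbracket(\varphi\to\psi)^t\rbf=W$.

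The translation is compositional, so the translated implication is evaluated from its parts:
\[
\llbracket(\varphi\to\psi)^t\rbf=\overline{\llbracket\varphi^t\rbf}\cup\llbracket\psi^t\rbf .
\]
This holds because $(\varphi\to\psi)^t:=\overline{(\varphi^t)}\cup(\psi^t)$ by definition. Fixing ${\sf e}$ interprets the set variables $A_p$, and complement and union are then evaluated pointwise in $\wp(W)$. This equality is the only point that needs a remark, and it is immediate from how $\llbracket\cdot\rbf$ is induced by ${\sf e}$.

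Combining the two facts gives $\overline{\llbracket\varphi^t\rbf}\cup\llbracket\psi^t\rbf=W$. Now take any $x\in\llbracket\varphi^t\rbf$. Then $x\notin\overline{\llbracket\varphi^t\rbf}$, so $x\in\llbracket\psi^t\rbf$. Hence $\llbracket\varphi^t\rbf\subseteq\llbracket\psi^t\rbf$, and since ${\sf e}$ was arbitrary, this holds for every valuation.

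There is no real obstacle here. The only care needed is to apply Theorem~\ref{thm:provW} with the same valuation ${\sf e}$ that appears in the conclusion, and that theorem quantifies over all valuations.
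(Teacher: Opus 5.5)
Your proposal is correct and matches the paper's proof. The paper also applies Theorem~\ref{thm:provW} to get $\overline{\llbracket\varphi^t\rbf}\cup\llbracket\psi^t\rbf=W$ and then reads off the inclusion pointwise. Your step ``$x\notin\overline{\llbracket\varphi^t\rbf}$, hence $x\in\llbracket\psi^t\rbf$'' states explicitly what the paper leaves implicit.
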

\begin{proof}
    By Theorem \ref{thm:provW}, we have $\llbracket(\varphi\to\psi)^t\rbf=\overline{\llbracket\varphi^t\rbf}\cup\llbracket\psi^t\rbf=W$. Then, for any $x\in\llbracket \varphi^t\rbf\subseteq W$, since $x\in W$, 
    $x\in\llbracket\psi^t\rbf$. 
    Therefore, 
    $\llbracket\varphi^t\rbf\subseteq\llbracket\psi^t\rbf$.
\end{proof}

\begin{lemma} \label{infforallevaluationsW}
    Let $\mathcal{F}=\langle W,R,\{S_x\}_{ x\in W}\rangle$ be a Veltman Frame, $\varphi\in\form$, and $\sf e$ a valuation on \Frame F.  If, for every ultrafilter $f\in\textsf{U}(W)$, $\lb \varphi^t\rbf \in f$, then $\lb\varphi^t\rbf = \lb\nec\varphi^t\rbf =W$.
\end{lemma}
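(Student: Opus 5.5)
The argument runs by contraposition through principal ultrafilters, followed by a direct computation for the boxed set. Write $X := \lb \varphi^t\rbf \subseteq W$. The hypothesis is that $X \in f$ for every $f \in \textsf{U}(W)$.

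\emph{First}, I show $X = W$. Suppose not, and pick some $w \in W \setminus X$. Consider the principal ultrafilter $\Pi_w = \{ Y \subseteq W \mid w \in Y\}$ from Definition~\ref{def:ultrafilter}. It is indeed an ultrafilter:
\begin{itemize}
\item properness holds because $w \notin \varnothing$;
\item upward closure and intersection closure are immediate;
\item totality holds because, for any $A \subseteq W$, either $w \in A$ or $w \in \overline{A}$.
\end{itemize}
By hypothesis $X \in \Pi_w$, that is, $w \in X$, a contradiction. Hence $X = W$. No use of the Ultrafilter Principle (Lemma~\ref{lemma:ultrafilterPrinciple}) is needed, since principal ultrafilters exist outright for every world.

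\emph{Second}, I show that the set of the boxed formula is also $W$. By the clause for $\nec$ in the translation, $\lb (\nec\varphi)^t\rbf = \Rhat(\lb\varphi^t\rbf) = \Rhat(W)$. Unfolding the definition of $\Rhat$, a point $x$ lies in $\Rhat(W)$ iff every $R$-successor of $x$ lies in $W$, which holds vacuously. So $\Rhat(W) = W$. This is the same computation as in the necessitation case of Theorem~\ref{thm:provW}.

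I expect no real obstacle. The only point requiring care is notational: the statement writes $\lb\nec\varphi^t\rbf$, which I read as $\lb(\nec\varphi)^t\rbf = \Rhat(\lb\varphi^t\rbf)$. With that reading, both equalities follow from the two steps above.
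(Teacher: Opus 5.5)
Your proposal is correct and follows essentially the same route as the paper: the paper's one-line justification $\bigcap\textsf{U}(W)=\{W\}$ is exactly what your principal-ultrafilter argument proves, and both then conclude with $\Rhat(W)=W$. Your version simply makes that first step explicit by checking that each $\Pi_w$ is an ultrafilter, so no appeal to the Ultrafilter Principle is needed.
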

\begin{proof}
        Since ${\bigcap}{\textsf{U}(W)}=\{W\}$, we have $\lb\varphi^t\rbf \in \{W\}$, that is, $        \lb\varphi^t\rbf=W$. But $\lb\nec\varphi^t\rbf=\Rhat(\lb\varphi^t\rbf)=\Rhat(W)=W$, and we are done.
\end{proof}


Since any \il-tautology evaluates to $W$ for any {\sf e} on any frame \Frame F, we can use \il reasoning in the ultrafilter setting, as stated by the following
\begin{lemma}\label{lemma:implication}
    Given a Veltman frame $\mathcal{F}=\la W, R, \{S_x\}_{x\in W} \ra$ and a valuation ${\sf e}$ over $W$, if $\Gamma \vdash \varphi$ and $\lb\Gamma^t\rbf \subseteq f$, then $\lb\varphi^t\rbf \in f$.
\end{lemma}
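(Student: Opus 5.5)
Since $\Gamma$ is finite (the paper's convention for $\Gamma\vdash_\il\varphi$), write $\Gamma=\{\gamma_1,\dots,\gamma_n\}$. The plan is to reduce the hypothetical derivation to a theorem of \il, pass to sets with Corollary~\ref{cor:deductionThm}, and then use the filter properties of $f$.

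First, from $\Gamma\vdash_\il\varphi$ the deduction theorem gives $\il\vdash\bigwedge_{i\le n}\gamma_i\to\varphi$. It holds for \il because necessitation is restricted to theorems in derivations from hypotheses, so the usual induction on derivations goes through: axioms and theorems are handled by weakening, hypotheses are trivial, and modus ponens is handled propositionally. When $n=0$ the conjunction is read as $\top$, and then $\il\vdash\varphi$.

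Next, apply Corollary~\ref{cor:deductionThm} to this implication under the fixed valuation ${\sf e}$. This yields $\lb(\bigwedge_i\gamma_i)^t\rbf\subseteq\lb\varphi^t\rbf$. The translation commutes with the Boolean connectives: $(\alpha\wedge\beta)^t=\alpha^t\cap\beta^t$, which one checks by unfolding $\neg(\alpha\to\neg\beta)$ into complements and unions. Hence $\lb(\bigwedge_i\gamma_i)^t\rbf=\bigcap_i\lb\gamma_i^t\rbf$, and $\top^t=\overline{\varnothing}\cup\varnothing=W$ covers the empty case.

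Finally, the hypothesis $\lb\Gamma^t\rbf\subseteq f$ says each $\lb\gamma_i^t\rbf\in f$. By intersection closure (Definition~\ref{def:ultrafilter}) the finite intersection $\bigcap_i\lb\gamma_i^t\rbf$ lies in $f$; when $n=0$ we use instead that $W\in f$, which follows from upward closure and the nonemptiness of $f$ guaranteed by totality. Upward closure then gives $\lb\varphi^t\rbf\in f$.

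There is no real obstacle here. The only points needing care are justifying the deduction theorem for \il with the restricted necessitation rule, and checking that the translation of the defined connective $\wedge$ is genuinely intersection. The ultrafilter part uses only the filter axioms.
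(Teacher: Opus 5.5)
Your proposal is correct and follows essentially the same route as the paper. Both use the deduction theorem to get $\il\vdash\bigwedge_i\gamma_i\to\varphi$, then soundness of the translation to obtain $\bigcap_i\lb\gamma_i^t\rbf\subseteq\lb\varphi^t\rbf$, and finally intersection and upward closure of $f$. The only differences are that you cite Corollary~\ref{cor:deductionThm} where the paper cites Theorem~\ref{thm:provW} and unfolds it inline, and that your explicit checks of $(\alpha\wedge\beta)^t=\alpha^t\cap\beta^t$ and of the case $\Gamma=\varnothing$ fill in details the paper leaves implicit.
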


\begin{proof}
   Let $\Gamma\vdash\varphi$ and {\sf e} be such that $\lb \Gamma^t\rbf\subseteq f$.
   By the deduction theorem, we have that   $\il\vdash\bigwedge_i(\gamma_i)\rightarrow \varphi$ for $\gamma_i\in\Gamma$; by Theorem~\ref{thm:provW}, we thus have $\lb \bigwedge_i(\gamma_i)\rightarrow \varphi\rbf=W$, that is: $\lb \overline{\bigwedge_i(\gamma_i)^t}\cup \varphi^t\rbf=W$. Therefore, $\lb\bigwedge_i(\gamma_i)^t\rbf=\bigcap(\lb\gamma_i^t\rbf)\subseteq \lb\varphi^t\rbf$.
      Since $f$ is closed under finite intersections and, by assumption,
    $\lb\gamma_i\rbf\in f$ for each $\gamma_i\in\Gamma$, we obtain $\bigcap(\lb \gamma_i^t\rbf)\in f$; and since $f$ is closed under supersets, we conclude that $\lb \varphi^t\rbf\in f$.  
\end{proof}

\begin{corollary}\label{theorem:AlgebraicVersionsOfSimpleILfacts}
    The following hold:
    \begin{enumerate}
        \item \label{item:AtrirBandBtoCThenAtrirC:theorem:AlgebraicVersionsOfSimpleILfacts}
        $S^{-1}(X,Y)\subseteq S^{-1}(X,Y\cup Z)$;

         \item  \label{item:necAnecnecA:theorem:AlgebraicVersionsOfSimpleILfacts}
          $\Rhat(A)\subseteq \Rhat(\Rhat(A))$.

          \item $S^{-1}(A,B)\cap S^{-1}(B,C)\subseteq S^{-1}(A,C).$
    \end{enumerate}
\end{corollary}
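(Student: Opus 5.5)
Since the statement is labelled a corollary, the plan is to read each inclusion off Corollary~\ref{cor:deductionThm}. For each item we pick an \il-provable implication $\varphi\to\psi$ whose translation under a suitable valuation ${\sf e}$ is exactly the inclusion wanted. Given arbitrary $X,Y,Z\subseteq W$ (resp.\ $A,B,C$), fix distinct atoms $p,q,r$ and let ${\sf e}$ send $A_p,A_q,A_r$ to $X,Y,Z$ (resp.\ $A,B,C$). Then $\lb p^t\rbf=X$, $\lb (q\vee r)^t\rbf=Y\cup Z$, and so on. The only bookkeeping is to check that the derived connectives translate as expected, namely $(\alpha\vee\beta)^t=\alpha^t\cup\beta^t$ and $(\alpha\wedge\beta)^t=\alpha^t\cap\beta^t$. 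This follows from the clauses for $\to$ and $\bot$ together with $\overline{\overline{U}}=U$.

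For item 1, we use that $\il\vdash p\trir q\to p\trir(q\vee r)$. This holds because $\nec(q\to q\vee r)$ is obtained by necessitation, \play{1} gives $q\trir q\vee r$, and \play{2} then yields the claim. Translating and applying Corollary~\ref{cor:deductionThm} gives $S^{-1}(X,Y)\subseteq S^{-1}(X,Y\cup Z)$.

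For item 2, we use the footnoted theorem $\il\vdash\nec p\to\nec\nec p$, which is derivable from \GL. Its translation is $\Rhat(A)\subseteq\Rhat(\Rhat(A))$.

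For item 3, we use the instance $(p\trir q\wedge q\trir r)\to p\trir r$ of \play{2}. Its translation is $S^{-1}(A,B)\cap S^{-1}(B,C)\subseteq S^{-1}(A,C)$. This was in fact already verified directly in the proof of Theorem~\ref{thm:provW}.

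There is no real obstacle here. The one point needing care is that Corollary~\ref{cor:deductionThm} is stated per valuation, so we must note that every triple of subsets of $W$ is realised by some valuation. This is immediate, because the valuation on set-variables is arbitrary. As a sanity check, items 1 and 2 can also be verified directly from the definitions. For item 1, the existential witness $y\in Y$ lies in $Y\cup Z$. For item 2, if $xRy$ and $yRz$, transitivity gives $xRz$, so $z\in A$.
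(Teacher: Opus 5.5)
Your proposal is correct and follows the paper's route: the paper simply says each inclusion is read off Theorem~\ref{thm:provW} from a suitable \il-theorem, and you supply those theorems explicitly through Corollary~\ref{cor:deductionThm}, which amounts to the same thing. Your instance $p\trir q\to p\trir(q\vee r)$ for item~1 is in fact more careful than the paper's illustrative formula $(\varphi\trir\psi)\wedge(\psi\to\theta)\to(\varphi\trir\theta)$, which is not \il-valid unless $\psi\to\theta$ is replaced by $\nec(\psi\to\theta)$.
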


\begin{proof}
    All items follow easily from Theorem \ref{thm:provW} either directly or indirectly. For example, Item \ref{item:AtrirBandBtoCThenAtrirC:theorem:AlgebraicVersionsOfSimpleILfacts} reflects that $(\varphi\trir \psi)\wedge (\psi \to \theta) \to (\varphi\trir \theta)$.
\end{proof}

Finally, we extend the ultrafilter extensions of frames into ultrafilter extensions of models in a natural way:
\begin{definition}\label{definition:ultrafilterExtensionModel}
    Given a Veltman model $\Frame M=\la \Frame F, \eval\ra$, we define its ultrafilter extension $\ultraFilterCounterpart{\Frame M}$ to be the pair $\la \ultraFilterCounterpart{\Frame F}, \ultraFilterCounterpart{\eval}\ra$ where $\ultraFilterCounterpart{\Frame F}$ is as in Def.~\ref{def:uedef}, and $\ultraFilterCounterpart{\eval}$ is such that for every $\la f,\ra\sigma\ra\ra$ we have $\la f,\la \sigma\ra\ra\in\ultraFilterCounterpart{\eval}(p) \ \Longleftrightarrow \  \eval(p)\in f$.
\end{definition}

\section{Full Labels in Algebras}\label{section:fullLabelsAlgebras}
We translate here the labelling machinery from Sect.~\ref{sec:labelling} to the setup from Sect.~\ref{sec:ultrafilterVeltman}.

\begin{lemma}\label{lemma:assuringThenSuccessors}
    If $f\prec_l g$ then $R^{-1}g \subseteq f$. 
\end{lemma}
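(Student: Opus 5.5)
The plan is to reduce the claim to the dual, box-style formulation and then read it off the definition of $\prec_l$ by taking the finite subfamily of the label to be empty (or trivially small). By Lemma~\ref{lemma:interdef}, $R^{-1}(g)\subseteq f$ is equivalent to $f_{\nec}\subseteq g$. So it suffices to show that for every $A\subseteq W$, if $\Rhat(A)\in f$ then $A\in g$.

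First, I would compute $S^{-1}(\overline A,\varnothing)$ directly from the definition. A world $w$ lies in it iff every $x\in\overline A$ with $wRx$ has some $y\in\varnothing$ with $xS_wy$. This is impossible, so $w$ has no $R$-successor outside $A$. Hence $S^{-1}(\overline A,\varnothing)=\Rhat(A)$; this is the algebraic counterpart of $\nec\alpha\leftrightarrow\neg\alpha\trir\bot$.

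Now assume $\Rhat(A)\in f$. If the quantifier $\forall^{\sf fin}\{S_i\in l\}$ ranges over the empty subfamily, then $\bigcup_i\overline{S_i}=\varnothing$. The hypothesis $S^{-1}(\overline A,\varnothing)=\Rhat(A)\in f$ together with $f\prec_l g$ immediately gives $A,\Rhat A\in g$, and in particular $A\in g$.

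To be safe when the empty subfamily is excluded and $l\neq\varnothing$, pick any $S_1\in l$. By Corollary~\ref{theorem:AlgebraicVersionsOfSimpleILfacts}(1), with $Y=\varnothing$ and $Z=\overline{S_1}$, we get $\Rhat(A)=S^{-1}(\overline A,\varnothing)\subseteq S^{-1}(\overline A,\overline{S_1})$. Upward closure of $f$ then puts the latter set in $f$, and the definition of $\prec_l$ again yields $A\in g$. If $l=\varnothing$, the only finite subfamily is the empty one, so the first case applies.

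If one prefers to avoid Lemma~\ref{lemma:interdef}, the contrapositive can be run by hand. Take $X\in g$ and suppose $R^{-1}(X)\notin f$. By totality, $\overline{R^{-1}(X)}=\Rhat(\overline X)\in f$, so the argument above gives $\overline X\in g$. Then $X\cap\overline X=\varnothing\in g$, contradicting properness.

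The only delicate point is the bookkeeping convention for the empty family of label elements. The monotonicity item of Corollary~\ref{theorem:AlgebraicVersionsOfSimpleILfacts} handles it either way, so I expect no real obstacle.
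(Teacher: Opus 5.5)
Your proof is correct and follows essentially the same route as the paper: reduce via Lemma~\ref{lemma:interdef} to $f_{\nec}\subseteq g$, get $S^{-1}(\overline A,\varnothing)\in f$ from $\Rhat(A)\in f$, and apply the definition of $\prec_l$ with the empty subfamily of the label. The only differences are minor: you check $S^{-1}(\overline A,\varnothing)=\Rhat(A)$ directly from the definition, whereas the paper transfers the fact $\nec p\vdash_\il\neg p\trir\bot$ via Lemma~\ref{lemma:implication}; and your fallback for a convention excluding the empty subfamily is unnecessary, since the paper's own proof uses the empty subfamily.
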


\begin{proof}
    Assume that $f\prec_l g$, for some $f,g\in\textsf{U}(W)$, and $l$ is a proper filter over $W$.
    By Lemma \ref{lemma:interdef}, we
    consider some $A\subseteq W$ such that $\Rhat (A)\in f$. We know that $\nec p\vdash_\il\neg p \trir \bot$, for any $p\in\atom$. Now, define a valuation ${\sf e}$ such that $\lb p_A^t\rbf=A$. Therefore, $\llbracket(\nec p_A)^t\rbf=\Rhat(A)\in f$ and, by Lemma \ref{lemma:implication}, $\llbracket(\neg p_A^t\trir \bot)^t\rbf=S^{-1}(\overline{A},\varnothing)\in f$. Since $f\prec_l g$, we conclude $A, \Rhat(A)\in g$.
\end{proof}

Assuring successors are successors as per Lemma~\ref{lemma:succes}:

\begin{lemma}\label{lemma:assuringAssures}
    If $f\prec_l g$ then $l, \Rhat(l)\subseteq g$.
\end{lemma}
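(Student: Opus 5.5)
The plan is to mirror the proof of Lemma~\ref{lemma:succes}: show that each $S\in l$ and $\Rhat(S)$ lie in $g$ by producing a suitable $A$ for which the hypothesis of $f\prec_l g$ can be checked.

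Fix $S\in l$. In the definition of $f\prec_l g$, take the one-element finite family $\{S\}\subseteq l$ and the set $A:=S$. The premise then reads $S^{-1}(\overline{S},\overline{S})\in f$. This holds for every ultrafilter $f$, because $S^{-1}(\overline{S},\overline{S})=W$: if $w\in W$ and $x\in\overline{S}$ with $wRx$, then $xS_wx$ by reflexivity of $S_w$ on $R[w]$, so $y:=x$ witnesses the existential. Algebraically, this is the translation of the \il-theorem $\neg p\trir\neg p$ under a valuation with $\lb p_S^t\rbf=S$. One can therefore either invoke Theorem~\ref{thm:provW} directly or verify the identity by hand as above.

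Since $W\in f$ by properness and upward closure, the premise is satisfied. The definition of $f\prec_l g$ then yields $S\in g$ and $\Rhat(S)\in g$. As $S\in l$ was arbitrary, we get $l\subseteq g$ and $\Rhat(l):=\{\Rhat(S)\mid S\in l\}\subseteq g$.

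I do not expect a real obstacle. The only point needing care is the reflexivity of $S_w$ on $R[w]$, which must be available for the elements $x\in\overline{S}$ with $wRx$. This is exactly the case, since $S_w$ is reflexive on $R[w]$ by Definition~\ref{def:veltman}. The FIP assumption on $l$ is not needed for this direction.
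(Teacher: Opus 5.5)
Your proposal is correct and follows the paper's proof: the paper likewise takes $A\in l$ with the singleton family $\{A\}$, notes via Theorem~\ref{thm:provW} applied to the \il-theorem $\neg\alpha\trir\neg\alpha$ that $S^{-1}(\overline{A},\overline{A})=W\in f$, and then reads off $A,\Rhat(A)\in g$ from the definition of $f\prec_l g$. Your direct verification via reflexivity of $S_w$ is a harmless alternative to citing the theorem. One small correction: $W\in f$ follows from totality together with properness, not from properness and upward closure alone.
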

\begin{proof}
    Let $A\in l$. Since $\il\vdash \neg\alpha\trir\neg\alpha$, by Lemma \ref{thm:provW}, we have $S^{-1}\big(\overline{A},\overline{A}\big)=W\in f$. Because $f\prec_l g$, we conclude that $A,\Rhat(A)\in g$.
\end{proof}
 We define, from now on, the convention that whenever a function is considered on a domain, it is naturally extended to the powerset of its domain. In this case, $\Rhat$ on $l$ belongs to the powerset of the domain of $\widehat{R^{-1}}$.
\begin{corollary}
   If $f\prec_l g$, then $R^{-1}(l)\subseteq f$. 
\end{corollary}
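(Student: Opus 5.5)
The plan is to combine Lemma~\ref{lemma:assuringAssures} with Lemma~\ref{lemma:assuringThenSuccessors}, after rephrasing the latter in terms of $R^{-1}$. Lemma~\ref{lemma:assuringThenSuccessors} states $R^{-1}g\subseteq f$, with the convention just introduced that $R^{-1}$ is applied pointwise to the members of $g$. So it says: for every $X\in g$, $R^{-1}(X)\in f$.

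The argument is then a short chain. Fix $A\in l$. By Lemma~\ref{lemma:assuringAssures}, $f\prec_l g$ gives $l\subseteq g$, so $A\in g$. By Lemma~\ref{lemma:assuringThenSuccessors}, applied to the same $f\prec_l g$, $R^{-1}(A)\in f$. Since $A\in l$ was arbitrary, $R^{-1}(l)=\{R^{-1}(A)\mid A\in l\}\subseteq f$.

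The only point that needs care is how Lemma~\ref{lemma:assuringThenSuccessors} should be read. Its proof actually establishes $f_{\nec}\subseteq g$: from $\Rhat(A)\in f$ it concludes $A\in g$. One must pass from this to the $R^{-1}$ form through Lemma~\ref{lemma:interdef}. Concretely, suppose $X\in g$ but $R^{-1}(X)\notin f$. By totality, $\overline{R^{-1}(X)}=\Rhat(\overline X)\in f$. The proof of Lemma~\ref{lemma:assuringThenSuccessors} then gives $\overline X\in g$. Together with $X\in g$ this puts $\varnothing\in g$, contradicting properness. I would make this dualisation step explicit in a line, and then the corollary follows.
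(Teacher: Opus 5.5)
Your proof is correct and is essentially the paper's own argument. The paper likewise fixes $A\in l$ and uses totality of $f$ together with Lemma~\ref{lemma:interdef} to turn the alternative $\overline{R^{-1}(A)}\in f$ into $\Rhat(\overline{A})\in f$. It then obtains $\overline{A}\in g$ from Lemma~\ref{lemma:assuringThenSuccessors}, contradicting $A\in g$ from Lemma~\ref{lemma:assuringAssures}. The only difference is that you first isolate the dualised form of Lemma~\ref{lemma:assuringThenSuccessors} and then apply it, whereas the paper inlines that step for $X=A$; the dualisation is in fact already the second half of Lemma~\ref{lemma:interdef}, namely $R^{-1}(g)\subseteq f \Leftrightarrow f_{\nec}\subseteq g$.
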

\begin{proof}
    Since $f$ is an ultrafilter, for any $A\in l$, either $R^{-1}(A)\in f$ or $\overline{R^{-1}(A)}\in f$; that is, by Lemma \ref{lemma:interdef}, either $R^{-1}(A)\in f$ or $\Rhat(\overline{A})\in f$. However, by Lemma \ref{lemma:assuringThenSuccessors}, from the latter case we would have $\overline{A}\in g$, contra Lemma \ref{lemma:assuringAssures} which tells us that $A\in g$. We thus conclude $R^{-1}(A)\in f$.
\end{proof}
\begin{lemma}\label{LTransitivity}
    Let $\Frame F=\langle W,R,\{S_w \mid w\in W\}\rangle$ be a Veltman frame and $f,g,h\in\textsf{U}(W)$. If $f\prec_l g$ and $g \prec_m h$, then $f\prec_l h$.
\end{lemma}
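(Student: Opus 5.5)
Fix $A\subseteq W$ and finitely many $S_1,\dots,S_n\in l$ with $S^{-1}(\overline A,\bigcup_i\overline{S_i})\in f$. We must show $A\in h$ and $\Rhat A\in h$. The plan is to push the information first from $f$ to $g$ using $f\prec_l g$, and then from $g$ to $h$ using only that $h$ is an $R^{\sf ue}$-successor of $g$. This mirrors the Propagation property of the syntactic assuring relation, where the relevant assumption there is $\Delta\prec_\varnothing\Theta$.

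Applying the definition of $f\prec_l g$ to the given $A$ and $S_i$ yields $A\in g$ and $\Rhat A\in g$. Next, since $g\prec_m h$, Lemma~\ref{lemma:assuringThenSuccessors} gives $R^{-1}h\subseteq g$. By Lemma~\ref{lemma:interdef} this is equivalent to $g_{\nec}\subseteq h$, that is, every $Y$ with $\Rhat Y\in g$ lies in $h$. Since $\Rhat A\in g$, we get $A\in h$ immediately.

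It remains to show $\Rhat A\in h$. By the same inclusion $g_{\nec}\subseteq h$, it suffices to have $\Rhat(\Rhat A)\in g$. This follows from $\Rhat A\in g$ together with Corollary~\ref{theorem:AlgebraicVersionsOfSimpleILfacts}, item~\ref{item:necAnecnecA:theorem:AlgebraicVersionsOfSimpleILfacts}, which gives $\Rhat A\subseteq\Rhat\Rhat A$, and upward closure of the ultrafilter $g$. Hence $\Rhat A\in h$, and therefore $f\prec_l h$.

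The only delicate point is the translation of $R^{-1}h\subseteq g$ into $g_{\nec}\subseteq h$. I expect this to be the main (small) obstacle, and would check it directly. Suppose $\Rhat Y\in g$ but $Y\notin h$. Then $\overline Y\in h$, so $R^{-1}(\overline Y)\in g$. But $R^{-1}(\overline Y)=\overline{\Rhat Y}$, which contradicts $\Rhat Y\in g$. The label $m$ plays no role beyond guaranteeing that $h$ is a successor of $g$.
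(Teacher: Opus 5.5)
Your proof is correct and follows the same route as the paper. You get $A,\Rhat A\in g$, upgrade to $\Rhat\Rhat A\in g$ via $\Rhat A\subseteq\Rhat\Rhat A$, and push both into $h$ using $g_{\nec}\subseteq h$, which comes from $g\prec_m h$. Your version is also the more careful one: you treat an arbitrary $A$ with $S^{-1}(\overline A,\bigcup_i\overline{S_i})\in f$, as the definition of $f\prec_l h$ requires, whereas the paper's written proof only considers $A\in l$ via Lemma~\ref{lemma:assuringAssures} and writes $f_{\nec}$ where $g_{\nec}$ is meant.
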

\begin{proof}
    Assume $f\prec_l g$ and $g \prec_m h$. By Lemma \ref{lemma:assuringAssures}, $\Rhat(l)\subseteq g$. Now, let $A\in l$ and ${\sf e}$ such that $\lb p_A^t\rbf=A$. Since $\il\vdash\nec p_A\rightarrow \nec \nec p_A$, by Lemma \ref{cor:deductionThm}, we have that $\Rhat(\Rhat(A))=\lb \nec\nec p_A\rbf\in g$. Since $g\prec_m h$, we know that $f_{\nec}\subseteq h$, and conclude that $A,\Rhat(A)\in h$.
\end{proof}
 
\begin{lemma} \label{RlTransitivity} Let $\Frame F = \langle W,R, \{ S_x \mid x\in W \}\rangle$ be a Veltman frame. Let $\ultraFilterCounterpart{\Frame F}$ be the ultrafilter extension of $\Frame F$ and $f,g\in \ultraFilterCounterpart{W}$. If $\la f,\sigma\ra \ultraFilterCounterpart{R}\la g,\tau\ra$, then $\sigma $ is an initial segment of $\tau$\footnote{Denoted by $\sigma\subset \tau$ from now on.}  and $f\prec_l g$ for $l$ such that $\tau=\sigma^\smallfrown\la l\ra^\smallfrown \tau'$ proper filter over $W$.
\end{lemma}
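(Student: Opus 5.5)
The natural route is induction on the construction of $\ultraFilterCounterpart{R}$ as the transitive closure of $\ultraFilterCounterpart{R}_{\sf one}$. More precisely, we induct on the length $n\geq 1$ of a chain
\[
\la f,\sigma\ra=\la f_0,\sigma_0\ra \ultraFilterCounterpart{R}_{\sf one} \la f_1,\sigma_1\ra \ultraFilterCounterpart{R}_{\sf one}\cdots \ultraFilterCounterpart{R}_{\sf one}\la f_n,\sigma_n\ra=\la g,\tau\ra
\]
witnessing $\la f,\sigma\ra\ultraFilterCounterpart{R}\la g,\tau\ra$. We prove simultaneously two claims. First, $\sigma\subset\tau$, and in fact $\tau=\sigma^\smallfrown\la l_1,\dots,l_n\ra$ with $f_{i-1}\prec_{l_i} f_i$. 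Second, $f\prec_{l_1} g$. Here $l=l_1$ is exactly the entry $(\tau)_{|\sigma|}$, so this is the $l$ referred to in the statement.

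\emph{Base case $n=1$.} By Definition~\ref{def:uedef}.\ref{sec:ultrafilterVeltman2}, $\ultraFilterCounterpart{R}_{\sf one}$ only relates $\la f,\sigma\ra$ to pairs of the form $\la g,\sigma^\smallfrown\la l\ra\ra$ with $f\prec_l g$. So $\sigma\subset\tau=\sigma^\smallfrown\la l\ra$, and $f\prec_l g$ holds by definition.

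\emph{Inductive step.} Suppose $\la f,\sigma\ra\ultraFilterCounterpart{R}\la h,\rho\ra\ultraFilterCounterpart{R}_{\sf one}\la g,\tau\ra$. By the induction hypothesis, $\rho=\sigma^\smallfrown\la l\ra^\smallfrown\rho'$ and $f\prec_l h$. By the base case, $\tau=\rho^\smallfrown\la m\ra$ and $h\prec_m g$. Hence $\tau=\sigma^\smallfrown\la l\ra^\smallfrown(\rho'^\smallfrown\la m\ra)$, so $\sigma\subset\tau$ and the entry at position $|\sigma|$ is still $l$. Lemma~\ref{LTransitivity}, applied to $f\prec_l h$ and $h\prec_m g$, gives $f\prec_l g$. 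This closes the induction.

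The remaining clause says $l$ is a proper filter. This comes from how labels enter $\ultraFilterCounterpart{W}$: $\prec_l$ is only defined for $l\subseteq\wp(W)\setminus\{\varnothing\}$ with the FIP. I would either read the statement as saying that $l$ is a legitimate label, or, if a filter is required, replace $l$ by the filter it generates. The FIP makes that filter proper, and one checks that $\prec_l$ is unchanged. For this check, any finite family $S_i$ from the generated filter contains finite intersections $T_j$ of elements of $l$ with $T_j\subseteq S_i$. By Corollary~\ref{theorem:AlgebraicVersionsOfSimpleILfacts}(1), $S^{-1}(\overline A,\bigcup\overline{S_i})$ is then contained in $S^{-1}(\overline A,\bigcup_k\overline{L_k})$ for finitely many $L_k\in l$, because $\overline{T_j}=\bigcup\overline{L}$.

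\emph{Expected obstacle.} The only delicate point is bookkeeping: the transitive closure must not "forget" which label sits at position $|\sigma|$. This is handled by proving the stronger invariant that every one-step move appends exactly one entry and never changes earlier entries. The genuinely semantic content is entirely carried by Lemma~\ref{LTransitivity}.
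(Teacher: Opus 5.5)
Your proof is correct and follows the paper's own argument. The paper also takes an $\ultraFilterCounterpart{R}_{\sf one}$-chain from $\la f,\sigma\ra$ to $\la g,\tau\ra$, reads off $\sigma\subset\tau$ and $f\prec_l h_1$ from the first step, and propagates this along the chain using Lemma~\ref{LTransitivity}, leaving implicit the induction you spell out. The paper never addresses the ``proper filter'' clause, so your point is a fair clarification rather than a gap: the clause holds literally only if labels in $\ultraFilterCounterpart{W}$ are restricted to proper filters, because passing to the generated filter (Lemma~\ref{lemma:filtergeneratedbyaFip}) preserves $\prec$ but does not make the fixed entry $(\tau)_{|\sigma|}$ itself a filter.
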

\begin{proof}
        By definition,  $\sigma \subset \tau$. By Definition \ref{def:uedef} there is a chain of worlds $\la h_1,\sigma\smallfrown\la l\ra\ra\ultraFilterCounterpart{R}_{\sf one}...\ultraFilterCounterpart{R}_{\sf one} \la g, \tau\ra$. Then we can apply Lemma \ref{LTransitivity} to obtain that $f\prec_l g$ as required.
\end{proof}
The following reflects Lemma 3.7 in \cite{GorisBilkovaJoostenMikec:2020:ArXivLabels} about consistency formulas in labels at the ultrafilter setup:


\begin{lemma}\label{lemma:closureUnderRhat}
    Let $\Frame F=\la W, R, \{S_x\}_{x\in W}\ra$ be a Veltman frame, $f\in\textsf{U}(W)$, and $l$ be a proper filter over $W$. Let $
    B_f\coloneqq \{  A\subseteq W \mid \exists^{\mathrm{fin}}\{S_i\in l\}\Big(S^{-1}\big(\overline{A},\bigcup_i\overline{S_i}\big)\in f \Big)\}
    $.
    Then, if $C\in B_f$,
    $\Rhat(C)\in B_f.$
\end{lemma}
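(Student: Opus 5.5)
Given $C\in B_f$, fix finitely many $S_1,\dots,S_n\in l$ with $S^{-1}(\overline C,\bigcup_i\overline{S_i})\in f$. The plan is to show that the \emph{same} finite family witnesses $\Rhat(C)\in B_f$, and that
\[
S^{-1}\big(\overline{C},\textstyle\bigcup_i\overline{S_i}\big)\subseteq S^{-1}\big(\overline{\Rhat(C)},\textstyle\bigcup_i\overline{S_i}\cup\overline{\Rhat(S_1\cap\dots\cap S_n)}\big).
\]
Since $l$ is a filter, $S_0:=\Rhat(S_1\cap\dots\cap S_n)$ is not necessarily in $l$. So it is cleaner to translate into a syntactic \il-fact and apply Lemma~\ref{lemma:implication}. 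Choose a valuation ${\sf e}$ with $\lb p_C\rbf=C$ and $\lb q_i\rbf=S_i$, and put $\sigma:=\bigwedge_i q_i$. The relevant \il-fact is the labelling principle from \cite{GorisBilkovaJoostenMikec:2020:ArXivLabels} behind Lemma~3.7 there:
\[
\il\vdash\Big(\neg p\trir \bigvee_i\neg q_i\Big)\to\Big(\neg\nec p\trir\bigvee_i\neg q_i\Big).
\]

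To justify this, I would argue semantically via the completeness of \il with respect to Veltman models, or directly by Theorem~\ref{thm:provW}-style set reasoning. Take $x\in S^{-1}(\overline C,\bigcup_i\overline{S_i})$ and $y$ with $xRy$ and $y\notin\Rhat(C)$. By converse well-foundedness, pick an $R$-maximal $z$ with $yRz$ and $z\notin C$ (as in the \GL case of Theorem~\ref{thm:provW}); then $xRz$. By assumption there is $v\in\bigcup_i\overline{S_i}$ with $zS_xv$. Since $yRz$ and both $y$ and $z$ lie in $R[x]$, the frame condition $R\cap R[x]^2\subseteq S_x$ gives $yS_xz$, and transitivity gives $yS_xv$. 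Hence $x\in S^{-1}(\overline{\Rhat(C)},\bigcup_i\overline{S_i})$. This is in fact a direct set-theoretic inclusion, which avoids \il derivations altogether. It needs only transitivity of $R$, the inclusion $R\cap R[x]^2\subseteq S_x$, transitivity of $S_x$, and converse well-foundedness to locate $z$. Actually, maximality of $z$ is not even needed: any $z$ with $yRz$ and $z\notin C$ works, and such a $z$ exists by the definition of $y\notin\Rhat(C)$.

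Finally, since $f$ is upward closed and the inclusion above holds, $S^{-1}(\overline{\Rhat(C)},\bigcup_i\overline{S_i})\in f$. Thus $\Rhat(C)\in B_f$, witnessed by the same $S_i\in l$. The only point needing care, and the step I would double-check, is that $S_x$ contains $R$ restricted to $R[x]$, which is exactly the Veltman frame condition in Definition~\ref{def:veltman}. Beyond that, no real obstacle is expected.
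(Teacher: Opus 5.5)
Your final set-theoretic argument is correct and essentially matches the paper. Both keep the same finite witnesses $S_i$ and rest on the inclusion $S^{-1}(\overline C,\bigcup_i\overline{S_i})\subseteq S^{-1}(\overline{\Rhat(C)},\bigcup_i\overline{S_i})$. The paper derives that inclusion from $\pos\neg p_C\trir\neg p_C$ together with axiom \play{2} via Corollary~\ref{cor:deductionThm}, while you check it directly from the frame properties that validate those axioms. You can drop the opening detour through $\Rhat(S_1\cap\dots\cap S_n)$ and the syntactic principle, since the final argument does not use them.
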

\begin{proof}
    Assume $C\in B_f=\{  A\subseteq W \mid \exists^{\mathrm{fin}}\{S_i\in l\}\Big(S^{-1}\big(\overline{A},\bigcup_i\overline{S_i}\big)\in f \Big)\}$. Then, for some $\{S_i\}\underset{\textit{fin}}{\subseteq}l$, we have that $S^{-1}(\overline{C},\bigcup_i \overline{S_i})\in f$. We need to show that there exists $\{S_j\}\underset{\textit{fin}}{\subseteq}l$ $S^{-1}(\overline{\Rhat(C)},\bigcup_j \overline{S_j})\in f$. Define then ${\sf e}$ such that $\lb p_C^t\rbf=C$ and $\lb p_{S_k}^t\rbf=S_k$ for each $S_k\in l$. By axiom \play{2}, $\il\vdash (\pos\neg p_C\trir \neg p_C \wedge\neg p_C\trir \bigvee_i(\neg p_{S_i}))\rightarrow \pos\neg p_C\trir \bigvee_i(\neg p_{S_i})$. Since $S^{-1}(\overline{C},\bigcup_i(\overline{S_i}))\in f$ and $\lb  \pos\neg p_C^t\trir \neg p_C^t\rbf=W\in f$, we have $\lb \neg p_C^t\trir \bigvee_i(\neg p_{S_i}^t)\wedge \pos\neg p_C^t\trir \neg p_C^t\rbf \in f$ . By Corollary \ref{cor:deductionThm}, we conclude $\lb \pos\neg p_C^t\trir \bigvee_i(\neg p_{S_i})^t\rbf=S^{-1}(\overline{\Rhat(C)},\bigcup_i\overline{S_i})\in f$. 
\end{proof}
    
\begin{lemma}\label{fip}
    Let $\Frame F=\la W, R, \{S_x\}_{x\in W}\ra$ be a Veltman frame,  $f\in\textsf{U}(W)$, and $l$ be a proper filter over $W$. The set
    \(
    \{  A\subseteq W \mid \exists^{\mathrm{fin}}\{S_i\in l\}\Big(S^{-1}\big(\overline{A},\bigcup_i\overline{S_i}\big)\in f \Big)\}
    \)
    is closed under intersections.
\end{lemma}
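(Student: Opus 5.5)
We show that $B_f$ (the set from Lemma~\ref{lemma:closureUnderRhat}) is closed under intersections by mirroring the familiar \il-fact that if $\neg\alpha\trir\chi$ and $\neg\beta\trir\chi'$, then $\neg(\alpha\wedge\beta)\trir\chi\vee\chi'$. Here $\chi,\chi'$ will be the disjunctions of the negated label formulas.

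Fix $C,D\in B_f$. Choose finite families $\{S_i\}_{i}\subseteq l$ and $\{T_j\}_j\subseteq l$ with
\[
S^{-1}\Big(\overline{C},\bigcup_i\overline{S_i}\Big)\in f \quad\text{and}\quad S^{-1}\Big(\overline{D},\bigcup_j\overline{T_j}\Big)\in f.
\]
The combined family $\{S_i\}_i\cup\{T_j\}_j$ is again a finite subset of $l$. Its union of complements $U:=\bigcup_i\overline{S_i}\cup\bigcup_j\overline{T_j}$ will serve as the second argument for $\overline{C\cap D}=\overline C\cup\overline D$. So the goal is
\[
S^{-1}(\overline{C}\cup\overline{D},\,U)\in f.
\]

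First, by Corollary~\ref{theorem:AlgebraicVersionsOfSimpleILfacts}(1) and monotonicity in the second argument, we may enlarge each target set to $U$. This gives $S^{-1}(\overline C,U)\in f$ and $S^{-1}(\overline D,U)\in f$, using upward closure of $f$.

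Second, we need the algebraic counterpart of axiom \play{3}, namely $S^{-1}(X,Z)\cap S^{-1}(Y,Z)\subseteq S^{-1}(X\cup Y,Z)$. There are two routes.
\begin{itemize}
\item Directly: if $w$ is in both sets and $wRx$ with $x\in X\cup Y$, then split on whether $x\in X$ or $x\in Y$ and obtain the required $y\in Z$ with $xS_wy$.
\item Via the paper's method: pick a valuation ${\sf e}$ with $\lb p_C^t\rbf=C$, $\lb p_D^t\rbf=D$, $\lb p_{S_i}^t\rbf=S_i$ and $\lb p_{T_j}^t\rbf=T_j$. Then apply Lemma~\ref{lemma:implication} to the \il-derivation $\neg p_C\trir\theta,\ \neg p_D\trir\theta\vdash\neg(p_C\wedge p_D)\trir\theta$, where $\theta=\bigvee_i\neg p_{S_i}\vee\bigvee_j\neg p_{T_j}$.
\end{itemize}

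Third, closure of $f$ under intersections puts $S^{-1}(\overline C,U)\cap S^{-1}(\overline D,U)$ in $f$. Upward closure then yields $S^{-1}(\overline{C\cap D},U)\in f$, so $C\cap D\in B_f$.

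There is no real obstacle. The only point needing care is that both witnesses must be merged into one common finite subfamily of $l$ before applying \play{3}. Disjunction is monotone in the second argument, and the first step provides exactly that. The argument does not even use that $l$ is a filter, only that finite unions of finite subfamilies are again finite subfamilies.
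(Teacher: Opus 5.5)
Your proposal is correct and follows essentially the same route as the paper. You merge the two finite subfamilies of $l$, enlarge both second arguments to the common union via Corollary~\ref{theorem:AlgebraicVersionsOfSimpleILfacts}(1) and upward closure of $f$, and then use the algebraic form of \play{3} to get $S^{-1}(\overline{C}\cup\overline{D},U)\in f$. The paper does this last step through a valuation and Lemma~\ref{lemma:implication}; your direct set-theoretic check of $S^{-1}(X,Z)\cap S^{-1}(Y,Z)\subseteq S^{-1}(X\cup Y,Z)$ is an equally valid and slightly more elementary substitute.
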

\begin{proof}
     Assume $C,D\in \{  A\subseteq W \mid \exists^{\mathrm{fin}}\{S_i\in l\}\Big(S^{-1}\big(\overline{A},\bigcup_i\overline{S_i}\big)\in f \Big)\}$. 
     Then, for some sets $S^C\underset{\textit{fin}}{\subseteq}l$ and $S^D\underset{\textit{fin}}{\subseteq}l$, we have $S^{-1}(\overline{C},\bigcup_i \overline{S^C_i})\in f$ and $S^{-1}(\overline{D},\bigcup_j \overline{S^D_j})\in f$. 
     By Corollary \ref{theorem:AlgebraicVersionsOfSimpleILfacts}.\ref{item:AtrirBandBtoCThenAtrirC:theorem:AlgebraicVersionsOfSimpleILfacts}, $S^{-1}(X,Y)\subseteq S^{-1}(X,Y\cup Z)$. Since $f$ is closed under supersets, both $S^{-1}(\overline{C},\bigcup_{i,j} \overline{S^{C,D}_{i,j}})\in f$ and $S^{-1}(\overline{D}, \bigcup_{i,j} \overline{S^{C,D}_{i,j}})\in f$. 
     Define then ${\sf e}$ such that $\lb (p_C^t)\rbf=C$, $\lb p_D^t\rbf=D$, and $\lb p_{S_s}\rbf=S_k$ for each $S_k\in l$. By this definition of $\sf e$, both $\lb \neg p_C^t\trir \bigvee_{i,j} (\neg p_{S^{C,D}_{i,j}}^t)\rbf=S^{-1}(\overline{C},\bigcup_{i,j} \overline{S^{C,D}_{i,j}})$ and $\lb \neg p_D^t\trir \bigvee_{i,j} (\neg p_{S^{C,D}_{i,j}}^t)\rbf=S^{-1}(\overline{D},\bigcup_{i,j} \overline{S^{C,D}_{i,j}})$.
    
    Since $\il\vdash\Big(\neg p_C\trir \bigvee_{i,j} (\neg p_{S^{C,D}_{i,j}})\Big) \wedge \Big( \neg p_D\trir \bigvee_{i,j} (\neg p_{S^{C,D}_{i,j}})\Big)\rightarrow \Big((\neg p_C\vee \neg p_D)\trir \bigvee_{i,j} (\neg p_{S^{C,D}_{i,j}})\Big)$, by the deduction theorem $\{\neg p_C\trir \bigvee_{i,j} (\neg p_{S^{C,D}_{i,j}}),p_D\trir \bigvee_{i,j} (\neg p_{S^{C,D}_{i,j}})\}\vdash_\il (\neg p_C\vee \neg p_D){\trir} {\bigvee_{i,j}} (\neg p_{S^{C,D}_{i,j}})$. Then, by Lemma $\ref{lemma:implication}$, we then have $\lb (\neg p_C^t\vee \neg p_D^t)\trir \bigvee_{i,j} (\neg p_{S^{C,D}_{i,j}}^t)\rbf\in f$.
    Consequently, $\lb \neg(\neg p_C^t\vee\neg p_D^t)\rbf\in \{  A\subseteq W \mid \exists^{\mathrm{fin}}\{S_i\in l\}\Big(S^{-1}\big(\overline{A},\bigcup_i\overline{S_i}\big)\in f \Big)\}$; that is: $\lb p_C^t\wedge p^t_D\rbf=C\cap D\in f$, and we are done.    
\end{proof}

The above indeed suggests that each labelling lemma has its natural counterpart in the algebraic setting. Various observations from \cite{GorisBilkovaJoostenMikec:2020:ArXivLabels} are reflected in the following.
\begin{lemma}\ Let $\mathcal{F}=\langle W,R\{S_x\}_{x\in W} \rangle$ be a Veltman frame, and consider $f,g\in\textsf{U}(W)$ and $a,b\in\wp(W)$. The following hold:
    \begin{enumerate}
        \item \label{itemFirstBasic}
        $a\subseteq b$ and $f\prec_{b}g$, then $f\prec_{a}g$;
        \item \label{itemSecondBasic}
        $f\prec_a g$ and $R^{-1}h \subseteq g$, then  $f\prec_a h$.
        
\end{enumerate}
\end{lemma}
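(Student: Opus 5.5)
Both items will be proved by unfolding the definition of $f\prec_l g$. In each case the hypothesis is fed into the universally quantified condition, and nothing beyond the earlier lemmas on $\Rhat$ is needed.

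For item~\ref{itemFirstBasic}, fix $A\subseteq W$ and a finite family $\{S_i\}\subseteq a$ with $S^{-1}(\overline A,\bigcup_i\overline{S_i})\in f$. Since $a\subseteq b$, the same finite family is a finite subfamily of $b$. The defining condition of $f\prec_b g$ then applies verbatim and yields $A,\Rhat(A)\in g$. This is exactly the condition required for $f\prec_a g$. The argument is the algebraic counterpart of the Restriction property for $\prec_S$: shrinking the label shrinks the set of premises one must check.

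For item~\ref{itemSecondBasic} (Propagation), fix $A\subseteq W$ and a finite family $\{S_i\}\subseteq a$ with $S^{-1}(\overline A,\bigcup_i\overline{S_i})\in f$. From $f\prec_a g$ we obtain $A\in g$ and $\Rhat(A)\in g$.
\begin{itemize}
\item To get $A\in h$, we use $\Rhat(A)\in g$. By Lemma~\ref{lemma:interdef}, the hypothesis $R^{-1}h\subseteq g$ is equivalent to $g_{\nec}\subseteq h$, and $\Rhat(A)\in g$ says precisely that $A\in g_{\nec}$.
\item To get $\Rhat(A)\in h$, we first need $\Rhat(\Rhat(A))\in g$. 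This follows from Corollary~\ref{theorem:AlgebraicVersionsOfSimpleILfacts}.\ref{item:necAnecnecA:theorem:AlgebraicVersionsOfSimpleILfacts}, which gives $\Rhat(A)\subseteq\Rhat(\Rhat(A))$, together with upward closure of $g$. Hence $\Rhat(A)\in g_{\nec}\subseteq h$.
\end{itemize}
This establishes $f\prec_a h$.

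The only point requiring care is the second bullet of item~\ref{itemSecondBasic}. Membership $A\in g$ alone does not propagate to $h$; what propagates is $\Rhat(A)\in g$. So one must pass through $\Rhat\Rhat(A)$, which is the algebraic form of the transitivity axiom $\nec p\to\nec\nec p$. I also need the hypothesis $R^{-1}h\subseteq g$ to be read as the lemma's $R^{-1}(g)\subseteq f$ with $f:=g$ and $g:=h$, so that it becomes $g_{\nec}\subseteq h$; this is how Lemma~\ref{lemma:interdef} is used in Lemma~\ref{lemma:assuringThenSuccessors}.
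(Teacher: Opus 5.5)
Your proposal is correct and follows essentially the same route as the paper. Item~1 rests on the observation that finite subfamilies of $a$ are finite subfamilies of $b$. Item~2 converts $R^{-1}h\subseteq g$ into $g_{\nec}\subseteq h$ via Lemma~\ref{lemma:interdef} and uses $\Rhat(A)\subseteq\Rhat(\Rhat(A))$ to move both $A$ and $\Rhat(A)$ into $h$; you also spell out the step the paper leaves implicit, that $A\in h$ comes from $\Rhat(A)\in g$, i.e.\ $A\in g_{\nec}$.
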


\begin{proof}
    To prove \ref{itemFirstBasic}, we assume that $a\subseteq b$ and $f\prec_b g$. Then, 
        \(
        \forall A\subseteq W\forall^{\mathrm{fin}}\{S_i\in b\}\Big(S^{-1}\big(\overline{A},\bigcup_i\overline{S_i}\big)\in f \Rightarrow A,\widehat{R^{-1}}A\in g\Big).
        \)
        We only need to observe that the finite choices of elements in $a$ are finite choices of elements in $b$ since $a\subseteq b$.

        To prove \ref{itemSecondBasic}, we assume that $f\prec_a g$ and $R^{-1}h\subseteq g$. Then, 
        \(\forall A\subseteq W\forall^{\mathrm{fin}}\{S_i\in b\}\Big(S^{-1}\big(\overline{A},\bigcup_i\overline{S_i}\big)\in f \Rightarrow A,\widehat{R^{-1}}A\in g\Big).\)
        Also, by Lemma \ref{lemma:interdef}, $g_{\nec}\subseteq h$. To prove that $f\prec_a h$, pick an arbitrary $A\subseteq W$ and an arbitrary finite choice of $S_i\in a$. Now, suppose that $S^{-1}\big(\overline{A},\bigcup_i\overline{S_i}\big)\in f$. Since $f\prec_a g$, we have that $A,\Rhat(A)\in g$. It suffices to show that $A,\Rhat(A)\in h$. On the one hand, since $g_{\nec}\subseteq h$ we derive that $A\in h$. On the other hand, by Lemma \ref{theorem:AlgebraicVersionsOfSimpleILfacts}.\ref{item:necAnecnecA:theorem:AlgebraicVersionsOfSimpleILfacts}, $\Rhat(A)\subseteq \Rhat(\Rhat(A))$ and since $\Rhat(A)\in g$ and $g$ is an ultrafilter, we know that $\Rhat(\Rhat(A))\in g$. Therefore, $\Rhat(A)\in g_{\nec}$ and, given that $g_{\nec}\subseteq h$, $\Rhat(A)\in h$.
\end{proof}
\begin{lemma}\label{labelextension} \ Let $\mathcal{F}=\langle W,R,\{S_w\}_{w\in W} \rangle$ be a Veltman frame, and consider $f,g\in\textsf{U}(W)$ and $a,b,y\in\wp(W)$. The following hold:
    \begin{enumerate}
        \item \label{labelextension1}
        if $f\prec_{a}g$, and if $x\subseteq y$ for some $x\in a$ then $f\prec_{a\,\cup\, \{y\}}g$;
        \item \label{labelextension2}
        if $f\prec_a g$, then $f\prec_{a \,\cup\,\Rhat(a) } g$, where $\Rhat(a)\coloneqq\{\Rhat(S)\mid S\in a\}$.
\end{enumerate}
\end{lemma}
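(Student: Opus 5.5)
Both items follow the same pattern: unfold $f\prec_{a'} g$ for the enlarged label $a'$, take an arbitrary $A\subseteq W$ and a finite family $\{S_i\}\subseteq a'$ with $S^{-1}(\overline A,\bigcup_i\overline{S_i})\in f$, and exhibit a finite family $\{S'_j\}\subseteq a$ with $S^{-1}(\overline A,\bigcup_j\overline{S'_j})\in f$. The hypothesis $f\prec_a g$ then gives $A,\Rhat(A)\in g$. So in each case the task is to replace every new label element by old ones without shrinking the $S^{-1}$-set, and then use upward closure of $f$.

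For item~\ref{labelextension1}, split the finite family into members $S_i\in a$ and possibly the new element $y$. If $y$ occurs, use the given $x\in a$ with $x\subseteq y$, so $\overline y\subseteq\overline x$. Hence $\bigcup_i\overline{S_i}\subseteq \bigcup_{S_i\neq y}\overline{S_i}\cup\overline x$. Monotonicity of $S^{-1}$ in its second argument (Corollary~\ref{theorem:AlgebraicVersionsOfSimpleILfacts}, item~\ref{item:AtrirBandBtoCThenAtrirC:theorem:AlgebraicVersionsOfSimpleILfacts}) gives $S^{-1}(\overline A,\bigcup_i\overline{S_i})\subseteq S^{-1}(\overline A,\bigcup_{S_i\ne y}\overline{S_i}\cup\overline x)$. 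The latter set lies in $f$ by upward closure, and its family is contained in $a$, so we are done.

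For item~\ref{labelextension2}, the new elements have the form $\Rhat(T)$ with $T\in a$. I want $\overline{\Rhat(T)}$ to be ``reducible'' to $\overline T$ inside an interpretability statement. Choose a valuation $\sf e$ with $\lb p_A\rbf=A$, $\lb p_{T_k}\rbf=T_k$ for the old elements $T_k$, and $\lb p_{U_m}\rbf=U_m$ where $\Rhat(U_m)$ are the new ones. Then the hypothesis is $\lb \neg p_A\trir \bigvee_k\neg p_{T_k}\vee\bigvee_m\neg\nec p_{U_m}\rbf\in f$. The key \il-fact is $\pos\neg p_U\trir\neg p_U$, i.e.\ $\neg\nec p_U\trir\neg p_U$, which is provable and already used in Lemma~\ref{lemma:closureUnderRhat}. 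Combining it with \play{3} (disjunction on the left) and \play{1}, we get $\bigvee_k\neg p_{T_k}\vee\bigvee_m\neg\nec p_{U_m}\trir\bigvee_k\neg p_{T_k}\vee\bigvee_m\neg p_{U_m}$ as an \il-theorem. By \play{2} we then obtain $\il\vdash(\neg p_A\trir \text{old}\vee\text{new})\to(\neg p_A\trir\bigvee_k\neg p_{T_k}\vee\bigvee_m\neg p_{U_m})$. Corollary~\ref{cor:deductionThm} and upward closure of $f$ put $S^{-1}(\overline A,\bigcup_k\overline{T_k}\cup\bigcup_m\overline{U_m})\in f$, where all $T_k,U_m\in a$.

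The main obstacle is item~\ref{labelextension2}: one must check that $\neg\nec p\trir\neg p$ is indeed \il-derivable. It follows from \GL, via $\nec(\nec p\to p)\to\nec p$, hence $\pos\neg p\to\pos(\neg p\wedge\nec p)$, together with \play{1} and \play{5}. One must then assemble the disjunctions via \play{3} correctly. Everything else is bookkeeping through Lemma~\ref{lemma:implication}.
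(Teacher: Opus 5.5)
Your proposal is correct and follows the paper's proof: item~1 is the same monotonicity-plus-upward-closure argument. In item~2 you use the same key fact $\pos\alpha\trir\alpha$ to trade $\overline{\Rhat(U)}=R^{-1}(\overline{U})$ for $\overline{U}$ on the right of $\trir$, only packaging the \play{3}/\play{2} bookkeeping into one \il-theorem transferred by Corollary~\ref{cor:deductionThm} instead of the paper's set-level steps. The detour through ${\sf GL}$ is unnecessary, since $\pos\neg p\trir\neg p$ is literally an instance of \play{5}, and $\pos\neg p$ and $\neg\nec p$ both translate to $\overline{\Rhat(U)}$ when $\lb p\rbf=U$.
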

\begin{proof}\
To prove item \ref{labelextension1} we pick an arbitrary $A\subseteq W$ and an arbitrary finite choice of $S_i\in a$. Now, assume that $S^{-1}\big(\overline{A},\bigcup_i\overline{S_i}\cup\overline{y}\big)\in f$. Since $x\subseteq y$, we know that $\overline{y}\subseteq \overline{x}$. By Corollary \ref{theorem:AlgebraicVersionsOfSimpleILfacts} we have that $S^{-1}\big(\overline{A},\bigcup_i\overline{S_i}\cup\overline{y}\big)\subseteq S^{-1}\big(\overline{A},\bigcup_i\overline{S_i}\cup\overline{x}\big)$ and, given that $f$ is an ultrafilter, we derive that $S^{-1}\big(\overline{A},\bigcup_i\overline{S_i}\cup\overline{x}\big)\in f$. Observe that, since $x\in a$, we have a finite choice of $S_j\in a$ consisting of the prior choice plus $x$ and we can write $S^{-1}\big(\overline{A},\bigcup_j\overline{S_j}\big)\in f$. Then, because $f\prec_a g$, we conclude that $A,\Rhat(A)\in g$ as required.\\

    To prove item \ref{labelextension2}, pick an arbitrary $A\subseteq W$ and an arbitrary finite choice of elements $S_i\in a$ and $\Rhat(S_j)\in \Rhat(a)$. Assume that $S^{-1}\big(\overline{A},\bigcup_i\overline{S_i}\cup\bigcup_j\overline{\Rhat(S_j)}\big)\in f$. By Lemma \ref{lemma:interdef}, \begin{equation}\label{eq1}
    S^{-1}\big(\overline{A},\bigcup_i\overline{S_i}\cup\bigcup_jR^{-1}(\overline{S_j})\big)\in f.
     \end{equation} By Theorem \ref{thm:provW}, since $\il\vdash \pos\alpha\trir\alpha$, we know that, for every $S_j$, $S^{-1}\big(R^{-1}(\overline{S_j}),\overline{S_j}\big)=W$ and, given that $f$ is an ultrafilter, $S^{-1}\big(R^{-1}(\overline{S_j}),\overline{S_j}\big)\in f$, for every $S_j$. Also, $\il\vdash \bigvee_i(\alpha_i\trir\beta_i)\to\bigvee_i \alpha_i\trir\bigvee_i \beta_i$, so by Corollary \ref{cor:deductionThm} we deduce that $\bigcup_jS^{-1}\big(R^{-1}(\overline{S_j}),\overline{S_j}\big)\subseteq S^{-1}\big(\bigcup_jR^{-1}(\overline{S_j}),\bigcup_j\overline{S_j}\big)$ which implies that $S^{-1}\big(\bigcup_jR^{-1}(\overline{S_j}),\bigcup_j\overline{S_j}\big)=W$ and, then, 
     \begin{equation}\label{eq2}
     S^{-1}\big(\bigcup_jR^{-1}(\overline{S_j}),\bigcup_j\overline{S_j}\big)\in f\end{equation} 
     Since $\il\vdash\alpha\trir (\beta\vee\gamma)\wedge(\gamma\trir\eta)\to \alpha\trir(\beta\vee\eta)$, by Corollary \ref{cor:deductionThm} we have that
     \[S^{-1}\big(\overline{A},\bigcup_i\overline{S_i}\cup\bigcup_jR^{-1}(\overline{S_j})\big)\cap S^{-1}\big(\bigcup_jR^{-1}(\overline{S_j}),\bigcup_j\overline{S_j}\big)\subseteq S^{-1}\big(\overline{A},\bigcup_i\overline{S_i}\cup\bigcup_j\overline{S_j}\big).\]
     By \ref{eq1} and \ref{eq2} and the fact that $f$ is an ultrafilter, we know that 
     $S^{-1}\big(\overline{A},\bigcup_i\overline{S_i}\cup\bigcup_j\overline{S_j}\big)\in f$.
     Finally, because $f\prec_ag$ we conclude that $A,\Rhat(A)\in g$, as we wanted to show.
\end{proof}

\begin{definition}\label{def:generatedFilter} Given a set $a\subseteq\powerset{W}$ with the finite intersection property, we define $l(a)$ to be the filter generated by $a$ as 
$l(a)\coloneqq \bigcap_{\substack{l' \text{ a filter over } W\\ a \subseteq l'}}l'.$
\end{definition}

The elements of the generated filter of some $a\subseteq\powerset{W}$ are characterized by the elements of $a$.

\begin{lemma}\label{lemma:generatedFilterProperFip}[From~\cite{BlackburnEtAll:2001:ModalLogic}]\label{lemma:charactGenFilter}
    Given some set $a\subseteq\powerset{W}$ we have that
    \[
    S\in l(a)\iff (\exists S_1,\ldots,S_n\in a\ S_1\cap\ldots\cap S_n\subseteq S).
    \]
    \end{lemma}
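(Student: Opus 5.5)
The plan is to show that the collection
\[
L \coloneqq \{S\subseteq W \mid \exists S_1,\ldots,S_n\in a\ (S_1\cap\ldots\cap S_n\subseteq S)\}
\]
coincides with $l(a)$ as given in Definition~\ref{def:generatedFilter}. This means proving the two inclusions $L\subseteq l(a)$ and $l(a)\subseteq L$.

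\textbf{Step 1: $L\subseteq l(a)$.} Let $l'$ be any filter over $W$ with $a\subseteq l'$, and let $S\in L$, so $S_1\cap\ldots\cap S_n\subseteq S$ for some $S_1,\ldots,S_n\in a$.
\begin{itemize}
\item Each $S_k$ lies in $l'$ because $a\subseteq l'$.
\item Intersection closure of $l'$ then gives $S_1\cap\ldots\cap S_n\in l'$.
\item Upward closure gives $S\in l'$.
\end{itemize}
Since $l'$ was arbitrary, $S$ lies in the intersection of all such filters, that is, $S\in l(a)$.

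\textbf{Step 2: $L$ is a filter containing $a$.} This yields $l(a)\subseteq L$ directly, since $l(a)$ is the intersection of all filters containing $a$.
\begin{itemize}
\item $a\subseteq L$: take $n=1$ and $S_1=S$.
\item Upward closure: if $S_1\cap\ldots\cap S_n\subseteq S\subseteq S'$, the same witnesses work for $S'$.
\item Intersection closure: if $S_1\cap\ldots\cap S_n\subseteq S$ and $T_1\cap\ldots\cap T_m\subseteq T$, then the concatenated list satisfies $S_1\cap\ldots\cap S_n\cap T_1\cap\ldots\cap T_m\subseteq S\cap T$.
\item Properness: this is where the FIP hypothesis of Definition~\ref{def:generatedFilter} enters. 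If $\varnothing\in L$, then some finite intersection of members of $a$ would be empty, contradicting FIP.
\end{itemize}

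\textbf{Main obstacle.} There is no real obstacle; the only delicate point is the edge case $a=\varnothing$. For $n=0$ one should read the empty intersection as $W$, so that $W\in L$. I would state this convention explicitly. Alternatively, I would assume $a$ is non-empty, as in Lemma~\ref{lemma:ultrafilterPrinciple}, in which case $W\in L$ follows from upward closure of any single $S_1\in a$.

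One might also worry whether filters must be proper for the intersection defining $l(a)$ to be meaningful. Under FIP, Step 2 shows that at least one proper filter contains $a$, so the intersection is taken over a non-empty family and the characterisation holds.
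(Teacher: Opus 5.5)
Your proof is correct and is the standard argument: the paper gives no proof of its own and defers to Blackburn, de Rijke and Venema, where the characterisation is obtained in the same two steps. One shows that every superset of a finite intersection from $a$ lies in every filter containing $a$, and that this collection is itself a filter containing $a$; your handling of the $n=0$ / $a=\varnothing$ edge case is appropriate.

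One small remark: right after the lemma, the paper applies it to arbitrary $a$, to conclude that $l(a)$ is proper iff $a$ has the FIP, so filters there are not assumed proper. In that reading your properness check is unnecessary, and without the FIP your $L$ is $\wp(W)$, the improper filter, so both inclusions go through verbatim.
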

    Definition \ref{def:generatedFilter} can alos be applied to arbitrary $a\subseteq\powerset{W}$ and then the above lemma shows that $l(a)$ is a proper filter if and only if $a$ has the FIP.  


\begin{lemma}\label{lemma:filtergeneratedbyaFip}
    Let $\veltframe=\la W,R,\{S_w \mid w\in W\}\ra$ be a Veltman frame and consider $f,g\in\textsf{U}(W)$, $a\in\powerset{W}$ and $f\prec_a g$. Then, $f\prec_{l(a)} g$.
\end{lemma}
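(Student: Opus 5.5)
We unfold the definition of $f\prec_{l(a)} g$ and reduce it directly to $f\prec_a g$, using Lemma~\ref{lemma:charactGenFilter} to replace each element of $l(a)$ by a finite intersection of elements of $a$. Fix an arbitrary $A\subseteq W$ and an arbitrary finite choice $T_1,\ldots,T_k\in l(a)$, and assume
\[
S^{-1}\big(\overline{A},\textstyle\bigcup_{j\le k}\overline{T_j}\big)\in f .
\]
We must show $A,\Rhat(A)\in g$.

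By Lemma~\ref{lemma:charactGenFilter}, for each $j\le k$ there are $S^j_1,\ldots,S^j_{n_j}\in a$ with $S^j_1\cap\cdots\cap S^j_{n_j}\subseteq T_j$. Taking complements gives
\[
\overline{T_j}\subseteq \overline{S^j_1}\cup\cdots\cup\overline{S^j_{n_j}} .
\]
Hence $\bigcup_j\overline{T_j}\subseteq\bigcup_{j,i}\overline{S^j_i}$, and the right-hand side is a union over a finite family $\{S^j_i\}_{j,i}$ of elements of $a$.

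The main step is monotonicity of $S^{-1}$ in its second argument. Corollary~\ref{theorem:AlgebraicVersionsOfSimpleILfacts}.\ref{item:AtrirBandBtoCThenAtrirC:theorem:AlgebraicVersionsOfSimpleILfacts} gives $S^{-1}(X,Y)\subseteq S^{-1}(X,Y\cup Z)$. Apply it with $Y=\bigcup_j\overline{T_j}$ and $Z=\bigcup_{j,i}\overline{S^j_i}$; since $Y\subseteq Z$, we have $Y\cup Z=Z$. This yields
\[
S^{-1}\big(\overline{A},\textstyle\bigcup_j\overline{T_j}\big)\subseteq S^{-1}\big(\overline{A},\textstyle\bigcup_{j,i}\overline{S^j_i}\big).
\]
This inclusion can also be checked directly from the definition of $S^{-1}$, since a witness $y\in Y$ also lies in $Z$. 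Upward closure of the ultrafilter $f$ then puts $S^{-1}(\overline{A},\bigcup_{j,i}\overline{S^j_i})$ in $f$.

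Because $\{S^j_i\}$ is a finite subset of $a$, the hypothesis $f\prec_a g$ now gives $A,\Rhat(A)\in g$, as required.

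I expect no real obstacle. The only point needing care is keeping the generated-filter characterization separate from the monotonicity of $S^{-1}$. The hypothesis that $\prec_a$ is defined requires $a$ to have the FIP, so $l(a)$ is a proper filter. Alternatively, the argument is an iteration of Lemma~\ref{labelextension}.\ref{labelextension1} (upward closure) together with closure of labels under finite intersections. However, the latter closure is not yet available for labels, so the direct argument above is preferable.
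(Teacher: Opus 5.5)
Your proof is correct and follows essentially the same route as the paper. Like the paper, you use Lemma~\ref{lemma:charactGenFilter} to cover $\bigcup_j\overline{T_j}$ by the complements of finitely many elements of $a$, transfer membership in $f$ by monotonicity of $S^{-1}$ in its second argument, and then apply $f\prec_a g$. The only difference is that the paper justifies monotonicity through the \il-theorem $(\beta\to\gamma)\to(\alpha\trir\beta\to\alpha\trir\gamma)$ and Corollary~\ref{cor:deductionThm}, while you cite Corollary~\ref{theorem:AlgebraicVersionsOfSimpleILfacts} or check the definition of $S^{-1}$ directly.
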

\begin{proof}
By Lemma~\ref{lemma:charactGenFilter}, for each $i\in\{1,\ldots,n\}$, there exists a finite subset $F_i\subseteq a$ such that $\bigcap F_i\subseteq S_i$. Equivalently, for each $i\in\{1,\ldots,n\}$, we have $\overline{S_i}\subseteq\bigcup_{S\in F_i}\overline{S}$. By letting $F:=\bigcup_{i=1}^{n}F_i$, we have a finite subset $F\subseteq a$ such that $\bigcup_{i=1}^n\overline{S_i}\subseteq\bigcup_{S\in F}\overline{S}$. Now, consider some $S^{-1}\big(\overline{A},\bigcup_{i=1}^n\overline{S_i}\big)\in f$. Because $\il \vdash (\beta\to \gamma)\to (\alpha\trir \beta\to \alpha\trir \gamma)$, by Corollary~\ref{cor:deductionThm} we have that $S^{-1}\big(\overline{A},\bigcup_{i=1}^n\overline{S_i}\big)\subseteq S^{-1}\big(\overline{A},\bigcup_{S\in F}\overline{S}\big)$\footnote{We note that given $A,B\in\wp(W)$, then $ A\subseteq B\iff \overline{A}\cup B= W$}. Since $f$ is a filter, and therefore upward closed, we derive that $S^{-1}\big(\overline{A},\bigcup_{S\in F}\overline{S}\big)\in f$. Finally, since $F\subseteq a$ is finite and $f\prec_a g$, we have that $A,\Rhat(A)\in g$, and we are done.
\end{proof}

\section{Elementary Equivalence and Modal Saturation}\label{section:mainResults}

The following two lemmas ensure that we can always find adequate ultrafilters.

\begin{lemma}\label{triralgebra}
    Let $\Frame F=\la W,R,\{S_x\}_{x\in W}\ra$ a Veltman frame, and $f\prec_l g$, for some set $l$ over $W$. If $S^{-1}(A,B) \in f$ and $A\in g$, then there is some $h$ with $f\prec_l h$ and $B\in h$.
\end{lemma}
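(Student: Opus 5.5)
Following the syntactic analogue (the standard ``problem/deficiency'' lemma for $S$-assuring labels), I will build $h$ via the Ultrafilter Principle (Lemma~\ref{lemma:ultrafilterPrinciple}). Set
\[
B_f\coloneqq \{ C\subseteq W \mid \exists^{\mathrm{fin}}\{S_i\in l\}\,\big(S^{-1}(\overline{C},\textstyle\bigcup_i\overline{S_i}\cup \overline{B})\in f\big)\},
\]
that is, the set $B_f$ of Lemma~\ref{lemma:closureUnderRhat} computed for the enlarged label $l\cup\{B\}$. Let $s\coloneqq B_f\cup\Rhat(B_f)\cup\{B\}$. It is natural to include $B$ itself, since $S^{-1}(\overline{B},\overline{B})=W\in f$ puts $B\in B_f$. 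Any ultrafilter $h\supseteq s$ satisfies $f\prec_{l\cup\{B\}} h$ by the very definition of $\prec$, and hence $f\prec_l h$ by restriction (item~\ref{itemFirstBasic} of the basic labelling lemma). It also satisfies $B\in h$. So everything reduces to showing that $s$ has the FIP.

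By Lemma~\ref{fip}, $B_f$ is closed under finite intersections. By Lemma~\ref{lemma:closureUnderRhat}, $\Rhat(B_f)\subseteq B_f$. Hence any finite intersection of members of $s$ contains a single set $C\in B_f$, and it suffices to show that no $C\in B_f$ is empty. So I will suppose, towards a contradiction, that $\varnothing\in B_f$. Then $S^{-1}(W,\bigcup_i\overline{S_i}\cup\overline{B})\in f$ for finitely many $S_i\in l$.

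Now I use the hypothesis $S^{-1}(A,B)\in f$. By Corollary~\ref{theorem:AlgebraicVersionsOfSimpleILfacts}, $S^{-1}(A,B)\subseteq S^{-1}(A,\bigcup_i\overline{S_i}\cup B)$. Combining this with $S^{-1}(W,\bigcup_i\overline{S_i}\cup\overline{B})$ and using \il-reasoning transferred through Corollary~\ref{cor:deductionThm}, I get $S^{-1}(A,\bigcup_i\overline{S_i})\in f$. Concretely, the \il-validity I need is
\[
\alpha\trir(\sigma\vee\beta)\ \wedge\ \top\trir(\sigma\vee\neg\beta)\ \to\ \alpha\trir\sigma .
\]
Its soundness argument in a frame is: from a point satisfying $\beta$ one reaches, by $S_w$, a point satisfying $\sigma\vee\neg\beta$, and transitivity of $S_w$ closes the loop. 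I therefore expect to derive it from \play{2} together with $\beta\trir(\sigma\vee\neg\beta)$, which follows from $\top\trir(\sigma\vee\neg\beta)$ by monotonicity in the first argument. Writing $\alpha$ for $\neg\neg\alpha$, this means $S^{-1}(\overline{\overline{A}},\bigcup_i\overline{S_i})\in f$, so $f\prec_l g$ yields $\overline{A}\in g$, contradicting $A\in g$. Hence $\varnothing\notin B_f$, $s$ has the FIP, and the desired $h$ exists.

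The main obstacle is the bookkeeping with the enlarged label. Lemmas~\ref{fip} and~\ref{lemma:closureUnderRhat} are stated for a proper filter $l$. I will either apply them to $l(l\cup\{B\})$, or simply rerun their proofs verbatim, since those proofs use only the \il-validities and never properness. I also need to check that the displayed \il-principle is genuinely derivable from the axioms \play{1}--\play{5}. If the direct derivation is awkward, I will instead verify the corresponding set inclusion semantically, pointwise on $\Frame F$, using transitivity of $S_w$, in the style of the proof of Theorem~\ref{thm:provW}.
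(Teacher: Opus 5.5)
There is a genuine gap, and it comes from enlarging the label to $l\cup\{B\}$. The \il-principle you plan to use, $\alpha\trir(\sigma\vee\beta)\wedge\top\trir(\sigma\vee\neg\beta)\to\alpha\trir\sigma$, is not valid. In your soundness sketch, the world reached from a $\beta$-world only satisfies $\sigma\vee\neg\beta$. If it satisfies $\neg\beta$ but not $\sigma$, transitivity of $S_w$ gives nothing; syntactically, \play{3} and \play{2} only yield $\alpha\trir(\sigma\vee\neg\beta)$.

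Worse, your intermediate goal can itself fail. Suppose $S^{-1}(B,\overline{B})\in f$, the algebraic form of $B\trir\neg B$. Then no ultrafilter $h$ satisfies $f\prec_{l\cup\{B\}}h$. Taking the finite subfamily $\{B\}$ of the label with $C=B$ forces $B\in h$, since $S^{-1}(\overline B,\overline B)=W\in f$. Taking $C=\overline B$ forces $\overline B\in h$.

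This situation is compatible with all hypotheses of the lemma. Take $W=\{w,u,v\}$ with $wRu$, $wRv$, $S_w$ the full relation on $\{u,v\}$, $f=\Pi_w$, $g=\Pi_u$, $l=\{W\}$ and $A=B=\{u\}$. Then $f\prec_l g$, $A\in g$ and $S^{-1}(A,B)=W\in f$. Yet $w\in S^{-1}(W,\overline B)$, so $\varnothing$ lies in your $B_f$, while $S^{-1}(A,\varnothing)\notin f$. The contradiction you aim for is simply not there. The syntactic picture is the same: the standard lemma keeps the label $S$ fixed, and $B\trir\neg B\in\Gamma$, which is consistent with $A\trir B\in\Gamma$, rules out every $\{B\}$-assuring successor.

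The repair is to keep the label $l$ and add $B$ only as an extra generator, which is what the paper does. Let $B^l_f$ be the set of Lemma~\ref{lemma:closureUnderRhat} for $l$ itself, and put $s\coloneqq B^l_f\cup\Rhat(B^l_f)\cup\{B\}$. Every ultrafilter $h\supseteq s$ satisfies $f\prec_l h$ by definition, and $B\in h$. Your reduction via Lemmas~\ref{fip} and~\ref{lemma:closureUnderRhat} still works, and leaves you to show $B\cap C\neq\varnothing$ for all $C\in B^l_f$.

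Suppose $B\cap C=\varnothing$ for some $C$ with $S^{-1}(\overline C,\bigcup_i\overline{S_i})\in f$. Then $B\subseteq\overline C$, so $S^{-1}(B,\overline C)=W$ by reflexivity of the $S_x$. Two applications of \play{2} turn $S^{-1}(A,B)$, $S^{-1}(B,\overline C)$ and $S^{-1}(\overline C,\bigcup_i\overline{S_i})$ into $S^{-1}(\overline{\overline A},\bigcup_i\overline{S_i})\in f$. Now $f\prec_l g$ gives $\overline A\in g$, contradicting $A\in g$.

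The paper phrases this with $B\cap C\cap\Rhat(C)=\varnothing$ and derives $B\trir\neg C$ via \play{1}, \play{3} and \play{5}. The underlying chain $A\trir B\trir\neg C\trir\bigvee_i\neg S_i$ is the same as in the repair above.
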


\begin{proof}

    Assume that $S^{-1}(A,B)\in f$, that $f\prec_l g$ and $A\in g$. Seeking a contradiction, suppose that, for every $h$, if $f\prec_l h$, then $B\notin h$. Then, 
    $\{B\}\cup\{C_i,\Rhat(C_i)\mid \exists^{\sf fin} S_i\in l\ S^{-1}(\overline{C_i},\bigcup_i \overline{S_i})\in f\}$
    does not have the FIP: by Lemma \ref{fip}, there is $C\in \{C_i,\Rhat(C_i)\mid \exists^{\sf fin} S_i\in l\ S^{-1}(\overline{C_i},\bigcup_i \overline{S_i})\in f\}$
    such that $B\cap C\cap\Rhat(C)=\varnothing$. Now, let $\sf e$ be a valuation such that $\lb p_A^t\rbf=A$, $\lb p_B^t\rbf=B$, $\lb p_C^t\rbf=C$, and, for every $S_i$, $A_{p_{S_i}}=S_i$. Therefore, we have $\lb p^t_B\wedge p^t_C\wedge \nec p^t_C\rbf=\varnothing$, that is: $\lb\neg(p_B^t\wedge p_C^t\wedge\nec p_C^t)\rbf=W\in f'$, for every $f'\in\textsf{U}(W)$. Moreover, from $\neg(p_B\wedge p_C\wedge\nec p_C)\vdash_{\il} p_B\to (\neg p_C\vee\pos \neg p_C)$ and Lemma \ref{lemma:implication}, we have $\lb p^t_B\to (\neg p^t_C\vee\pos \neg p^t_C)\rbf\in f'$, for all ultrafilters $f'\in\textsf{U}(W)$. By Lemma \ref{infforallevaluationsW}, we have  $\lb \nec\big(p^t_B\to (\neg p^t_C\vee\pos \neg p^t_C)\big)\rbf=W$, and, by $\nec\big(p_B\to (\neg p_C\vee\pos \neg p_C)\big)\vdash_{\il}p_B\trir (\neg p_C\vee\pos \neg p_C)\vdash_{\il} p_B\trir\neg p_C$, Lemma \ref{lemma:implication}, and Lemma \ref{infforallevaluationsW}, we have that $\lb p^t_B\trir\neg p_C^t\rbf=W\in f$.
    
    By assumption, we know that there are some $S_i\in l$ such that $S^{-1}(\overline{C},\bigcup_i\overline{S_i})=\lb \neg p^t_C\trir \bigvee_i\neg p^t_{S_i}\rbf \in f$ and that $S^{-1}\big(\overline{\overline{A}},B\big)=\lb \neg\neg p^t_A\trir p^t_B\rbf\in f$. Given that $\neg\neg p_A\trir p_B,\ p_B\trir \neg p_C,\ \neg p_C\trir \bigvee_i\neg p_{S_i}\vdash_{\il} \neg\neg p_A\trir \bigvee_i\neg p_{S_i}$, by Lemma \ref{lemma:implication}, we derive that $S^{-1}\big(\overline{\overline{A}},\bigcup_i\overline{S_i}\big)=\lb \neg\neg p^t_A\trir \bigvee_i\neg p_{S_i}^t\rbf\in f$ and, since $f\prec_l g$, we conclude that $\overline{A}\in g$, which contradicts our hypothesis.
\end{proof}




\begin{lemma}\label{notriralgebra}
    Given $\Frame F=\langle W,R,\{S_w \mid w\in W\}\rangle$ a Veltman frame, $f\in\textsf{U}(W)$, and $\overline{S^{-1}(A,B)} \in f$, there is some ultrafilter $g$ and some proper filter $l$ over $W$ with $A \in g$ and $\overline B\in l$ and $f\prec_l g$.
\end{lemma}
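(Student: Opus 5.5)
The plan mirrors the classical labelling lemma: if $\neg(\alpha\trir\beta)\in\Gamma$, then there is a $\Delta$ with $\Gamma\prec_{\{\neg\beta\}}\Delta$ and $\alpha\in\Delta$. We choose the label first and then obtain $g$ from the Ultrafilter Principle (Lemma~\ref{lemma:ultrafilterPrinciple}).

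\textbf{The label.} Take $l$ to be the principal filter $\{S\subseteq W\mid \overline B\subseteq S\}$, i.e.\ $l=l(\{\overline B\})$. It is proper as soon as $\overline B\neq\varnothing$. Suppose instead $B=W$. For any $w$ and any $x$ with $wRx$, reflexivity of $S_w$ gives $xS_wx$ with $x\in W$. Hence $S^{-1}(A,W)=W$, so $\overline{S^{-1}(A,B)}=\varnothing\in f$, contradicting properness of $f$. Clearly $\overline B\in l$. The key observation is that for every finite choice $S_1,\dots,S_n\in l$ we have $\bigcup_i\overline{S_i}\subseteq B$.

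\textbf{The family.} Let $B_f$ be as in Lemma~\ref{lemma:closureUnderRhat} for this $l$, and put
\[s:=\{A\}\cup\{C,\Rhat(C)\mid C\in B_f\}.\]
\begin{itemize}
\item $B_f$ is nonempty, since $W\in B_f$: indeed $S^{-1}(\varnothing,Y)=W\in f$.
\item $B_f$ is closed under intersections by Lemma~\ref{fip}.
\item $B_f$ is closed under $\Rhat$ by Lemma~\ref{lemma:closureUnderRhat}.
\end{itemize}
Consequently, any finite intersection of members of $s$ other than $A$ contains a single set $C\cap\Rhat(C)$ with $C\in B_f$. Moreover $C\cap\Rhat(C)\in B_f$, because $\Rhat(C)\in B_f$ and $B_f$ is closed under intersections. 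So to verify the FIP of $s$ it suffices to show that $A\cap C\neq\varnothing$ for every $C\in B_f$. (Together with $A\neq\varnothing$, which follows from the argument below by taking $C=W$.)

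\textbf{Main step: $A\cap C\neq\varnothing$.} Suppose for contradiction that $A\cap C=\varnothing$ for some $C\in B_f$, that is, $A\subseteq\overline C$. Pick $S_1,\dots,S_n\in l$ with $S^{-1}(\overline C,\bigcup_i\overline{S_i})\in f$. We need two monotonicity facts about $S^{-1}$.
\begin{itemize}
\item It is monotone in the second argument (Corollary~\ref{theorem:AlgebraicVersionsOfSimpleILfacts}). Since $\bigcup_i\overline{S_i}\subseteq B$, this gives $S^{-1}(\overline C,\bigcup_i\overline{S_i})\subseteq S^{-1}(\overline C,B)$.
\item It is antitone in the first argument: $X\subseteq X'$ implies $S^{-1}(X',Y)\subseteq S^{-1}(X,Y)$. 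This is immediate from the definition of $S^{-1}$. Alternatively, it reflects $\nec(\alpha\to\beta)\wedge(\beta\trir\gamma)\to\alpha\trir\gamma$ via \play{1}, \play{2}, Theorem~\ref{thm:provW}, and the fact that $\Rhat(\overline{X}\cup X')=W$. Applied to $A\subseteq\overline C$, it gives $S^{-1}(\overline C,B)\subseteq S^{-1}(A,B)$.
\end{itemize}
Combining these with upward closure of $f$ yields $S^{-1}(A,B)\in f$. This contradicts $\overline{S^{-1}(A,B)}\in f$, since an ultrafilter cannot contain a set and its complement. This step, in particular checking that every finite union of complements of label elements stays inside $B$ and keeping the direction of the monotonicity correct, is where the real work lies, but I expect it to be routine.

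\textbf{Conclusion.} By the Ultrafilter Principle, $s$ extends to an ultrafilter $g$. Then $A\in g$. Also, whenever $S^{-1}(\overline{C},\bigcup_i\overline{S_i})\in f$ for some finite choice $S_i\in l$, we have $C\in B_f$, hence $C,\Rhat(C)\in s\subseteq g$. This is exactly $f\prec_l g$, with $l$ proper and $\overline B\in l$.
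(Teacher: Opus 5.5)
Your proof is correct and follows essentially the paper's argument: the same principal label $l=\{S\subseteq W\mid \overline B\subseteq S\}$, the same family $\{A\}\cup\{C,\Rhat(C)\mid C\in B_f\}$ whose finite intersection property is obtained via Lemma~\ref{fip}, and the same final contradiction $S^{-1}(A,B)\in f$. The differences are minor streamlinings rather than a new route: you fold $\Rhat(C)$ into $B_f$ via Lemma~\ref{lemma:closureUnderRhat}, and you use the direct set-theoretic antitonicity of $S^{-1}$ in its first argument instead of the paper's derivation through \play{1}, \play{3}, \play{5} and \play{2}; you also explicitly check that $l$ is proper (i.e.\ $B\neq W$), which the paper leaves implicit.
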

\begin{proof}\
Assume $\overline{S^{-1}(A,B)} \in f$ and suppose for a contradiction that for all ultrafilters $g$ over $W$ and for all proper filters $l$, if $f\prec_l g$, then $A\notin g$. Let $l=\{S:\overline B\subseteq S\}$\footnote{Clearly $\overline{B}\in l$ as $\overline B\subseteq\overline{B}$}. 
Then, we have $\{A\}\cup\{C_i,\Rhat(C_i)\mid \exists^{\sf fin}\ \{ S_i \in  l\}\ \big(S^{-1}(\overline{C_i}, \bigcup_i S_i)\big)\}$ does not have the finite intersection property. By Lemma $\ref{fip}$, there is some $C\in \{C_i,\Rhat(C_i)\mid \exists^{\textsf{fin}} S_i\ S^{-1}(\overline{C_i}, B)\}$ we have that $C\cap \Rhat(C) \cap A=\varnothing$.

Let $\sf e$ be a valuation such that $\lb p_A\rbf=A$, $\lb p_C\rbf=C$ and $\lb p_{S_i}\rbf=S_i$ for each $S_j\in l$.
We observe that $\lb p^t_A\wedge p^t_C\wedge \nec p^t_C \rbf=\varnothing$. But then $\lb\neg(p^t_A\wedge p^t_C\wedge \nec p^t_C)\rbf=W \in f$. We observe that $\neg(p_A\wedge p_C\wedge\nec p_C) \vdash_{\il}  p_A\rightarrow \neg p_C\vee \pos \neg p_C$ and that by Lemma \ref{lemma:implication} we obtain that $\lb p_A^t\rightarrow \neg p_C^t\vee \pos \neg p_C^t\rbf \in f$. By Lemma \ref{infforallevaluationsW} we obtain that $\Rhat(\lb p_A^t\rightarrow \neg p_C^t\vee \pos \neg p_C^t\rbf)=W \in f$. We observe that $\nec (p_A\rightarrow \neg p_C\vee \pos \neg p_C)\vdash_{\il}  p_A\trir \neg p_C\vee\pos\neg p_C$ by $\play{1}$ and, again by \ref{lemma:implication} $\lb p_A^t\trir \neg p_C^t\vee\pos\neg p_C^t\rbf \in f$. We also observe that $\il\vdash \neg p_C\trir \neg p_C$ and $\il\vdash \pos\neg p_C\trir \neg p_C$. Whence by using axioms $\play{3}$ and $\play{2}$ and Lemma \ref{lemma:implication} we resolve that $\lb p^t_A\trir \neg p^t_C\rbf=S^{-1}(A,\overline{C})\in f$.
But then, we observe that, from the fact that for a finite choice of $S_i\in l$ we have $S^{-1}(\overline{C},\bigcup_i \overline{S_i})\in f$, we also have $\lb \neg p_C^t\trir p^t_B\rbf=S^{-1}(\overline{C},B)\in f$, by Corollary \ref{theorem:AlgebraicVersionsOfSimpleILfacts}.\ref{item:AtrirBandBtoCThenAtrirC:theorem:AlgebraicVersionsOfSimpleILfacts}. It can be proven that $p_A\trir \neg p_C ,\neg p_C\trir p_B \vdash_{\il} p_A\trir p_B$. By Lemma \ref{lemma:implication} we have $\lb p_A^t\trir p_B^t\rbf=S^{-1}(A,B)\in f$ which contradicts our hypothesis. 
\end{proof}


\begin{lemma}\label{extensioniffTranslation}
    Given $\Frame M=\la\Frame F, \eval\ra$ a Veltman model where $\Frame F=\langle W,R,\{S_w \mid w\in W\}\rangle$, let $f\in\textsf{U}(W)$ and $\sf e$ be a  valuation such that $\lb A_p\rbf=\lb p\rb_{\Frame M}$ for every $p\in \atom$. Then, for any $\varphi\in\form$, we have $\lb \varphi^t\rbf\in f\iff \lb \varphi\rb_{\Frame M}\in f$.
\end{lemma}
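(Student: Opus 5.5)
The plan is to prove the stronger statement that the two sets coincide: $\lb \varphi^t\rbf = \lb \varphi\rb_{\Frame M}$ for every $\varphi\in\form$. The equivalence $\lb \varphi^t\rbf\in f\iff \lb \varphi\rb_{\Frame M}\in f$ then holds trivially for any ultrafilter $f$ (indeed for any family of subsets of $W$). Nothing about ultrafilters is used; the statement is a purely set-theoretic agreement between the algebraic translation and the Kripke forcing of Definition~\ref{def:veltman}.

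The proof proceeds by induction on the construction of $\varphi$, following the grammar $p\mid\bot\mid(\alpha\to\beta)\mid\nec\alpha\mid\alpha\trir\beta$.
\begin{itemize}
\item \emph{Atoms.} $\lb p^t\rbf=\lb A_p\rbf=\lb p\rb_{\Frame M}$ holds by the choice of $\sf e$.
\item \emph{Falsum.} $\lb\bot^t\rbf=\varnothing=\lb\bot\rb_{\Frame M}$, since no world forces $\bot$.
\item \emph{Implication.} By the induction hypothesis, $\lb(\alpha\to\beta)^t\rbf=\overline{\lb\alpha^t\rbf}\cup\lb\beta^t\rbf=\overline{\lb\alpha\rb_{\Frame M}}\cup\lb\beta\rb_{\Frame M}$. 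This equals $\lb\alpha\to\beta\rb_{\Frame M}$ by the forcing clause for $\to$.
\item \emph{Box.} $\lb(\nec\alpha)^t\rbf=\Rhat(\lb\alpha^t\rbf)=\Rhat(\lb\alpha\rb_{\Frame M})=\{x\mid\forall y\,(xRy\Rightarrow y\Vdash\alpha)\}$, which is exactly $\lb\nec\alpha\rb_{\Frame M}$.
\item \emph{Interpretability.} $\lb(\alpha\trir\beta)^t\rbf=S^{-1}(\lb\alpha\rb_{\Frame M},\lb\beta\rb_{\Frame M})$ by the induction hypothesis. Unfolding the definition of $S^{-1}$, this is the set of $w$ such that every $x$ with $wRx$ and $x\Vdash\alpha$ has some $y$ with $xS_wy$ and $y\Vdash\beta$. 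That is literally the forcing clause for $\trir$.
\end{itemize}

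The only step needing any care is the $\trir$ case, where the order of quantifiers and the role of $w$ as subscript of $S$ must be matched against the definition of $S^{-1}$. Once the definitions are written side by side, however, this is a direct comparison. I expect no genuine obstacle: the lemma is essentially the observation that the translation $(\cdot)^t$ was designed so that its clauses mirror the Veltman forcing clauses.
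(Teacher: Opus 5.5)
Your proof is correct and follows the paper's route: a structural induction whose cases (falsum, atoms, implication, box, and the interpretability connective) just match each clause of the translation against the corresponding Veltman forcing clause. The paper states the induction as membership in $f$, but its inductive steps actually use the set equality $\llbracket\psi^t\rrbracket=\llbracket\psi\rrbracket_{\mathcal{M}}$. Your explicit strengthening to that equality is the clean way to make the induction go through.
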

\begin{proof}
    By a standard structural induction on the complexity of $\varphi$.
    \begin{enumerate}
    \item $\varphi=\bot$; then $\lb \bot^t\rbf=\varnothing=\lb\bot\rb_{\Frame M}$
    \item 
    $\varphi=p$ with $p\in\atom$; By definition $\lb p^t\rbf=\lb p\rb_{\Frame M}$.
    \item
    $\varphi=\psi\rightarrow\gamma$: By definition $\lb \psi^t\rightarrow\gamma^t\rbf= \lb\overline{\psi^t}\rbf\cup\lb \gamma^t\rbf$. By induction hypothesis, we have that $\lb \psi^t\rbf=\lb \psi\rb_{\Frame M}$, so that $\lb\overline{\psi^t}\rbf\cup\lb\gamma^t\rbf=\lb\overline{\psi}\rb_{\Frame M}\cup \lb\gamma\rb_{\Frame M}=\lb \psi\rightarrow\gamma\rb_{\Frame M}$, as required.
    \item
    $\varphi=\nec \psi$: 
    By induction hypothesis, we have $$\lb \nec \psi^t\rbf=\Rhat(\lb\psi^t\rbf)= \Rhat(\lb\psi\rb_{\Frame M})\in f\iff \lb \nec \psi\rb_{\Frame M}\in f,$$ as required.
    \item 
    $\varphi=\psi\trir\gamma$: 
    By induction hypothesis, we have that $$\lb \psi^t\trir\gamma^t\rbf=S^{-1}(\lb\psi^t\rbf,\lb\gamma^t\rbf)=  S^{-1}(\lb \psi\rb_{\Frame M},\lb \gamma\rb_{\Frame M})\in f\iff \lb \psi\trir\gamma\rb_{\Frame M}\in f,$$ as required.   
    \end{enumerate}
\end{proof}

\begin{lemma}\label{oneDirectionTruthLemmmaUltrafilterExtension}
    Given $\Frame M=\la\Frame F, \eval\ra$ a Veltman model where $\Frame F=\langle W,R,\{S_w \mid w\in W\}\rangle$. For any formula $\varphi$ and any $\la \Pi_x, \sigma \ra$ in the ultrafilter extension we have     \[
    \Frame M , x \Vdash \varphi \ \ \Longrightarrow \ \ \ultraFilterCounterpart{\Frame M}, \la \Pi_x, \sigma \ra \Vdash \varphi .
    \]
\end{lemma}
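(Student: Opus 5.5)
The plan is to prove a stronger statement, a full truth lemma for the ultrafilter extension, and then specialise it to principal ultrafilters. Concretely, I will show by induction on $\varphi$ that for every $\la f,\sigma\ra\in\ultraFilterCounterpart{W}$ we have
\[
\ultraFilterCounterpart{\Frame M},\la f,\sigma\ra\Vdash\varphi \iff \lb\varphi\rb_{\Frame M}\in f .
\]
The statement then follows because $\lb\varphi\rb_{\Frame M}\in\Pi_x$ iff $x\in\lb\varphi\rb_{\Frame M}$, that is, iff $\Frame M,x\Vdash\varphi$. The stronger form is needed because the implication step of the induction requires both directions of the hypothesis. Throughout I fix the valuation $\sf e$ with $\lb A_p\rbf=\lb p\rb_{\Frame M}$. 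By Lemma~\ref{extensioniffTranslation}, I may then pass freely between $\lb\varphi^t\rbf$ and $\lb\varphi\rb_{\Frame M}$, and so use the algebraic lemmas of Section~\ref{section:fullLabelsAlgebras}.

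\textbf{Propositional cases.} The case of $\bot$ holds by properness of $f$. The case of atoms is Definition~\ref{definition:ultrafilterExtensionModel}. The case $\psi\to\chi$ uses totality and closure properties of ultrafilters: $\overline{X}\cup Y\in f$ iff $X\notin f$ or $Y\in f$.

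\textbf{Case $\nec\psi$.} For ($\Leftarrow$), let $\Rhat(\lb\psi\rb)\in f$ and $\la f,\sigma\ra\ultraFilterCounterpart{R}\la g,\tau\ra$. Lemma~\ref{RlTransitivity} gives $f\prec_l g$ for the label $l$ at position $|\sigma|$ of $\tau$. Lemma~\ref{lemma:assuringThenSuccessors} then gives $\lb\psi\rb\in g$, and the induction hypothesis applies. For ($\Rightarrow$), suppose $\Rhat(\lb\psi\rb)\notin f$. Since $\Rhat(\lb\psi\rb)=S^{-1}(\overline{\lb\psi\rb},\varnothing)$, Lemma~\ref{notriralgebra} with $A=\lb\neg\psi\rb$ and $B=\varnothing$ yields $g,l$ with $f\prec_l g$ and $\lb\neg\psi\rb\in g$. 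By Definition~\ref{def:uedef}, $\la g,\sigma^\smallfrown\la l\ra\ra\in\ultraFilterCounterpart{W}$ and it is an $\ultraFilterCounterpart{R}$-successor of $\la f,\sigma\ra$. The induction hypothesis shows that it refutes $\psi$.

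\textbf{Case $\psi\trir\chi$, direction ($\Leftarrow$).} Assume $S^{-1}(\lb\psi\rb,\lb\chi\rb)\in f$. Take $\la f,\sigma\ra\ultraFilterCounterpart{R}\la g,\tau\ra$ with $g$ forcing $\psi$, where $\tau=\sigma^\smallfrown\la l\ra^\smallfrown\tau'$. Then $f\prec_l g$ by Lemma~\ref{RlTransitivity}, and $\lb\psi\rb\in g$ by the induction hypothesis. Lemma~\ref{triralgebra} gives $h$ with $f\prec_l h$ and $\lb\chi\rb\in h$. The node $\la h,\sigma^\smallfrown\la l\ra\ra$ is an $\ultraFilterCounterpart{R}$-successor of $\la f,\sigma\ra$ carrying the same label $l$ at position $|\sigma|$. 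Hence $\la g,\tau\ra\,\ultraFilterCounterpart{S}_{\la f,\sigma\ra,{\sf one}}\,\la h,\sigma^\smallfrown\la l\ra\ra$, and the induction hypothesis finishes this direction.

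\textbf{Case $\psi\trir\chi$, direction ($\Rightarrow$).} If $S^{-1}(\lb\psi\rb,\lb\chi\rb)\notin f$, Lemma~\ref{notriralgebra} gives $g$ and a proper filter $l$ with $\lb\psi\rb\in g$, $\overline{\lb\chi\rb}\in l$ and $f\prec_l g$. I take the witness $\la g,\sigma^\smallfrown\la l\ra\ra$. I must show that no $\ultraFilterCounterpart{S}_{\la f,\sigma\ra}$-successor of it forces $\chi$. The key invariant is that every $\la h,\tau''\ra$ reachable from $\la g,\sigma^\smallfrown\la l\ra\ra$ via $\ultraFilterCounterpart{S}_{\la f,\sigma\ra}$ lies in $\ultraFilterCounterpart{R}[\la f,\sigma\ra]$ and has $(\tau'')_{|\sigma|}=l$. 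This holds because each generator preserves it: reflexivity trivially; the one-step relation by its third clause; and the $\ultraFilterCounterpart{R}$-part because $\ultraFilterCounterpart{R}$ only extends sequences, by Lemma~\ref{RlTransitivity}. The invariant is then preserved under transitive closure. Given the invariant, Lemma~\ref{RlTransitivity} yields $f\prec_l h$. Lemma~\ref{lemma:assuringAssures} then gives $\overline{\lb\chi\rb}\in h$, so $h$ does not force $\chi$ by the induction hypothesis.

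\textbf{Main obstacle.} I expect the invariant in the last step to be the hardest part. It needs a careful check that the smallest reflexive–transitive relation generated in Definition~\ref{def:uedef} never leaves the cone of nodes with label $l$ at position $|\sigma|$. The delicate point is the generator $R^{\sf ue}[\la f,\sigma\ra]^2\cap R^{\sf ue}$. I would handle it by an induction on the length of a generating chain, using the initial-segment property from Lemma~\ref{RlTransitivity} at each step.
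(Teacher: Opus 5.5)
Your proposal is correct, and it is more careful than the paper's own argument.

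\textbf{The shared step.} Your ($\Leftarrow$) step for $\psi\trir\chi$ is exactly the paper's $\trir$ step: Lemma~\ref{RlTransitivity}, then Lemma~\ref{triralgebra}, then the one-step relation $S^{\sf ue}_{\langle f,\sigma\rangle,{\sf one}}$ to $\langle h,\sigma^\smallfrown\langle l\rangle\rangle$. The paper folds $\nec$ into $\trir$ via $\nec\alpha\leftrightarrow\neg\alpha\trir\bot$, while you treat $\nec$ separately using $\widehat{R^{-1}}(X)=S^{-1}(\overline{X},\varnothing)$. Both are fine.

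\textbf{Where the routes differ.} The paper runs a one-directional induction, proving only $\llbracket\psi\rrbracket_{\mathcal M}\in f\Rightarrow\langle f,\sigma\rangle\Vdash\psi$. Yet two of its steps tacitly use the converse for subformulas, which that induction does not supply:
\begin{itemize}
\item the negation case;
\item the inference ``$\langle g,\tau\rangle\Vdash\alpha$, hence $A\in g$'' in the $\trir$ case.
\end{itemize}
The paper then obtains the converse in Theorem~\ref{truthLemmmaUltrafilterExtension} by applying this lemma to $\neg(\alpha\trir\beta)$. That is circular, since the negation step is precisely where the converse for $\alpha\trir\beta$ is needed.

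Your simultaneous induction on the biconditional, over arbitrary $\langle f,\sigma\rangle$, is what the Boolean step actually requires, as you observe. You supply the missing ($\Rightarrow$) direction of the $\trir$ case with Lemma~\ref{notriralgebra}, which the paper never uses in its truth lemma. You combine it with the invariant that $S^{\sf ue}_{\langle f,\sigma\rangle}$-reachability from $\langle g,\sigma^\smallfrown\langle l\rangle\rangle$ stays inside $R^{\sf ue}[\langle f,\sigma\rangle]$ with label $l$ at position $|\sigma|$, and then with Lemma~\ref{lemma:assuringAssures}.

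\textbf{The invariant holds.} Each generator preserves it:
\begin{itemize}
\item reflexive pairs are trivial;
\item $R^{\sf ue}[\langle f,\sigma\rangle]^2\cap R^{\sf ue}$ only extends sequences;
\item the one-step relation preserves it by its third clause.
\end{itemize}
So induction on the length of a generating chain suffices. The step you flagged as the main obstacle is therefore routine.

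\textbf{What each route buys.} Yours gives a complete proof of the full truth lemma, from which both this lemma (via $\llbracket\varphi\rrbracket_{\mathcal M}\in\Pi_x\iff x\in\llbracket\varphi\rrbracket_{\mathcal M}$) and Theorem~\ref{truthLemmmaUltrafilterExtension} follow at once. The paper's route is shorter, but it only goes through once the converse direction is already available.
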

\begin{proof}
    By induction on the complexity of the formula $\psi$ we show that $\semanticExtensionInAModelThreeArguments{\psi}{\Frame M}{\ } \in f\Longrightarrow \ultraFilterCounterpart{\Frame M}, \la f, \sigma \ra \Vdash \psi$ .

If $\varphi=p$ for some $p\in \atom$ follows from the definition. 
The Boolean connectives follow directly from the inductive hypothesis, although, for clarity, we will show the case for negation:
Let $\varphi= \neg \psi$ Assume $\semanticExtensionInAModelThreeArguments{\neg\psi}{\Frame M}{}\in f$. Assume for a contradiction that for some $\la f,\sigma\ra\in \ultraFilterCounterpart{W}, \ultraFilterCounterpart{M}, \la f,\sigma\ra\not\Vdash \neg\psi$. Then $\ultraFilterCounterpart{M}, \la f,\sigma\ra\Vdash \psi$. By induction hypothesis (in contraposition) this entails that $\overline{\semanticExtensionInAModelThreeArguments{\psi}{\Frame M}{}}\notin f$. But then $\semanticExtensionInAModelThreeArguments{\neg \psi}{\Frame M}{}\notin f$ which contradicts our initial assumption.

Since the $\nec$ and $\pos$ modalities can be expressed in terms of $\trir$ we focus on the latter case: let $\psi=\alpha\trir\beta$.

Assume $\semanticExtensionInAModelThreeArguments{\alpha\trir\beta}{\Frame M}{\ } \in f$ and $\sigma$ is a sequence of proper filters over $W$ in such a way that $\la f,\sigma\ra\in \ultraFilterCounterpart{W}$. Assume there is some $\la g,\tau\ra\in\ultraFilterCounterpart{W}$ such that $\la f,\sigma\ra \ultraFilterCounterpart{R} \la g,\tau\ra$ and $\ultraFilterCounterpart{\Frame M},\la g,\tau\ra\Vdash \alpha$. We define $\sf e$ so that, for all $p\in \atom$, we have $\lb A_p\rbf= \semanticExtensionInAModelThreeArguments{p}{\Frame M}{\ }$. We will denote $A= \lb \alpha^t\rbf$ and $B=\lb \beta\rbf$.
By Lemma \ref{extensioniffTranslation} we have $A=\semanticExtensionInAModelThreeArguments{\alpha}{\Frame M}{\ }$ and $B=\semanticExtensionInAModelThreeArguments{\beta}{\Frame M}{\ }$ and that $S^{-1}(A,B)\in f$, $\la f,\sigma\ra \ultraFilterCounterpart{R} \la g,\tau\ra$ and $A\in g$. We observe that $\tau = \sigma\smallfrown\la l\ra\smallfrown \overline \tau$ for some $l$ and $\overline \tau$. Then, by Lemma \ref{RlTransitivity}, we have that $f\prec_l g$ and, by Lemma \ref{triralgebra}, we have that there is some $h\in \wp(W)$ such that $f\prec_l h$ and $B\in h$, so that that $\la f,\sigma\ra \ultraFilterCounterpart{R}\la h, \la \sigma\smallfrown \la l\ra\ra\ra$ and $(\tau)_{|\sigma|}=(\sigma\smallfrown \la l\ra)_{|\sigma|}$. Therefore $\la g,\tau\ra \ultraFilterCounterpart{S}_{\la f,\sigma\ra} \la h, \la \sigma\smallfrown \la l\ra\ra\ra$. Since $\la g,\tau\ra$ was arbitrary, we conclude that $\ultraFilterCounterpart{\Frame M}, f\Vdash \alpha\trir \beta$.
\end{proof}

\begin{theorem}\label{truthLemmmaUltrafilterExtension}
    Given $\Frame M=\la\Frame F, \eval\ra$ a Veltman model where $\Frame F=\langle W,R,\{S_w \mid w\in W\}\rangle$. For any formula $\varphi$ and any $\la \Pi_x, \sigma \ra$ in the ultrafilter extension we have     \[
    \Frame M , x \Vdash \varphi \ \ \Longleftrightarrow \ \ \ultraFilterCounterpart{\Frame M}, \la \Pi_x, \sigma \ra \Vdash \varphi .
    \]
\end{theorem}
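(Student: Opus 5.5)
We will prove the more general statement that for every ultrafilter $f\in\textsf{U}(W)$ and every $\la f,\sigma\ra\in\ultraFilterCounterpart{W}$,
\[
\lb\varphi\rb_{\Frame M}\in f \iff \ultraFilterCounterpart{\Frame M},\la f,\sigma\ra\Vdash\varphi .
\]
The theorem then follows by taking $f=\Pi_x$, since $\lb\varphi\rb_{\Frame M}\in\Pi_x$ iff $\Frame M,x\Vdash\varphi$.

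The proof is by induction on $\varphi$. The left-to-right direction is exactly what the proof of Lemma \ref{oneDirectionTruthLemmmaUltrafilterExtension} establishes for arbitrary $f$. To make the induction sound, we will carry both directions simultaneously, because the negation and implication cases use the converse of the inductive hypothesis. For atoms the equivalence holds by Definition \ref{definition:ultrafilterExtensionModel}. The Boolean cases follow from totality, properness and intersection closure of $f$. The $\nec$ case reduces to $\trir$ via $\nec\alpha\leftrightarrow\neg\alpha\trir\bot$ together with Lemma \ref{extensioniffTranslation}. So the only substantive work is the right-to-left direction for $\alpha\trir\beta$.

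For that case we argue contrapositively. Suppose $\lb\alpha\trir\beta\rb_{\Frame M}\notin f$, i.e.\ $\overline{S^{-1}(A,B)}\in f$ with $A=\lb\alpha\rb_{\Frame M}$ and $B=\lb\beta\rb_{\Frame M}$ (using Lemma \ref{extensioniffTranslation}). Lemma \ref{notriralgebra} yields an ultrafilter $g$ and a proper filter $l$ with $A\in g$, $\overline B\in l$ and $f\prec_l g$. By Definition \ref{def:uedef}, $\la g,\sigma\smallfrown\la l\ra\ra\in\ultraFilterCounterpart{W}$ and $\la f,\sigma\ra\ultraFilterCounterpart{R}\la g,\sigma\smallfrown\la l\ra\ra$. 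By the inductive hypothesis, this world forces $\alpha$. It remains to show that every $\ultraFilterCounterpart{S}_{\la f,\sigma\ra}$-successor of it fails to force $\beta$.

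This is the main obstacle, because $\ultraFilterCounterpart{S}_{\la f,\sigma\ra}$ is a reflexive--transitive closure of two kinds of one-step links. First, $\ultraFilterCounterpart{R}$-links inside $R^{\sf ue}[\la f,\sigma\ra]$: these extend the sequence and therefore keep the entry at position $|\sigma|$. Second, $\ultraFilterCounterpart{S}_{\la f,\sigma\ra,{\sf one}}$-links: these preserve $(\tau)_{|\sigma|}$ by definition. By induction on the length of a chain we therefore show that any $\la h,\tau'\ra$ with $\la g,\sigma\smallfrown\la l\ra\ra\,\ultraFilterCounterpart{S}_{\la f,\sigma\ra}\,\la h,\tau'\ra$ has $(\tau')_{|\sigma|}=l$ and satisfies $\la f,\sigma\ra\ultraFilterCounterpart{R}\la h,\tau'\ra$. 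Lemma \ref{RlTransitivity} then gives $f\prec_l h$, and Lemma \ref{lemma:assuringAssures} gives $l\subseteq h$. Hence $\overline B\in h$, so $B\notin h$, and by the inductive hypothesis $\la h,\tau'\ra\nVdash\beta$. Consequently $\la f,\sigma\ra\nVdash\alpha\trir\beta$, which completes the induction.

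One point needs checking in the chain argument: the reflexive case and the $R^{\sf ue}$-links must never leave the cone above $\la f,\sigma\ra$ with label $l$ at position $|\sigma|$. This holds because $R^{\sf ue}[\la f,\sigma\ra]^2\cap R^{\sf ue}$ only relates worlds $\ultraFilterCounterpart{R}$-above $\la f,\sigma\ra$, and by Lemma \ref{RlTransitivity} $\ultraFilterCounterpart{R}$ only extends sequences.
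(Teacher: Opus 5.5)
Your proof is correct, and for the one substantive step, the right-to-left direction of the $\trir$ case, it takes a different route from the paper.

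The paper argues that if $\lb\alpha\trir\beta\rb_{\Frame M}\notin f$ then $\lb\neg(\alpha\trir\beta)\rb_{\Frame M}\in f$. It then appeals to Lemma~\ref{oneDirectionTruthLemmmaUltrafilterExtension} to conclude $\la f,\sigma\ra\Vdash\neg(\alpha\trir\beta)$. But the negation clause of that lemma (the step ``by induction hypothesis in contraposition'') really uses the converse implication. Here that converse is exactly $\la f,\sigma\ra\Vdash\alpha\trir\beta\Rightarrow\lb\alpha\trir\beta\rb_{\Frame M}\in f$, which is the very thing being proved. So the paper's route is circular as written, and Lemma~\ref{notriralgebra}, evidently tailored to this step, is never invoked.

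Your argument supplies the missing content. You obtain the refuting successor $\la g,\sigma\smallfrown\la l\ra\ra$ directly from Lemma~\ref{notriralgebra}. You then check, by induction along chains, that the closure defining $S^{\sf ue}_{\la f,\sigma\ra}$ never leaves the cone above $\la f,\sigma\ra$ with entry $l$ at position $|\sigma|$. Lemmas~\ref{RlTransitivity} and~\ref{lemma:assuringAssures} then put $\overline B$ into every $S^{\sf ue}_{\la f,\sigma\ra}$-successor. This is precisely where the label sequences in $\ultraFilterCounterpart{W}$ do their work.

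Your decision to carry both directions simultaneously is essential, and not only for the Boolean cases. The left-to-right $\trir$ case also needs the converse hypothesis for $\alpha$, to pass from $\la g,\tau\ra\Vdash\alpha$ to $\lb\alpha\rb_{\Frame M}\in g$. The paper's version is shorter because it recycles the one-directional lemma, but it does not close the induction; yours is longer but self-supporting.

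One small adjustment: $\neg\alpha\trir\bot$ is not a subformula of $\nec\alpha$. So phrase the $\nec$ case as an application of your $\trir$ argument to the pair $\neg\alpha,\bot$, whose hypotheses follow from that for $\alpha$. Alternatively, handle it directly via $f_{\nec}\subseteq g$ and Lemma~\ref{notriralgebra} applied to $\overline{\lb\alpha\rb_{\Frame M}}$ and $\varnothing$.
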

\begin{proof}
We will show that 
\begin{equation*}
\semanticExtensionInAModelThreeArguments{\psi}{\Frame M}{\ } \in f \ \iff \ 
    \ultraFilterCounterpart{\Frame M}, \la f, \sigma \ra \Vdash \psi
\end{equation*}

by induction on the complexity of the formula $\psi$

If $\varphi=p$ for some $p\in \atom$ it follows from the definition. For Boolean connectives it follows directly from the induction hypothesis and Lemma \ref{oneDirectionTruthLemmmaUltrafilterExtension}. As stated in the proof of the previous Lemma, $\nec$ and $\pos$ modalities can be expressed in terms of $\trir$, so we will only provide proof for the latter case. Let $\psi=\alpha\trir\beta$; the left-to-right direction is proven in Lemma \ref{oneDirectionTruthLemmmaUltrafilterExtension}.

For the Right-to-left direction assume that $ \semanticExtensionInAModelThreeArguments{\alpha\trir\beta}{\Frame M}{\ } \notin f$. Since $f$ is an ultrafilter, we have that $\overline{\semanticExtensionInAModelThreeArguments{\alpha\trir\beta}{\Frame M}{\ }}\in f$ and, 
per definition, we obtain that $\semanticExtensionInAModelThreeArguments{\neg(\alpha\trir\beta)}{\Frame M}{\ }\in f$. Then, by Lemma \ref{oneDirectionTruthLemmmaUltrafilterExtension}, 
$\ultraFilterCounterpart{\Frame M}, \la f, \sigma \ra \Vdash \neg(\alpha\trir\beta)$ and hence $\ultraFilterCounterpart{\Frame M}, \la f, \sigma \ra \not\Vdash (\alpha\trir\beta)$ as intended.

\end{proof}
We also have that the ultrafilter extension is modally saturated.

\begin{definition}[Modal saturation]\label{def:mSaturated}
    Let $\mathcal{M}$ be a modal model and $x\in\mathcal{M}$. We call a set of $\Sigma$ of modal sentences \textbf{locally possible} in $x$ whenever for any finite $\Sigma'\subseteq\Sigma$ we have $\mathcal{M},x\Vdash\pos\bigwedge \Sigma'$. The set $\Sigma$ is called \textbf{possible} (outright) at $x$ whenever there is some $y\in\mathcal{M}$ with $xRy$ and $\mathcal{M},y\Vdash\Sigma$.

    We call a modal model $\mathcal{M}$ \textbf{modally saturated} whenever for any $x\in\mathcal{M}$ we have that any set that is locally possible at $x$ is possible at $x$.
\end{definition}

\begin{theorem}
    For any Veltman model $\Frame M=\la\Frame F, \eval \ra$, the ultrafilter extension $\ultraFilterCounterpart{\Frame M}$ is modally saturated.
\end{theorem}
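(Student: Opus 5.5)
We follow the classical argument for modal saturation of ultrafilter extensions, using the truth lemma (Theorem~\ref{truthLemmmaUltrafilterExtension}, in its general form $\lb\psi\rb_{\Frame M}\in f \iff \ultraFilterCounterpart{\Frame M},\la f,\sigma\ra\Vdash\psi$ established in its proof) together with the assuring-successor machinery of Section~\ref{section:fullLabelsAlgebras}. Fix a world $\la f,\sigma\ra$ of $\ultraFilterCounterpart{\Frame M}$ and a set $\Sigma$ locally possible there. For every finite $\Sigma'\subseteq\Sigma$ we have $\ultraFilterCounterpart{\Frame M},\la f,\sigma\ra\Vdash\pos\bigwedge\Sigma'$. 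By the truth lemma this gives $\lb\pos\bigwedge\Sigma'\rb_{\Frame M}=R^{-1}(\bigcap_{\psi\in\Sigma'}\lb\psi\rb_{\Frame M})\in f$. Equivalently, by Lemma~\ref{lemma:interdef}, $\Rhat(\overline{\bigcap_{\psi\in\Sigma'}\lb\psi\rb_{\Frame M}})\notin f$.

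It suffices to produce an ultrafilter $g$ and a proper filter $l$ with $f\prec_l g$ and $\lb\psi\rb_{\Frame M}\in g$ for all $\psi\in\Sigma$. Then $(g,\sigma^\smallfrown\la l\ra)\in\ultraFilterCounterpart{W}$ by Definition~\ref{def:uedef}, and it is an $\ultraFilterCounterpart{R}_{\sf one}$- hence $\ultraFilterCounterpart{R}$-successor of $\la f,\sigma\ra$. The truth lemma then gives $\ultraFilterCounterpart{\Frame M},\la g,\sigma^\smallfrown\la l\ra\ra\Vdash\Sigma$.

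The simplest choice is the trivial label $l=\{W\}$. With this label, $f\prec_l g$ says exactly that $S^{-1}(\overline A,\varnothing)\in f$ implies $A,\Rhat A\in g$. Since $\nec p\leftrightarrow \neg p\trir\bot$ is an \il-theorem, Theorem~\ref{thm:provW} shows $S^{-1}(\overline A,\varnothing)=\Rhat(A)$. So the condition reduces to $f_{\nec}\cup\Rhat(f_{\nec})\subseteq g$. Moreover $A\in f_\nec$ implies $\Rhat A\in f_\nec$ by Corollary~\ref{theorem:AlgebraicVersionsOfSimpleILfacts}.\ref{item:necAnecnecA:theorem:AlgebraicVersionsOfSimpleILfacts}, so the condition is just $f_\nec\subseteq g$. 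Thus we need an ultrafilter extending
\[
\{\lb\psi\rb_{\Frame M}\mid\psi\in\Sigma\}\cup f_{\nec}.
\]
By the Ultrafilter Principle (Lemma~\ref{lemma:ultrafilterPrinciple}) it is enough to show this family has the FIP. The set $f_\nec$ is closed under finite intersections, because $\Rhat$ commutes with intersections and $f$ is a filter. So take $A\in f_\nec$ and finite $\Sigma'$, and put $D=\bigcap_{\Sigma'}\lb\psi\rb_{\Frame M}$. If $A\cap D=\varnothing$, then $A\subseteq\overline D$, so $\Rhat(A)\subseteq\Rhat(\overline D)$ and hence $\Rhat(\overline D)\in f$. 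This contradicts local possibility. So the FIP holds, and Lemma~\ref{lemma:ultrafilterPrinciple} yields the desired $g$.

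The main obstacle is bookkeeping rather than combinatorics. First, we must check that the proof of the truth lemma really covers arbitrary ultrafilters $f$ and not only principal ones; its stated form mentions $\Pi_x$, but its proof is for general $f$, and we will rely on that. Second, we must check that the trivial label $\{W\}$ qualifies as a proper filter in the sense required by Definition~\ref{def:uedef}. If a nontrivial label were demanded, we would instead take $l=l(\{\overline{\varnothing}\})$ or invoke Lemma~\ref{lemma:filtergeneratedbyaFip}.
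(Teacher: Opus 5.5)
Your proof is correct. It reaches the same reduction as the paper: find an ultrafilter extending $f_{\nec}\cup\{\lb\psi\rb_{\Frame M}\mid\psi\in\Sigma\}$ via the FIP and Lemma~\ref{lemma:ultrafilterPrinciple}. It differs from the paper in two places.

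\textbf{The FIP step.} The paper argues through actual successors in the extension. For finite $T\subseteq\Sigma$ it takes an $\ultraFilterCounterpart{R}$-successor $\la g,\sigma'\ra$ forcing $\bigwedge T$. It then uses Lemmas~\ref{RlTransitivity}, \ref{lemma:assuringThenSuccessors} and \ref{lemma:interdef} to get $f_{\nec}\subseteq g$, so that $A\cap\bigcap_{\varphi\in T}\lb\varphi\rb_{\Frame M}\in g$ is nonempty. You instead apply the truth lemma at $\la f,\sigma\ra$ itself to $\pos\bigwedge\Sigma'$ and finish with monotonicity of $\Rhat$. This is the classical argument, and it never touches the label machinery. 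Both versions rely on the truth lemma for arbitrary ultrafilters, so neither saves anything there.

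\textbf{The successor step.} The paper asserts that the FIP is ``enough'', but none of the lemmas it cites produces a label. Lemma~\ref{lemma:interdef} only converts $f_{\nec}\subseteq g$ into $R^{-1}(g)\subseteq f$, which is the classical successor relation. The paper's $\ultraFilterCounterpart{R}$, however, is defined through labelled pairs and $\prec_l$. You observe that $f\prec_{\{W\}}g$ holds exactly when $f_{\nec}\subseteq g$, since $S^{-1}(\overline A,\varnothing)=\Rhat(A)$ and $f_{\nec}$ is closed under $\Rhat$. That is precisely what turns the ultrafilter $g$ into a world $\la g,\sigma^\smallfrown\la\{W\}\ra\ra$ of $\ultraFilterCounterpart{W}$ that is an $\ultraFilterCounterpart{R}$-successor of $\la f,\sigma\ra$. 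So your version makes explicit a step the paper leaves implicit.

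Your fallback label is unnecessary, and in fact vacuous, since $l(\{\overline{\varnothing}\})=\{W\}$. The label $\{W\}$ already qualifies: it is a proper filter with the FIP, because $W\neq\varnothing$.
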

\begin{proof}
    Consider an arbitrary $\la f,\sigma\ra\in\ultraFilterCounterpart{\Frame M}$ and let $\Sigma$ be locally possible at $\la f,\sigma\ra$ in $\ultraFilterCounterpart{\Frame M}$. We have to show that $\Sigma$ is possible. Let $\lb\Sigma\rb_{\Frame M}\coloneqq \{\lb\varphi\rb_{\Frame M} \mid \varphi\in\Sigma\}$. By combining Lemma \ref{lemma:interdef}, Lemma \ref{lemma:generatedFilterProperFip}, Lemma \ref{lemma:ultrafilterPrinciple} and Theorem \ref{truthLemmmaUltrafilterExtension} it is enough to see that $f_{\nec}\cup \lb\Sigma\rb_{\mathcal{M}}$
    has the finite intersection property. Since any finite part $T$ of $\Sigma$ has a $\ultraFilterCounterpart{R}$-successor $\la g,\sigma'\ra $ with $\ultraFilterCounterpart{\Frame M}, \la g,\sigma'\ra \Vdash \bigwedge T$, by Theorem \ref{truthLemmmaUltrafilterExtension} we have $\bigcap_{\varphi\in T}\lb\varphi \rb_{\Frame M}\in g$.  By Lemma \ref{RlTransitivity}, Lemma \ref{lemma:assuringThenSuccessors} and Lemma \ref{lemma:interdef}, we see that $f_{\nec}\subseteq g$.  Therefore, for any $A\in f_{\nec}$, we have $\varnothing\neq A\cap\bigcap_{\varphi\in T}\lb\varphi\rb\in g$. Given that $A$ and $T$ were arbitrary, we conclude that $f_{\nec}\cup\lb\Sigma\rb_{\Frame M}$ has the finite intersection property.
\end{proof}
We also have a certain saturation at the label level.
\begin{theorem}
Let $f\in\textsf{U}(W)$. If, for every finite $l'\subset l$, there exists a $g$ such that $f\prec_{l'}g$, then there exists an $h$ such that $f\prec_l h$.
\end{theorem}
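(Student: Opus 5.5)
The plan is a compactness argument via the Ultrafilter Principle (Lemma~\ref{lemma:ultrafilterPrinciple}). Put
\[
\Phi_l:=\{A,\ \Rhat(A)\mid A\subseteq W,\ \exists^{\sf fin}\{S_i\in l\}\ S^{-1}(\overline{A},\textstyle\bigcup_i\overline{S_i})\in f\}.
\]
By definition, $f\prec_l h$ holds exactly when $\Phi_l\subseteq h$. So it is enough to show that $\Phi_l$ has the FIP. Any ultrafilter $h\supseteq\Phi_l$ then witnesses $f\prec_l h$.

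To verify the FIP, I take finitely many members of $\Phi_l$. Each member comes with a set $A_k$ and a finite witnessing family $\{S^k_i\}\subseteq l$, where $k=1,\dots,n$. The union of these witnessing families, $l'$, is a finite subset of $l$. By hypothesis there is some $g$ with $f\prec_{l'}g$.

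Each $A_k$ lies in the set $B_f$ computed with respect to $l'$. Its witness is a finite subfamily of $l'$, namely $\{S^k_i\}$ itself. Since $f\prec_{l'}g$, we get $A_k,\Rhat(A_k)\in g$ for every $k$. Their finite intersection is therefore in $g$, and in particular it is nonempty.

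One caveat concerns the phrase ``finite $l'\subset l$'' in the statement. It is read as a finite subfamily of $l$, and the definition of $\prec$ only asks that $l'$ have the FIP. If $l'$ is required to be a proper filter, I replace it by the generated filter $l(l')$. Then I use Lemma~\ref{lemma:filtergeneratedbyaFip} together with the restriction property (item~\ref{itemFirstBasic} of the basic lemma) to move between $\prec_{l'}$ and $\prec_{l(l')}$.

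The only step needing care is the bookkeeping of witnesses. Membership in $\Phi_l$ is existential over finite subfamilies of $l$, and each witnessing family must be collected into a single finite $l'$. The FIP for $\Phi_l$ does not require Lemma~\ref{fip}, because the hypothesis directly supplies a $g$ containing all the chosen sets at once. The rest is routine.
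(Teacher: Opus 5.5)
Your proof is correct, and it takes a different and more complete route than the paper's.

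The paper argues by contradiction. Assuming no suitable $h$ exists, it takes, for a given $h$, a single offending $A$ with its finite witness family and places that family in a finite $l'\subseteq l$. It then gets $g$ with $f\prec_{l'}g$ from the hypothesis and notes $A,\Rhat(A)\in g$. Taken literally, this does not yet contradict anything. The ultrafilter $g$ is obtained from $l'$, which was obtained from $h$, so nothing forces $g=h$. Moreover, $g$ may fail some other set whose witnesses lie outside $l'$.

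The missing ingredient is exactly the compactness step you make explicit. The statement ``no ultrafilter contains $\Phi_l$'' must first be turned, via the Ultrafilter Principle (Lemma~\ref{lemma:ultrafilterPrinciple}), into ``finitely many members of $\Phi_l$ have empty intersection''. Only then can all witnesses be pooled into one finite $l'$. One could keep the paper's single-$A$ format by merging those finitely many sets into $C=\bigcap_k A_k$, using Lemma~\ref{fip} (whose proof pools the witnesses) and the monotonicity of $\Rhat$. Your direct pooling makes Lemma~\ref{fip} unnecessary, as you observe.

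The paper's version buys brevity but leaves the ultrafilter step to the reader. Yours is self-contained. Note also that $\Phi_l\neq\varnothing$, since $S^{-1}(\varnothing,\cdot)=W\in f$ puts $W$ in it, so the Ultrafilter Principle applies as stated.

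Your closing caveat is not needed. The relation $\prec_{l'}$ is defined for any family with the FIP, and every finite subfamily of $l$ inherits the FIP. You are right to read $\subset$ as $\subseteq$ (allowing $l'=l$ when $l$ is finite), as the paper's own proof does. With strict inclusion the statement can fail for a finite $l$ that is not a filter.
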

\begin{proof}
    Suppose not. Then, for every $h\in\textsf{U}(W)$, there is some $A\subseteq W$ and some finite choice of $S_i\in l$ such that $S^{-1}\big(\overline{A},\bigcup_i\overline{S_i}\big)\in f$ and $A\notin h$ or $\Rhat(A)\notin h$. Observe that the finite choice of $S_i\in l$ is a finite choice of $S_i\in l'$, for some finite $l'\subseteq l$. By assumption, there is some ultrafilter $g\in\textsf{U}(W)$ such that $f\prec_{l'}g$. Since $S^{-1}\big(\overline{A},\bigcup_i\overline{S_i}\big)\in f$, we have that $A,\Rhat(A)\in g$, which contradicts our hypothesis. 
\end{proof}

\section{Conclusions}

Through our example of a modally undefinable first-order frame condition we motivated the need of a Goldblatt-Thomason theorem~\cite{goldblatt-thomason} for interpretability logic. We proceeded to give a definition for ultrafilter extensions of Veltman frames and models and proved the main properties of these. The definition that we present resembles the canonical model construction~\cite{JonghVeltman:1990:ProvabilityLogicsForRelativeInterpretability,GorisBilkovaJoostenMikec:2022:JournalLabels} which came slightly unexpected since the ultrafilter extension starts from a concrete model. For this reason, and in general, it would be interesting to see how ultrafilter extensions fare under Verbrugge semantics~\cite{JoostenMasMikecVukovic:2024:OverviewVerbrugge}.

\section*{Acknowledgement} The second author has received financial support through the projects PID2023-151396OB-I00 and PID2023-149556NB-I00 of the Spanish Ministry of Science and Innovation, through 2021 SGR 00348 of the \textit{Generalitat de Catalunya} and, through the Severo Ochoa and María de Maeztu Program for Centers and Units of Excellence in R\&D (CEX2020-001084-M), the Spanish State Research Agency. We thank CERCA Programme/Generalitat de Catalunya for institutional support.

The fourth author has received financial support through the project project SERICS – Security and Rights in the CyberSpace PE0000014 within PNRR, M4C2 I.1.3, funded by the European Union - NextGenerationEU (MUR Code: 2022CY2J5S). We thank the Istituto Nazionale di Alta Matematica/INdAM group GNSAGA for institutional support.

We are very grateful for two very detailed referee reports. They greatly improved the presentation of this paper. Refereeing is an anonymous task, which is not well accounted for in the publish-or-perish culture bestowed upon us by various enthusiasts who hold a master in business administration, certainly not the \textit{quality} of refereeing. In light of this, we are therefore extremely grateful for the tremendous effort the two referees made to help improve this paper.

\end{document}